\numberwithin{equation}{section}
\newtheorem{theorem}{Theorem}[section]
\newtheorem*{theorem*}{Theorem}
\newtheorem{thm}[theorem]{Theorem}
\newtheorem{mainthm}{Theorem}
\newtheorem{lemma}[theorem]{Lemma}
\newtheorem{lem}[theorem]{Lemma}
\newtheorem{proposition}[theorem]{Proposition}
\newtheorem{corollary}[theorem]{Corollary}
\newtheorem{definition}[theorem]{Definition}
\newtheorem{example}[theorem]{Example}
\newtheorem{remark}[theorem]{Remark}
\newtheorem{defn}{Definition}[section]
\newcommand{\be}{\begin{equation}}
	\newcommand{\ee}{\end{equation}}
\newcommand{\beq}{\begin{equation*}}
	\newcommand{\eeq}{\end{equation*}}
	\newcommand{\enq}{\end{equation}}
\newcommand{\ben}{\begin{eqnarray}}
	\newcommand{\een}{\end{eqnarray}}
\newcommand{\bea}{\begin{eqnarray*}}
	\newcommand{\eea}{\end{eqnarray*}}
\newcommand{\Fc}{ {\mathcal{F}}}
\def\cH{{\mathcal H}}
\def\cE{{\mathcal E}}
\newcommand{\cD}{ {\mathcal{D}}}
\newcommand{\cL}{ {\mathcal{L}}}
\newcommand{\cP}{ {\mathcal{P}}}
\newcommand{\cR}{ {\mathcal{R}}}
\newcommand{\cM}{ {\mathcal{M}}}
\def\ker{{\mathrm{ker\,}}}
\def\Ran{{\mathrm{Ran\,}}}
\newcommand{\R}{{\mathbb{R}}}
\newcommand{\N}{{\mathbb{N}}}
\newcommand{\C}{{\mathbb{C}}}
\newcommand{\supp}{\mbox{\rm supp}}
\def\ri{{\rm i}}
 \def\dd{\, {\rm d}}
\newcommand{\pa}{\partial}
\DeclareMathOperator*{\stwo}{\sigma_{e,2}}
\DeclareMathOperator*{\sone}{\sigma_{e,1}}
\DeclareMathOperator*{\sthree}{\sigma_{e,3}}
\DeclareMathOperator*{\sfour}{\sigma_{e,4}}
\DeclareMathOperator*{\sfive}{\sigma_{e,5}}
\DeclareMathOperator*{\stwor}{\sigma^{{\rm red}}_{e,2}}
\DeclareMathOperator*{\soner}{\sigma^{{\rm red}}_{e,1}}
\DeclareMathOperator*{\sthreer}{\sigma^{{\rm red}}_{e,3}}
\DeclareMathOperator*{\sfourr}{\sigma^{{\rm red}}_{e,4}}
\DeclareMathOperator*{\sfiver}{\sigma^{{\rm red}}_{e,5}}
\DeclareMathOperator*{\coker}{coker}
\DeclareMathOperator\Hdiv{H_{div}}
\DeclareMathOperator\Hcurl{H_{curl}}
\DeclareMathOperator\sweyl{\sigma_{Weyl}}
\DeclareMathOperator\sweylr{\sigma^{{\rm red}}_{Weyl}}
\DeclareMathOperator\perm{\boldsymbol{\epsilon}}
\newcommand{\bspm}{\left(\begin{smallmatrix}}\newcommand{\espm}{\end{smallmatrix}\right)}
\newcommand{\bpm}{\begin{pmatrix}}\newcommand{\epm}{\end{pmatrix}}
\def\blem{\begin{lem}}\def\elem{\end{lem}}
\def\bthm{\begin{thm}}\def\ethm{\end{thm}}
\def\bprop{\begin{proposition}}\def\eprop{\end{proposition}}
\def\bcor{\begin{corollary}}\def\ecor{\end{corollary}}
\def\beq{\begin{equation}}\def\eeq{\end{equation}}
\def\beqq{\begin{equation*}}\def\eeqq{\end{equation*}}
\def\bal{\begin{align}}\def\eal{\end{align}}
\def\bpf{\begin{proof}}\def\epf{\end{proof}}
\def\bex{\begin{example}}\def\eex{\end{example}}
\def\brem{\begin{remark}}\def\erem{\end{remark}}
\newcommand{\norm}[1]{\left\Vert#1\right\Vert}
\newcommand{\curl}[1]{\nabla \times #1 }
\renewcommand{\div}[1]{\nabla \cdot #1 }
\newcommand{\pd}[1]{ {\partial_{x_{#1}}}}
\newcommand{\nn}{\nonumber}
\newcommand{\ba}{\begin{aligned}}
	\newcommand{\ea}{\end{aligned}}
\newcommand\bluesout{\bgroup\markoverwith{\textcolor{blue}{\rule[0.5ex]{2pt}{0.4pt}}}\ULon}
\def\C{\mathbb C}
\def\R{\mathbb R}
\def\N{\mathbb N}
\def\@fnsymbol#1{\ensuremath{\ifcase#1\or \dagger\or \ddagger\or
		\mathsection\or \mathparagraph\or \|\or **\or \dagger\dagger
		\or \ddagger\ddagger \else\@ctrerr\fi}}
\newcommand{\Addresses}{{
		\bigskip
		\footnotesize

		B.M.~Brown, \textsc{School of Computer Science, Abacws, Cardiff University, Senghennydd Road, Cardiff CF24 4AG, UK}

		\medskip

		T.~Dohnal, \textsc{Martin Luther University Halle-Wittenberg, Institute of Mathematics, Theodor Lieser Str.5, 06120 Halle, Germany}\par\nopagebreak
		\textit{E-mail address}, T.~Dohnal: \texttt{tomas.dohnal@mathematik.uni-halle.de}, ORCID: 0000-0002-0379-5967

		\medskip

		M.~Plum, \textsc{KIT Karlsruhe, Institute for Analysis, Englerstrasse 2, 76131 Karlsruhe, Germany}\par\nopagebreak
		\textit{E-mail address}, M.~Plum: \texttt{michael.plum@kit.edu}, ORCID: 0000-0001-6998-128X

		\medskip

		I.~Wood, \textsc{School of Mathematics, Statistics and Actuarial Sciences, Sibson Building, University of Kent, Canterbury, CT2 7FS, UK}\par\nopagebreak
		\textit{E-mail address}, I.~Wood: \texttt{i.wood@kent.ac.uk}, ORCID: 0000-0001-7181-7075
}}
\begin{document}
	\title {Spectrum of the Maxwell Equations for a Flat Interface between Homogeneous Dispersive Media}

	\author{Malcolm Brown\thanks{Malcolm Brown died on 14th January 2022.}, Tom\'a\v{s} Dohnal, Michael Plum, and Ian Wood}

	\maketitle
	\Addresses

	\bigskip

	\textbf{Statements and Declarations:}
	Tomas Dohnal acknowledges funding by the Deutsche Forschungsgemeinschaft (DFG, German Research Foundation), Project-ID DO1467/4-1. Michael Plum acknowledges funding by the DFG, Project-ID 258734477 - SFB 1173. For the purpose of open access, the authors have applied a CC BY public copyright licence (where permitted by UKRI, an Open Government Licence or CC BY ND public copyright licence may be used instead) to any Author Accepted Manuscript version arising.

	\medskip

	\abstract{The paper determines and classifies the spectrum of a non-self-adjoint operator pencil generated by the time-harmonic Maxwell problem with a nonlinear dependence on the frequency for the case of two homogeneous materials joined at a planar interface. We study spatially one-dimensional and two-dimensional reductions in the whole space $\R$ and $\R^2$. The dependence on the spectral parameter, i.e. the frequency, is in the dielectric function and we make no assumptions on its form. These function values determine the spectral sets. In order to allow also for non-conservative media, the dielectric function is allowed to be complex, yielding a non-self-adjoint problem. The whole spectrum consists of eigenvalues and the essential spectrum, but the various standard types of essential spectra do not coincide in all cases. The main tool for determining the essential spectra are Weyl sequences.}

	\medskip
	\noindent
	\textbf{Keywords}: spectrum, operator pencil, non-self-adjoint, Maxwell equations, material interface, surface plasmon, dispersive medium

	\section{Introduction to the problem}

	In this work  we  analyze  the spectral  properties of a time-harmonic  Maxwell pencil  with the frequency $\omega$ being the spectral parameter. We consider a composite material which  incorporates   an interface dividing  the space into two half-spaces with different material properties.
	\newline\indent
	When one of the materials is dispersive, i.e.~the dielectric constant depends on $\omega$, then the spectral problem is nonlinear  in $\omega$.  Also, when  one of the materials is non-conservative, i.e.~with a complex valued dielectric constant, for example, a metal, then the problem is non-self-adjoint.
	An interesting phenomenon in such structures is the existence of surface plasmons. These are  states localized at an interface of a metal and a dielectric and therefore, in the one-dimensional case, correspond to the existence of eigenvalues.  The simplest example is a planar interface (which without loss of generality we take to be at $x_1=0$). Here plasmons are transverse magnetic solutions which are localized in $x_1$ and  have a plane wave form in the tangential direction. In this paper  we consider both a two-dimensional problem, where the fields depend only on $(x_1,x_2)$, and a one-dimensional problem, where the fields are independent of $x_3$ and the dependence on $x_2$ is prescribed to produce a plane wave. We formulate the Maxwell spectral problem (in the second order formulation for the $E$ field) via the operator pencil $P(\omega;1)$, where
	$$P(\omega;\lambda)=A(\omega)-\lambda B(\omega),$$
	and where $A$ and $B$ are operators between Hilbert spaces. Informally, $A$ is the curl-curl operator and $B$ is the operator of multiplication by the dielectric function. As we explain below, in order to be able to define the concept of isolated eigenvalues, we introduce the extra parameter $\lambda$ in $P(\omega;\lambda)$.
	\newline\indent
	The topic of surface plasmons is well studied in the physics literature, see e.g. \cite{Zayats05,Pitarke_2007}. There are also rigorous mathematical results on eigenvalues and surface plasmons as corresponding eigenfunctions \cite{Grieser14,ARYZ16}  - with the former reference studying the quasistatic approximation and both considering a problem linear in the spectral parameter. Our aim is a description of the entire spectrum in the dispersive case (i.e.~nonlinear dependence on the spectral parameter $\omega$). Unlike the above papers we restrict ourselves here to the simplest, i.e.~planar, interface. The main contribution of the paper is the characterization and classification of the spectrum, which includes the non-trivial task of the definition of isolated eigenvalues in the nonlinear setting.
	\newline\indent
	In the existing literature there are several results on the essential spectrum of the Maxwell problem with or without interfaces. In \cite{Lassas98} the author studies  the case of a bounded domain in the presence of conductivity, which makes the problem non-self-adjoint. The resulting operator pencil is  however linear. Also \cite{ABMW19} studies the non-self-adjoint Maxwell case with a linear dependence on the spectral parameter but in the presence of interfaces. The case of a nonlinear pencil in the Maxwell problem for a cavity is studied in \cite{HP20}, where the spectral problem is a self-adjoint one. The Maxwell problem with a rational pencil (as given by the Drude model) is studied in \cite{CHJ2017,CHJ2021}, where a self-adjoint case with electric and magnetic currents is considered, the spectrum is determined and the behaviour of the resolvent and resonances studied.
	\newline\indent
	Pencil problems arise naturally in many other application areas, e.g., hydrodynamics \cite{Shkaliko96}. In addition,  \cite{ST96} studies a linear pencil problem on a finite interval (a Kamke problem) and its application to the Orr-Sommerfeld problem.  In \cite{MT13} a linear non-self-adjoint pencil with an application to the Hagen-Poiseuille flow is analyzed. Most mathematical literature on pencils is restricted to a linear or polynomial dependence on the spectral parameter, see e.g.~\cite{Markus1988}. However, also a rational dependence has been analyzed \cite{ET18,Engstr21}. We do not make any assumptions on the form of the dependence on the spectral parameter.

	As mentioned above, the material in our problem depends only on the spatial variable $x_1$ and the frequency $\omega$. Hence, the (linear) electric susceptibility in frequency space is
	$$\hat\chi:\R\times (\C\setminus S)\to\C, \hat\chi: (x_1,\omega)\mapsto \hat\chi(x_1,\omega),$$
	where $S$ is a set of (typically finitely many) singularity points of $\hat{\chi}(x_1,\cdot)$.
	In the case of a dielectric in $x_1>0$ and a metal in $x_1<0$ the following is a simple example
	\begin{equation}\label{E:chi}
		\hat{\chi}(x_1,\omega):=
		\begin{cases}
			\hat{\chi}_m(\omega) & \text{for } x_1<0, \\
			\eta & \text{for } x_1>0,
		\end{cases}
	\end{equation}
	where $\eta>0$. Two classical examples of $\hat{\chi}_m$ are the Drude model
		\beq\label{E:Drude}
		\hat{\chi}_m(\omega)=-\frac{c_D}{\omega^2+\ri\gamma\omega}
		\eeq
		with $ c_D,\gamma>0$ and the Lorentz model
		\beq\label{E:Lorentz}
		\hat{\chi}_m(\omega)=-\frac{c_L}{\omega^2+\ri\gamma\omega-\omega_*^2}
		\eeq
		with $c_L,\gamma,\omega_*>0$, see e.g. \cite{Pitarke_2007,ACL2018,CJM2023}.

	The time-harmonic ansatz $(\mathcal E, \mathcal H)(x,t)=(E,H)(x)e^{-\ri\omega t}+(\overline{E},\overline{H})(x)e^{\ri\overline{\omega} t}, \omega \in\C,$ for the electric field $\mathcal E$ and the magnetic field $\mathcal H$ reduces time dependent Maxwell's equations in $\R^3$ in the absence of free charges and free currents and with a constant permeability to
	\begin{equation}\label{E:Maxw-1stord}
		\begin{aligned}
			\nabla\times E & = \ri\omega \boldsymbol{\mu}_0 H, \\
			\nabla\times H & = -\ri\omega D, \quad D =\perm_0 (1+\hat\chi(x_1,\omega))E,\\
			\nabla \cdot D & =0, \quad \nabla \cdot H =0,
		\end{aligned}
	\end{equation}
	where $\perm_0>0$ and $\boldsymbol{\mu}_0>0$ are the electric permittivity and magnetic permeability of vacuum, respectively. Here we choose the permeability of the material equal to that of vacuum. Note that the divergence condition has an impact on the functional analytic setting and hence also on the spectral properties. On the other hand, for eigenfunctions with $\omega\neq 0$ the $H$- and $D$-components automatically satisfy $\nabla \cdot H=0=\nabla \cdot D$. In order to allow also $\omega=0$, the divergence conditions need to be imposed. While from a physical point of view the case $\omega=0$ may be irrelevant, we include it for mathematical completeness. In order to reflect the assumptions in the model, our choice of the operator domain includes the condition $\nabla \cdot D=0$. The condition  $\nabla \cdot H=0$ plays no role in our analysis as we reduce the system to a second order problem for the $E$ field.

		Also note that allowing the material permeability $\boldsymbol{\mu}$ (replacing $\boldsymbol{\mu}_0$) to depend on $\omega$ would have no effect on the analysis of the problem. However, allowing  $\boldsymbol{\mu}$ to depend on $x_1$ would change the analysis and the functional analytic setting considerably, see Remarks \ref{R:setting-mu-1d}, \ref{R:setting-mu-2d}, and \ref{R:evals-mu-2d}. In the rest of the papers we set, for the sake of brevity,
		$$\boldsymbol{\mu}_0=1.$$
		This can be achieved by rescaling $H$ and the variable $x$. This simplification has no effect on the results.

	In the second order formulation we have
	\begin{equation}\label{E:Maxw-2ndord}
		\nabla\times\nabla\times E- \omega^2\perm(x_1,\omega)E=0, \quad \nabla \cdot (\perm E)=0,
	\end{equation}
	where
	$$\perm(x_1,\omega):=\perm_0(1+\hat\chi(x_1,\omega)).$$

	We denote the $\omega$-domain of $\perm(x_1,\cdot)$ by $D(\perm)$, i.e.
	$$D(\perm):=\C\setminus S.$$
	Note that although the domain of $\omega \mapsto \omega^2 \perm(x_1,\omega^2)$ can be larger than $D(\perm)$ (since it is possible that $\omega^2 \perm(x_1,\omega^2)$ is defined at $\omega=0$ while $0\notin D(\perm)$), we have to work in the potentially smaller domain of $\perm(x_1,\cdot)$ as $\perm$ occurs in the condition $\nabla \cdot D =0$.

	We make no assumptions on the properties of $\omega \mapsto \perm(x_1,\omega)$. Of course, from a physical point of view, such a degree of generality may not be necessary but no assumptions on the form of the $\omega-$dependence are needed for the mathematical spectral analysis. However, for physically relevant materials, $\perm$ should satisfy a set of properties determined by realness, causality, and passivity. Physically, these are defined in the time domain. In the frequency domain they translate to the conditions
		$$
		\begin{aligned}
			&\perm(\cdot,-\overline{\omega}) = \overline{\perm(\cdot,\omega)} \quad \forall \omega \in \C^+:=\{\omega\in \C: \Im(\omega)>0\},\\
			&\omega\mapsto \perm(x_1,\omega) \ \text{is holomorphic in } \C_a^+:=\{\omega\in \C: \Im(\omega)>a\} \ \text{for some } a>0,\\
			& \omega\mapsto \perm(x_1,\omega) \ \text{is a Herglotz function,}
		\end{aligned}
		$$
		respectively. Herglotz functions are complex valued functions, holomorphic in $\C^+$ (implying causality already) and with values in $\{\omega\in \C: \Im(\omega)\geq 0\}$. In the time domain realness means that $\chi_1(x_1,\omega)$ is real, causality is the condition that the displacement field $\cD(t)$ depends only on the past, i.e. on $\mathcal E(s)$ with $s\leq t$, and passivity means that the $L^2$-energy of the field $(\mathcal E,\cH)$ stays below its initial value. In addition, one expects that the dispersiveness of the medium decays for ``large" frequencies, i.e. $\perm(\cdot,\omega)\to\perm_0$  for $|\omega|\to \infty$. For a discussion of the connections between these mathematical and physical conditions see \cite{CJK2017,CJM2023}.

	In the one-dimensional reduction of \eqref{E:Maxw-2ndord} we set $E(x)=e^{\ri kx_2}u(x_1)$ with $k\in \R$ fixed and $u(\cdot)$ being a suitable function of one  real variable, see Sec.~\ref{S:1D}. In the two-dimensional reduction we set $E(x)=E(x_1,x_2)$, see Sec.~\ref{S:2D}.

	We assume that the material is homogeneous in each half-space $\R^n_\pm:=\{x\in \R^n: \pm x_1>0\}$. Hence, the function $\perm (\cdot,\omega)$ is piecewise constant with a possible jump only at $x_1=0$.

	We define also the $x_1$-independent functions
	$$\perm_\pm:D(\perm)\to\C, \perm_\pm(\omega):=\perm(\pm x_1>0,\omega).$$

	Due to the discontinuity of $\perm$, solutions of system \eqref{E:Maxw-1stord} are not smooth at $x_1=0$. Nevertheless, formally, they satisfy the condition that $E_2, E_3, D_1,$ and $H$ be continuous across the interface at $x_1=0$, see e.g.~Sec.~33-3 in \cite{feynman2}. In one dimension (where the components of the functions in the operator domain lie in $H^1(\R_+)$ as well as $H^1(\R_-)$) this continuity  is in the classical sense, while in two dimensions it has to be interpreted in the trace sense, see \eqref{E:IFC} and Appendix \ref{App:traces2D}.
	Thus, in one dimension we have
	\beq\label{E:jump}
	\llbracket \psi\rrbracket :=\psi(x_1\to 0+)-\psi(x_1\to 0-)=0
	\eeq
	for $\psi=E_2, E_3, D_1, H_1, H_2,$ and $H_3$. We deduce these jump conditions from our functional analytic setting for both reductions to  spatial dimensions $n=1$ and $n=2$.

	The aim of this work is to describe the spectrum of \eqref{E:Maxw-2ndord} for the interface problem, where $\omega$ plays the role of a spectral parameter. Due to the generally nonlinear dependence of $\perm$ on $\omega$, we have to    model the problem  in terms of  an operator pencil.

	We define first the exceptional set
	$$\Omega_0:=\{\omega \in D(\perm): \omega^2\perm_+(\omega)=0 \ \text{or} \ \omega^2\perm_-(\omega)=0\}.$$
	In most physical applications $\Omega_0=\{0\}$ because typically $\perm(\omega)_\pm\neq 0$ for all $\omega \in\C$ and $\lim_{\omega\to 0}\omega^2\perm(\omega)=0$. However, from the mathematical point of view there is no need to make such an assumption, and we shall avoid it.

	In the \textbf{one-dimensional reduction} with
	\beq\label{E:ansatz1D}
	E(x)=e^{\ri kx_2} u(x_1),
	\eeq
	where $k\in \R$ is fixed, and $u:\R\to\C^3$, the equation for the profile $u$ becomes
	\beq\label{E:NL-eval}
	L_k(\omega)u:=T_k(\pa_{x_1})u-\omega^2 \perm(x_1,\omega)u=0, \ x_1 \in \R\setminus \{0\},
	\eeq
	where
	\beq\label{E:Tk}
	T_k(\pa_{x_1}):=\bspm k^2 & \ri k\pa_{x_1}& 0 \\ \ri k\pa_{x_1} & -\pa_{x_1}^2 & 0 \\ 0 & 0 & k^2-\pa_{x_1}^2\espm=\nabla_k\times\nabla_k\times, \quad \nabla_k:=(\pa_{x_1},\ri k,0)^T.
	\eeq

	In the \textbf{two-dimensional reduction} with
	\beq\label{E:ansatz2D}
	E(x)=E(x_1,x_2),
	\eeq
	where $E:\R^2\to\C^3$, the equation for $E$ is now
	\beq\label{E:NL-eval-2D}
	L(\omega)E:=T(\pa_{x_1},\pa_{x_2})E-\omega^2 \perm(x_1,\omega)E=0, \ (x_1,x_2) \in \R^2,
	\eeq
	where
	\beq\label{E:T}
	T(\pa_{x_1},\pa_{x_2}):=\bspm -\pa_{x_2}^2  & \pa_{x_1}\pa_{x_2}& 0\\ \pa_{x_1}\pa_{x_2} & -\pa_{x_1}^2 & 0 \\ 0 & 0 & -\pa_{x_1}^2-\pa_{x_2}^2\espm= \nabla \times\nabla \times, \quad \nabla :=(\pa_{x_1},\pa_{x_2},0)^T.
	\eeq

	Due to the block structure in \eqref{E:Tk} and  and \eqref{E:T} all fields $E$ that are relevant for our spectral considerations are of a polarized form, i.e. $E=(E_1,E_2,0)$ or $E=(0,0,E_3)$. The corresponding $H$ field can be constructed using $\curl E = \ri \omega H$. The former case then corresponds to the TM polarization with $H=(0,0,H_3)$ and in the latter case one has $H=(H_1,H_2,0)$.

	The rest of the paper is organized as follows. In Section \ref{S:gen-pencil} we define the various types of spectrum for operator pencils $P(\omega;\lambda)=A(\omega)-\lambda B(\omega)$. The main results of the paper are summarized in Section \ref{S:main-res}. The results in the one-dimensional setting are proved in Section \ref{S:1D} while the two-dimensional case is analyzed in Section \ref{S:2D}. In Section \ref{S:t-dep}  we discuss implications for the time dependent problem and in Section \ref{S:concl} we provide a summary and discuss open problems. Finally, the appendices explain our choice of interface conditions in the functional analytic $L^2$-based setting.

	Note that throughout the paper we use the definition of the complex square root $\sqrt{z}$, $z\in \C$  as the unique solution $a$ of $a^2=z$ with $\text{arg}(a)\in (-\pi/2,\pi/2]$.

	\section{Spectrum of Operator Pencils}\label{S:gen-pencil}

	Much of this section is a straightforward generalisation of well-known concepts for the spectrum of operator pencils, see for example \cite{MoellerPivo}, and Banach space operators, see \cite{EE18}. However, most operator pencils found in the literature depend polynomially on the spectral parameter $\omega$. We have already seen in \eqref{E:Drude}, \eqref{E:Lorentz} that this is not the case in physically relevant models here, so we  do not wish to make this restriction. Therefore,
	we start by providing a general framework for the spectral analysis of operator pencils relevant for our application.
	The aim is to study the spectrum of an $\omega$-dependent operator pencil $A(\omega)- B(\omega)$, where
		$$
		\begin{aligned}
			&A(\omega): H \supset \cD_\omega\to \cR \ \text{is closed},\\
			&B(\omega): H \supset \cD_\omega \to \cR \ \text{is bounded}
		\end{aligned}
		$$
		with some Hilbert spaces $H$ and $\cR$ and where the spectral parameter $\omega$ lies in a set $\Omega \subset \C$. The index $\omega$ in $\cD_\omega$ means that the domain of $A(\omega)$ and $B(\omega)$ can depend on $\omega$.

	However, as we explain below, to study  we need to consider also the 2-parameter pencil
	$$\cP :=(P(\omega;\lambda))_{\substack{\omega\in \Omega\\ \lambda\in \C}},\; \ P(\omega;\lambda)=A(\omega)-\lambda B(\omega).$$

	The spectral parameter in our analysis is $\omega$ with $\lambda$ fixed at $\lambda=1$ for most of our discussions. However, when we define the essential spectrum $\sfive$ (see Definition \ref{def1}) and the concept of an isolated eigenvalue, we need to consider $\lambda$ as an auxiliary spectral parameter, see \eqref{E:sp-def-1D}. To the best of our knowledge, the introduction of the second auxiliary spectral parameter is new.

	The \textbf{resolvent set}  of  the pencil $\cP$ is defined as
	$$\rho(\cP):=\{\omega\in \Omega: P(\omega;1):\cD_\omega\to \cR \text{ is bijective with a bounded inverse}\}.$$
	\brem\label{R:equiv-est}
	The boundedness in the definition of $\rho(\cP)$ can be equivalently formulated as
	\beq\label{E:bded-1}
	\exists c_1>0: \ \|u\|_H \leq c_1\|P(\omega;1)u\|_\cR \ \forall u \in \cD_\omega
	\eeq
	and
	\beq\label{E:bded-2}
	\exists c_2>0: \ \|u\|_{\cD_\omega} \leq c_2\|P(\omega;1)u\|_\cR \ \forall u \in \cD_\omega,
	\eeq
	where
	\beq\label{E:graph-norm}
	\|u\|_{\cD_\omega}:= \sqrt{\langle u,u\rangle_{\cD_\omega}}, \ \langle u,v\rangle_{\cD_\omega}:=\langle u,v\rangle_H + \langle A(\omega)u,A(\omega)v\rangle_{\cR}
	\eeq
	denotes the graph norm corresponding to $A(\omega)$. Note that due to the closedness of $A(\omega)$ one easily obtains that $(\cD_\omega,\langle\cdot,\cdot\rangle_{\cD_\omega})$ is a Hilbert space. Another trivial observation is that $A:\cD_\omega\to\cR$ is bounded due to the definition of the norm in $\cD_\omega$.

	The implication \eqref{E:bded-2} $\Rightarrow$ \eqref{E:bded-1} is trivial. For the reverse direction note that
	$$
	\begin{aligned}
		\|A(\omega)u\|_{\cR}&=\|P(\omega;1)u+B(\omega)u\|_{\cR}\leq \|P(\omega;1)u\|_{\cR}+\|B(\omega)u\|_{\cR}\\
		& \leq \|P(\omega;1)u\|_{\cR}+c\|u\|_{H}\leq c\|P(\omega;1)u\|_{\cR},
	\end{aligned}
	$$
	where the generic constant $c$ changes in each step and where \eqref{E:bded-1} was used in the last step.
	\erem

	Following \cite{MoellerPivo}, we now introduce and discuss the \textbf{spectrum} of $\cP$ defined by
	\beq\label{E:Pspec}
	\sigma(\cP):=\Omega\setminus \rho(\cP)=\{\omega\in \Omega: 0\in \sigma(P(\omega;1))\},
	\eeq
	where $\sigma(P(\omega;1))$ is the spectrum (defined in the standard sense) of the operator $P(\omega;1)$ at a
	fixed   $\omega$.

	Next we introduce the concept of the \textbf{point spectrum} defined by
	$$\sigma_p(\cP):=\{\omega\in \Omega: \exists u \in \cD_\omega\setminus \{0\}: P(\omega;1)u=0 \}.$$
	Elements of $\sigma_p(\cP)$ are called eigenvalues of $\cP$.

	As  preparation for the definition of eigenvalues of \textbf{finite and infinite algebraic multiplicity} of the pencil $\cP$, we define these properties for the second eigenvalue parameter $\lambda$. The algebraic multiplicity of $\lambda$ as an eigenvalue of $P(\omega;\cdot)$ is called infinite if its geometric multiplicity~$\dim\ker(P(\omega;\lambda))$ is infinite or there exists a sequence $(u_k)_{k\in\N_0}$ of linearly independent elements $u_k\in \cD_{\omega}$ such that $(A(\omega)-\lambda B(\omega))u_{k+1}= B(\omega)u_k$ for all $k\in \N_0$ with $u_0\in \ker(A(\omega)-\lambda B(\omega))\setminus \{0\}$. Otherwise the algebraic multiplicity is called finite.

	The eigenvalue $\lambda$ of $P(\omega;\cdot)$ is called algebraically simple if it is geometrically simple and there is no solution $u\in \cD_{\omega}$ of
	\beq\label{E:gen-evec-eq}
	(A(\omega)-\lambda B(\omega))u= B(\omega)v,
	\eeq
	where $v\in \ker(A(\omega)-\lambda B(\omega))\setminus \{0\}$ and such that $u$ and $v$ are linearly independent.

	Note that we do not define the algebraic multiplicity in general as a number since we do not use it. For such a definition (compatible with the above) see Section 1.1 in \cite{MoellerPivo}.

	For the pencil $\cP$ we subdivide the point spectrum $\sigma_p(\cP)$ as
	\begin{align*}
		&\sigma_p^{(<\infty)}(\cP):=\{\omega\in \sigma_p(\cP): \text{the algebraic multiplicity of } 1 \text{ as an eigenvalue of } P(\omega;\cdot) \text{ is finite}\},\\
		&\sigma_p^{(\infty)}(\cP):=\{\omega\in \sigma_p(\cP): \text{the algebraic multiplicity of } 1 \text{ as an eigenvalue of } P(\omega;\cdot) \text{ is infinite}\},
	\end{align*}
	and we call an eigenvalue $\omega \in \sigma_p(\cP)$ \textbf{algebraically simple} if $\lambda=1$ is an algebraically simple eigenvalue of $P(\omega;\cdot)$.

	A sequence $(u^{(n)})\subset \cD_\omega$ is called a   \textbf{Weyl sequence at $\omega$} if
	$$\|u^{(n)}\|_{H}=1 \ \forall n\in\N, u^{(n)} \rightharpoonup 0 \text{ in } H, \text{ and } \|P(\omega;1)u^{(n)}\|_{\cR}\to 0 \ (n\to\infty).$$

	The \textbf{Weyl spectrum} is
	$$\sweyl(\cP):=\{\omega \in \Omega: \text{ a Weyl sequence at $\omega$ exists}\}.$$

	There are several differing notions  of  \textbf{essential spectrum}   and we now introduce them by adapting the corresponding definitions 
	from \cite[Ch.\ I, \S 4]{EE18}  to our present needs.
	\begin{defn}\label{def1}
		The essential spectra $\sone(\cP), \stwo(\cP),\sthree(\cP),\sfour(\cP)$, and $\sfive(\cP)$ are defined as follows.
		\begin{enumerate}
			\item $\omega\in \sone(\cP)$ if $P(\omega;1)$ is not  semi-Fredholm (an operator is semi-Fredholm if its range is closed and its kernel or its cokernel is finite-dimensional);
			\item $\omega\in\stwo(\cP)$ if $P(\omega;1)$ is not in the class of semi-Fredholm operators with finite-dimensional kernel;
			\item $\omega\in \sthree(\cP)$ if $P(\omega;1)$ is not in the class of Fredholm operators with finite-dimensional kernel and cokernel;
			\item $\omega\in \sfour(\cP)$ if $P(\omega;1)$ is not  Fredholm with index zero, where $\operatorname{ind}P(\omega;1)=\dim\ker P(\omega;1)-\dim\coker P(\omega;1)$;
			\item $\omega\in \sfive(\cP)$ if $1\notin\Delta(P(\omega;\cdot))$. Here, $\Delta(P(\omega;\cdot))\subseteq\C$ consists of all connected components of $\{\lambda \in \C: P(\omega,\lambda) \text{ is semi-Fredholm}\}$ that contain a point in the resolvent set of $P(\omega;\cdot)$.
		\end{enumerate}
	\end{defn}

	\brem\label{rem:essinc}
	Note that $\sone\subset \stwo\subset \sthree\subset \sfour\subset \sfive$ holds in general, see \cite[Ch.\ IX]{EE18}. For the case of a self-adjoint operator all the above definitions of the essential spectrum coincide, see \cite[Ch.\ IX, Thm.\ 1.6]{EE18}.
	\erem

	\begin{remark}\label{rem:essential}
		When $H$ and $\cR$ are separable, infinite-dimensional Hilbert spaces,
		the statement $\omega\in \stwo(\cP)$ is equivalent to $\omega\in \sweyl(\cP)$.
		This follows from \cite[Ch.\ IX, Thm.\ 1.3]{EE18}, which covers the case $H_1=H_2$. However, by using a straightforward argument involving the isomorphism between $H_1$ and $H_2$ the result can be correspondingly extended.
	\end{remark}
	The closedness of $\Ran(P(\omega;1))$ plays an important role in describing the essential spectrum. We will make use of the following lemma.
	\blem\label{L:ran_not-closed}
	Assume there exists a Weyl sequence $(u_n)\subset V(\omega):=\cD_\omega\cap \ker(P(\omega;1))^\perp$ with the orthogonal complement understood in $H$. Then $\Ran(P(\omega;1))$ is not closed in $\mathcal{R}$.
	\elem
	\bpf
	To simplify the notation, let us set $\tilde{P}:=P(\omega;1)$.
	$V(\omega)$ is a closed subspace of $\cD_\omega$ because for every sequence $(v_n)\subset V(\omega)$ with $v_n\to v$ in $\cD_\omega$ we also have $v_n\to v$ in $H$ by \eqref{E:graph-norm}. Hence, for each $w\in \ker(\tilde{P})$ we have
	$$0=\langle v_n, w\rangle_H\to \langle v, w\rangle_H,$$
	implying $v\in \ker(\tilde{P})^\perp$ and thus $v\in V(\omega)$. We also conclude that $V(\omega)$ is a Hilbert space (with the inner product of $\cD_\omega$).
	\break
	\indent Next we show that $\tilde{P}|_{V(\omega)}:V(\omega)\to\Ran(\tilde{P}) $ is bijective and bounded. The injectivity follows because $\tilde{P}v=0$ implies that $v\in\ker(\tilde{P}) \cap \ker(\tilde{P})^\perp$ and hence $v=0.$ For the surjectivity pick $r\in \Ran(\tilde{P})$ and $w\in \cD_\omega$ such that $\tilde{P}w=r$. Next, we define $\pi:H\to\ker(\tilde{P})$ as the orthogonal projection. Indeed, $\ker(\tilde{P})$ is closed with respect to the $H$-norm as we show next. Let $(\varphi_n)$ denote a sequence in the kernel which converges to some $\varphi \in
	H$ w.r.t.~the $H$-norm. In particular, $(\varphi_n)$ is a sequence in the domain
	$\cD_\omega$ of $\tilde{P}$ and $\tilde{P}(\varphi_n) = 0$ for all $n$,
	whence the sequence $(\tilde{P}(\varphi_n))$ converges to $0$. Since $\tilde{P}$ is closed (with
	domain $\cD_\omega$ in the surrounding Hilbert space $H$), we get $\varphi \in \cD_\omega$
	and $\tilde{P}(\varphi) = 0$, i.e. $\varphi$ is in the kernel. See also Problem 5.9 in Chapter 3, \S 5 Sec 2 of \cite{Kato}.\\
	\indent
	Let us now set $u:= \pi w$. As $u \in \ker(\tilde{P})\subset \cD_\omega$, we get
	$$v:=w-u\in \cD_\omega \cap \ker(\tilde{P})^\perp =V(\omega)$$
	and $\tilde{P}v = \tilde{P}w-\tilde{P}u=\tilde{P}w=r.$\\
	\indent
	Finally, the boundedness of  $\tilde{P}|_{V(\omega)}:V(\omega)\to\Ran(\tilde{P}) $  is trivial since  $\tilde{P}:\cD_\omega\to\cR$ is bounded.\\
	\indent If $\Ran(\tilde{P})$ was closed, then the inverse  $\left(\tilde{P}|_{V(\omega)}\right)^{-1}:\Ran(\tilde{P})\to V(\omega)$ would be a bounded bijective operator between two Hilbert spaces by the open mapping theorem (more precisely its formulation as the bounded inverse theorem). This contradicts the assumption of existence of a Weyl sequence in $V(\omega)$ because $1=\|u_n\|_H\leq \|u_n\|_{\cD_\omega}\leq c\|\tilde{P}u_n\|_\cR\to 0$.
	\epf

	Finally, we define the \textbf{discrete spectrum}. This turns out to be slightly more complicated  than the procedure we have used in the above spectral definitions. If $\cD_\omega\not\subset \cR$ (as is the case in our applications), then it is not possible
	to define the notion of an isolated  eigenvalue $\omega$ of $\cP$ via the notion of $0$  being isolated as an eigenvalue of $P(\omega;1)$. This is because for $\mu \neq 0$ the operator $P(\omega;1)-\mu I$ does not map $\cD_{\omega}$ into $\mathcal{R}$ as $Iu=u\not\in \mathcal{R}$ in general. Hence, for the discrete spectrum $\sigma_d$ we use the following  definition
	\beq\label{E:sp-def-1D}
	\omega \in \sigma_d(\cP)  \ :\Leftrightarrow \ 1\in \sigma_d(P(\omega;\cdot)),
	\eeq
	i.e. $\lambda=1$ is an isolated eigenvalue of finite algebraic multiplicity of the standard generalized eigenvalue problem $A(\omega)u=\lambda B(\omega)u$ (with $\omega\in \Omega$ fixed).
	This formulation is suitable for defining isolated eigenvalues  because in this case we have $P(\omega;\lambda): \cD_{\omega}\to \mathcal{R}$ for any $\lambda\in \C$.

	\brem\ \label{R:disc-sfive} An important motivation for introducing $\sfive(\cP)$ in this way is that
		it allows the spectrum $\sigma(\cP)$ to be written as a disjoint union $$\sigma(\cP)=\sigma_d(\cP)~ \dot\cup~\sfive(\cP).$$
	This result follows from results by Hundertmark and Lee in \cite{HL07}, as discussed in \cite[Page 460]{EE18}. Note that this useful relation would not be available if we defined the isolatedness of eigenvalues (and hence $\sigma_d$) based on properties of the spectrum in the $\omega$-plane rather than in the $\lambda$-plane.
	\erem

	\brem
	The definition of isolatedness of $\omega_0\in \sigma_d(\cP)$ via the isolatedness of $\lambda=1$ in the spectrum of the linear operator pencil $P(\omega;\cdot)$ is also beneficial for applications in functional analytic arguments. For example, in bifurcation problems for nonlinear equations $(A(\omega) -B(\omega))u=f(u)$ (see e.g. \cite{DR2021,DR2022,DH2024}) the closedness of the range of $L_0:=A(\omega_0)-B(\omega_0)$ is crucial when looking for a solution of $(A(\omega_0)-B(\omega_0))u=b\in \ker(L_0^*)^\perp$. This closedness is guaranteed if $\lambda=1$ is a simple isolated eigenvalue \cite[Theorem IV.5.28]{Kato} and not if $\omega_0$ is isolated from other points $\omega$ in $\sigma(\cP$).

	Note that, with our definition of the discrete spectrum, $\omega_0\in \sigma_d(\cP)$ does not imply  that $\omega_0$ is isolated from other points in $\sigma(\cP)$.
	\erem

	\section{Main Results}\label{S:main-res}
	Before formulating the results rigorously, we give a brief overview and provide some intuition as to why they hold. In the one-dimensional case the interface is expected to cause localization, i.e.~the existence of eigenfunctions. With the material being homogeneous in each half space the fundamental system of the ODE problem in $\R_+$ and $\R_-$ can be found explicitly and the eigenfunctions are determined by forcing the respective decaying solutions to match via the interface conditions. These turn out to reduce to a single equation relating $\perm_+(\omega)$ and $\perm_-(\omega)$, see equation \eqref{E:ev.cond-1D}, which is known in the physics literature and plays a central role in determining eigenvalues. In addition, the problem supports radiation modes (often called non-normalized eigenfunctions in the physics literature), i.e. non-localized solutions which resemble plane-waves far away from the interface and travel to $\pm \infty$. In the spectral analysis these are the building blocks of our Weyl sequences. The corresponding frequencies $\omega$ then constitute the Weyl spectrum.
	\newline\indent
	Because in the two-dimensional case the material is homogeneous in the $x_2$-direction, full localization is not expected. Hence, we should get empty discrete spectrum. Radiation modes come in two forms in 2D, namely plane-wave-like solutions traveling to $x_1\to \infty$ or $x_1\to -\infty$ and solutions which are localized in the $x_1$-direction at the interface and have a plane-wave dependence on $x_2$. The latter ones then propagate along the interface to $x_2\to \pm \infty$ and their existence is again determined by the same relation between $\perm_+(\omega)$ and $\perm_-(\omega)$ as in 1D.
	\newline\indent
	In both 1D and 2D the infinite dimensional kernel of the curl operator could generate eigenvalues of infinite multiplicity if $\omega^2\perm_+(\omega)=0$ or $\omega^2\perm_-(\omega)=0$ with the corresponding eigenfunctions being gradients of smooth functions supported in the respective half-space. However, as we incorporate the divergence condition in the domain of $\cP(\omega)$, only zeros of $\perm_+$ and $\perm_-$ (at which the divergence condition is inactive) can generate such eigenvalues.

	\subsection{Main Results in One Dimension}\label{S:main-res-1D}
	For the problem in one dimension, 
	we consider for a fixed wavenumber
	$k\in \R$ the $\omega$- and $\lambda$-dependent operator pencil
	$$\cP=(P(\omega;\lambda))_{\substack{\omega\in \Omega\\ \lambda\in \C}}:=\cL_k:=(L_k(\omega;\lambda))_{\substack{\omega\in D(\perm)\\ \lambda\in \C}},$$
	where
	$$L_k(\omega;\lambda):= A_k(\omega)-\lambda B(\omega), \ A_k(\omega)u:=\nabla_k \times \nabla_k \times u,\  B(\omega)u:=\omega^2\perm(x_1,\omega)u, \ \nabla_k:=(\pa_{x_1},\ri k, 0)^T.$$

	In order to simplify the notation, we also define
	$$L_k(\omega):= L_k(\omega;1).$$

	The choice of the spaces is detailed below and motivated in Sec.~\ref{S:1D}. We choose
	\beq\label{E:Dkomega}
	\begin{aligned}
		H:=&L^2(\R,\C^3),\\
		\cD_{\omega}:=\cD_{k,\omega}:=&\{u\in L^2(\R,\C^3): \ \nabla_k\times u, \nabla_k\times \nabla_k\times u \in L^2(\R,\C^3),  \nabla_k\cdot (\perm(\omega) u) =0\ \text{distributionally}\}
	\end{aligned}
	\eeq
	and
	\beq\label{E:Rk}
	\cR:=\cR_k:= \{f\in L^2(\R,\C^3): \nabla_k\cdot f=f_1'+\ri k f_2 = 0 \ \text{distributionally}\}
	\eeq
	equipped with the $L^2$-inner product.
	The distributional divergence $\nabla_k\cdot f$ is defined in Appendix \ref{App:interf-1D}.

	Note that $B(\omega):H\supset \cD_{k,\omega} \to \cR_k$ is bounded and  $A_k(\omega):H\supset \cD_{k,\omega} \to \cR_k$ is closed since it is bounded as an operator from $\cD_{k,\omega}$ to $\cR_k$ (with the convergence in $\cD_{k,\omega}$ with respect to the graph norm). This argument uses the fact that $\cD_{k,\omega}$ is a Hilbert space, see Lemma \ref{L:HS-1D}. Hence, unlike in the general setting in Sec.~\ref{S:gen-pencil}, we show first the completeness of $\cD_{k,\omega}$ and this then implies the closedness of $A_k(\omega)$.


	The following equation plays a central role in our analysis
	\beq\label{E:ev.cond-1D}
	k^2(\perm_+(\omega)+\perm_-(\omega))=\omega^2\perm_+(\omega)\perm_-(\omega).
	\eeq
	Before summarising our main results, we introduce some notation. Recall that
	$$\Omega_0=\{\omega \in D(\perm): \omega^2\perm_+(\omega)=0 \ \text{or} \ \omega^2\perm_-(\omega)=0\}.$$
	We next introduce the three sets which are sufficient to describe the spectrum outside $\Omega_0$:
	\beq\label{E:Mkpm}
	\begin{split}
		M^{(k)}_\pm:= &~\{ \omega \in  D(\perm)\setminus\Omega_0: \omega^2\perm_\pm (\omega)\in [k^2,\infty)\}, \ k\neq 0,\\
		M^{(0)}_\pm:= &~\{ \omega \in  D(\perm)\setminus\Omega_0: \omega^2\perm_\pm (\omega)\in (0,\infty)\},
	\end{split}
	\eeq
	\beq
	N^{(k)}:= ~\{ \omega \in  D(\perm)\setminus \Omega_0: \omega^2\perm_+(\omega),\omega^2\perm_-(\omega)\notin [k^2,\infty) \text{ and } \eqref{E:ev.cond-1D} \text{ holds}  \}, k\in \R. \label{E:Nk}
	\eeq
	Note that typically the equations $\omega^2\perm_\pm (\omega)=a$ with $a\geq k^2$ have a discrete set of solutions which depend continuously on $a$. Then the sets $M_\pm^{(k)}$ are continuous curves in $\C$. Clearly, $M_\pm^{(k_1)}\subset M_\pm^{(k_2)}$ for $k_1>k_2$. On the other hand, the set $N^{(k)}$ is typically a discrete set. For the physically relevant case of rational functions $\perm_\pm(\omega)$, see e.g. \eqref{E:Drude}, \eqref{E:Lorentz}, these discrete sets are finite.

	For our first result, we  restrict our attention to the set $D(\perm)\setminus \Omega_0$. We define the reduced sets
	\beq\label{E:red-sets}
	\sigma^{{\rm red}}(\cL_k):=\sigma(\cL_k) \setminus \Omega_0, \quad \rho^{{\rm red}}(\cL_k):=\rho(\cL_k) \setminus \Omega_0,
	\eeq
	and similarly, we define the reduced version of all parts of the spectrum as their intersection with the complement of $\Omega_0$.
	We can now formulate our main result for the reduced spectrum.
	\begin{mainthm}[One dimension; reduced spectrum]\label{T:main1D}
		Let $k\in\R$.\\
		\noindent\textbf{Spectrum}:
		\beq\label{E:sp1D}
		\sigma^{{\rm red}}(\cL_k)= (M^{(k)}_+\cup M^{(k)}_-)\ \dot{\cup}\ N^{(k)},
		\eeq
		where $\dot{\cup}$ denotes the disjoint union.\\
		\medskip
		\noindent\textbf{Point and discrete spectrum}:
		\beq\label{E:pt-sp1D}
		\sigma^{{\rm red}}_p(\cL_k)=\sigma^{(<\infty)\ {\rm red}}_p(\cL_k)= N^{(k)}, \qquad N^{(0)}=\emptyset.
		\eeq
		All eigenvalues in $\sigma^{{\rm red}}_p(\cL_k)$ are algebraically and geometrically simple. Moreover, also
		\beq\label{E:disc-sp1D}
		\sigma^{{\rm red}}_d(\cL_k) = N^{(k)}.
		\eeq
		\noindent\textbf{Essential and Weyl spectrum}:
		\beq\label{E:weyl-sp1D}
		\sigma^{{\rm red}}_{e,j}(\cL_k)=\sweylr(\cL_k)= M^{(k)}_+\cup M^{(k)}_-, \ j=1,\dots,5.
		\eeq
	\end{mainthm}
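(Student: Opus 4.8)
The plan is to work entirely with the explicit ODE structure in each half-line $\R_\pm$, exploiting homogeneity of the material there. First I would analyze the fundamental system of $L_k(\omega)u=0$ on $\R_+$ and $\R_-$ separately. Writing $T_k(\pa_{x_1})u = W_\pm(\omega)u$ componentwise and using the divergence constraint $\nabla_k\cdot(\Wt(\omega)u)=0$ from \eqref{E:Dkomega}, one reduces the system to scalar second-order constant-coefficient ODEs. The decaying (in $L^2(\R_\pm)$) solutions involve the exponents $\sqrt{k^2-W_\pm(\omega)}$ (and, in the relevant components, also the "longitudinal" behaviour tied to $W_\pm(\omega)$ itself via the divergence condition). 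The key dichotomy is: if $W_\pm(\omega)\in[k^2,\infty)$ then $\sqrt{k^2-W_\pm(\omega)}$ is purely imaginary, so on that side solutions are oscillatory (non-$L^2$), whereas if $W_\pm(\omega)\notin[k^2,\infty)$ the exponent has positive real part and there is a genuine exponentially decaying solution. This is exactly the split defining $M^{(k)}_\pm$ versus the complement.

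Second, for the \textbf{point/discrete spectrum}: outside $\Omega_0$ and outside $M^{(k)}_+\cup M^{(k)}_-$, both half-lines admit exponentially decaying solution spaces; matching them across $x_1=0$ via the interface/jump conditions \eqref{E:jump} encoded in $\cD_{k,\omega}$ gives a small finite-dimensional linear system whose solvability condition is precisely \eqref{E:ev.cond-1D}. I expect the transverse-magnetic (in-plane) block to contribute the plasmon condition \eqref{E:ev.cond-1D}, while the transverse-electric ($E_3$) block forces $E_3\equiv 0$ unless one is in $M^{(k)}_\pm$. This yields $\sigma_p^{\rm red}(\cL_k)=N^{(k)}$; the solution being one-dimensional gives geometric simplicity, and checking that \eqref{E:gen-evec-eq} has no solution (a short computation with the generalized eigenvector equation, using that $B(\omega)$ is multiplication by $W(x_1,\omega)$ which is nonzero on $N^{(k)}$) gives algebraic simplicity, hence $\sigma_p^{(<\infty)\,\rm red}=N^{(k)}$ and, via Remark \ref{R:disc-sfive} together with $N^{(k)}$ being disjoint from $\sfive$, also $\sigma_d^{\rm red}=N^{(k)}$. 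The case $k=0$: here \eqref{E:ev.cond-1D} becomes $W_+W_-=0$, which forces $\omega\in\Omega_0$, so $N^{(0)}=\emptyset$. I would also verify $N^{(k)}\cap(M^{(k)}_+\cup M^{(k)}_-)=\emptyset$ directly from the definitions to justify the disjointness in \eqref{E:sp1D}, and that $\sigma^{\rm red}=\sigma_p^{\rm red}\,\dot\cup\,\sigma_{e,j}^{\rm red}$ forces \eqref{E:sp1D} once the other two pieces are known.

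Third, for the \textbf{essential/Weyl spectrum}: by Remark \ref{rem:essential} it suffices (for $\stwo$) to construct Weyl sequences, and by Remark \ref{rem:essinc} the inclusions $\sone\subset\cdots\subset\sfive$ mean I must (a) build a Weyl sequence at every $\omega\in M^{(k)}_+\cup M^{(k)}_-$ — in fact one lying in $V(\omega)=\cD_\omega\cap\ker(P(\omega;1))^\perp$ so that Lemma \ref{L:ran_not-closed} gives non-closed range and hence $\omega\in\sone$ — and (b) show that for $\omega\notin M^{(k)}_+\cup M^{(k)}_-$ (and still in $D(\Wt)\setminus\Omega_0$) the operator $L_k(\omega)$ is Fredholm of index zero with $1\in\Delta(P(\omega;\cdot))$, so $\omega\notin\sfive$. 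For (a): when $W_\pm(\omega)\in[k^2,\infty)$ the oscillatory fundamental solution on the corresponding half-line, cut off by a slowly-spreading bump $\chi(x_1/n)$ and translated/modulated so as to be weakly null, is an approximate null sequence; one checks $\|L_k(\omega)u^{(n)}\|_{\cR}\to 0$ because the cutoff error is $O(1/n)$, normalizes in $L^2$, and projects off $\ker(P(\omega;1))$ (finite-dimensional, so this does not spoil the Weyl property). For (b): outside $M^{(k)}_+\cup M^{(k)}_-$ and $\Omega_0$, both half-lines are in the "decaying exponent" regime, so one constructs a parametrix / explicit resolvent via the half-line Green's functions glued through the interface conditions, with the plasmon determinant \eqref{E:ev.cond-1D} vanishing only on the discrete set $N^{(k)}$; away from $N^{(k)}$ this gives a bounded two-sided inverse (so $\omega\in\rho(\cL_k)$, hence not in any $\sigma_{e,j}$), and at points of $N^{(k)}$ one gets a finite-rank perturbation of an invertible operator, hence Fredholm index zero, and the analytic family $\lambda\mapsto P(\omega;\lambda)$ connects to the resolvent set, giving $1\in\Delta(P(\omega;\cdot))$.

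The main obstacle I anticipate is step (b): producing a genuine bounded inverse (or parametrix) for $L_k(\omega):\cD_{k,\omega}\to\cR_k$ including a uniform estimate in the graph norm \eqref{E:graph-norm}, while correctly handling the divergence-constrained spaces \eqref{E:Dkomega}–\eqref{E:Rk} and the degenerate/longitudinal behaviour governed by $\Wt_\pm$ (as opposed to $W_\pm$) near, but outside, $\Omega_0$. The bookkeeping of which components decay at which rate, and verifying that the interface conditions \eqref{E:jump} are exactly the ones implied by membership in $\cD_{k,\omega}$ (this is what Appendix \ref{App:interf-1D} is for), is where the care is needed; the Weyl-sequence construction in (a), by contrast, should be routine once the fundamental systems are in hand.
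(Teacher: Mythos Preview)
Your proposal is correct and follows essentially the same route as the paper: explicit half-line ODE analysis to identify the eigenvalues (Proposition~\ref{T:pt-spec-1D}), variation-of-parameters/Green's function construction for the resolvent outside $\cM^{(k)}$ (Proposition~\ref{T:resolv-1D}), Weyl sequences on $M^{(k)}_\pm$ (Proposition~\ref{L:Weyl_sp_Tk}), and the decomposition $\sigma=\sigma_d\,\dot\cup\,\sfive$ from Remark~\ref{R:disc-sfive} to tie things together (Proposition~\ref{P:spec-compos}).

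A few small differences worth noting. First, the paper's Weyl sequences exploit the block structure (Remark~\ref{R:block1D}) and live entirely in the third component, so $L_k(\omega)$ acts as the scalar operator $k^2-\pa_{x_1}^2-W$; this bypasses the divergence constraint entirely and is simpler than what you sketch. Since $\sigma_p^{\rm red}\cap M^{(k)}_\pm=\emptyset$, the kernel is trivial on $M^{(k)}_\pm$ and your projection onto $\ker^\perp$ is unnecessary there; the paper instead gets $\stwor\subset\soner$ in one line from ``kernel finite-dimensional $\Rightarrow$ if not in $\stwo$-complement then not semi-Fredholm''. Second, your logical order for the discrete spectrum is inverted relative to the paper: you propose showing $N^{(k)}\cap\sfive=\emptyset$ (via Fredholm index zero plus $1\in\Delta$) and then deducing $\sigma_d^{\rm red}=N^{(k)}$ from Remark~\ref{R:disc-sfive}; the paper instead proves $\sigma_d^{\rm red}=N^{(k)}$ directly (Proposition~\ref{T:disc_sp_1D}) by showing that for $\lambda$ near but not equal to $1$ the rescaled problem with $\lambda W$ in place of $W$ lands in the already-established resolvent set, and only then reads off $\sfiver=M^{(k)}_+\cup M^{(k)}_-$. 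Your ``finite-rank perturbation of an invertible operator'' phrasing is loose (note $B(\omega)$ is not finite rank), but your ``analytic family connects to the resolvent set'' is exactly the paper's argument. Third, be careful with the sentence ``$\sigma^{\rm red}=\sigma_p^{\rm red}\,\dot\cup\,\sigma_{e,j}^{\rm red}$ forces \eqref{E:sp1D}'': this disjointness is not a general fact and must be deduced here from $\sigma_p^{\rm red}=\sigma_d^{\rm red}$ together with $\sigma_d\cap\sfive=\emptyset$.
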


	\brem
	\begin{enumerate}
		\item[1.] The disjointness in \eqref{E:sp1D} follows directly from the definition of the sets $M_\pm^{(k)}$ and $N^{(k)}$.

		\item[2.] Note that for $\omega \notin \Omega_0$ satisfying \eqref{E:ev.cond-1D}, we have $\perm_+(\omega)+\perm_-(\omega)\neq 0$ and the condition \eqref{E:ev.cond-1D} for $\omega$ to be an eigenvalue is equivalent to
		\beq
		\label{E:om-ev-cond-neces}
		k^2=\omega^2\frac{\perm_+(\omega)\perm_-(\omega)}{\perm_+(\omega)+\perm_-(\omega)},
		\eeq
		which is a well known condition for the existence of a plasmon, see e.g. equation (2.13) in \cite{Pitarke_2007}.
		\item[3.]  As we explain below in Remark \ref{R:Wp-neq-pWm}, there are no finite multiplicity eigenvalues if $\perm_+(\omega)= \perm_-(\omega)$ (i.e.~no plasmons without an interface), because $N^{(k)}$ is empty in this case.
	\end{enumerate}
	\erem

	We can now state the main result for $\omega$ in the exceptional set $\Omega_0$.

	\begin{mainthm}[One dimension; exceptional set] \label{TheoPlum2}
		\ \\
		{\bf a)} Let $k \in \R \setminus \{ 0 \}$.\newline
		\noindent\textbf{Spectrum}:
		\beq\label{E:1Dspec-Om0}
		\sigma  (\cL_k)  \cap \Omega_0 = \left\{ \begin{array}{cl} \Omega_0 \setminus \{ 0 \} & \text{if }~\perm_+ (0) \neq 0, ~  \perm_- (0) \neq 0 , ~  \text{and }~\perm_+ (0) + \perm_- (0) \neq 0, \\
			\Omega_0 & \text{otherwise. } \end{array} \right.
		\eeq
		\noindent\textbf{Point and discrete spectrum}:
		\begin{align}
			\sigma_p^{(< \infty)}  (\cL_k) \cap \Omega_0  &=  \{ \omega \in \Omega_0 : \omega^2\perm_+ (\omega) = \omega^2\perm_- (\omega) =  0,
			\perm_+ (\omega) \neq 0, ~  \perm_- (\omega) \neq 0 , ~ \text{and} ~  \perm_+ (\omega) + \perm_- (\omega) = 0 \} \label{Plum2}\\
			&=  \left\{ \begin{array}{cl} \{ 0 \} & \text{if }  ~ 0\in D(\perm), ~ \perm_+ (0) \neq 0, ~  \perm_- (0) \neq 0 , ~  \text{and } \perm_+ (0) + \perm_- (0) = 0, \\
				\emptyset & \text{otherwise, } \end{array} \right. \notag\\
			\sigma_p^{(\infty)}  (\cL_k) \cap \Omega_0  &=  \{ \omega \in \Omega_0 : \perm_+ (\omega) = 0 \text{ or } \perm_- (\omega) = 0 \}. \label{Plum3}
		\end{align}
		In particular, $ \sigma_p  (\cL_k) \cap \Omega_0=\sigma  (\cL_k)  \cap \Omega_0$.\\
		If $\sigma_p^{(<\infty)}  (\cL_k) \cap \Omega_0$ is non-empty, and thus equals $\{ 0 \}$, then $\omega = 0$ is an algebraically simple eigenvalue, and $\Ran(L_k (0))$ is closed with co-dimension $1$. In particular, $L_k(0)$ is Fredholm with index $0$.

		Moreover,
		\beq\label{E:1Ddiscsp-Om0}
		\sigma_d (\cL_k) \cap \Omega_0 = \emptyset.
		\eeq
		\noindent\textbf{Essential and Weyl spectrum}:
		\begin{align}
			\sigma_{e,2}  (\cL_k) &\cap \Omega_0 = \sigma_{e,3}  (\cL_k) \cap \Omega_0 =
			\sigma_{e,4}  (\cL_k) \cap \Omega_0 =
			\sigma_{\text{Weyl }}  (\cL_k) \cap \Omega_0 = \sigma_p^{(\infty)}  (\cL_k) \cap \Omega_0\  \text{ (see \eqref{Plum3}), } \label{E:1DWeyl-Om0}\\
			\sigma_{e,1}  (\cL_k) &\cap \Omega_0 =  \{ \omega \in \Omega_0 : (\perm_+ (\omega) = 0,~ \omega^2\perm_- (\omega) \in [k^2, \infty ))
			\text{ or }  (\perm_- (\omega) = 0,~ \omega^2\perm_+ (\omega) \in [k^2, \infty )) \}, \label{E:1Des1-Om0} \\
			\sigma_{e,5}  (\cL_k) &\cap \Omega_0 = \sigma  (\cL_k)  \cap \Omega_0 \ (\text{see } \eqref{E:1Dspec-Om0})\label{E:1Des5-Om0}.
		\end{align}
		\\
		\noindent{\bf b)} Let $k = 0$.\\
		\noindent\textbf{Spectrum}:
		\beq\label{E:1Dsp-Om0k0}
		\sigma (\cL_k) \cap \Omega_0 = \Omega_0.
		\eeq
		\noindent\textbf{Point and discrete spectrum}:
		\begin{align}
			&\sigma_p^{(<\infty)} (\cL_k) \cap \Omega_0 = \sigma_d (\cL_k) \cap \Omega_0 = \emptyset,\label{E:1Dptsp-Om0k0}\\
			&\sigma_p^{(\infty)} (\cL_k) \cap \Omega_0 = \{ \omega \in \Omega_0 : \perm_+ (\omega) = 0  \text{ or } \perm_- (\omega) =0 \}.\label{E:1Dptspinf-Om0k0}
		\end{align}
		\noindent\textbf{Essential and Weyl spectrum}:
		\beq\label{E:1Dspec-ess-Om0k0}
		\sigma_{e,j} (\cL_k) \cap \Omega_0 = \sigma_{\text{Weyl }} (\cL_k) \cap \Omega_0 = \Omega_0, \quad j = 1, \dots, 5.
		\eeq
	\end{mainthm}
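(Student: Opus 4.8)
The plan is to handle the case $k=0$ in Theorem \ref{TheoPlum2}b), which should be substantially simpler than the $k\neq 0$ case because the operator $T_0(\pa_{x_1})$ decouples and the eigenvalue condition \eqref{E:ev.cond-1D} degenerates to $W_+(\omega)W_-(\omega)=0$, i.e.\ it holds precisely on $\Omega_0$. So the whole analysis on $\Omega_0$ must be done directly rather than as a perturbation of the reduced result.

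\textbf{Step 1: Analyze $L_0(\omega)$ componentwise.} For $k=0$ we have $\nabla_0=(\pa_{x_1},0,0)^T$, so $T_0(\pa_{x_1})=\operatorname{diag}(0,-\pa_{x_1}^2,-\pa_{x_1}^2)$ and the divergence condition $\nabla_0\cdot(\tilde W(\omega)u)=0$ reads $(\tilde W(\omega)u_1)'=0$, while $\cR_0=\{f: f_1'=0\}$, i.e.\ $f_1$ is (locally) constant, hence $f_1=0$ since $f\in L^2$. So $P(\omega;1)u = (-W(x_1,\omega)u_1,\, -u_2''-W u_2,\, -u_3''-W u_3)^T$ with the first component forced to lie in $\{f_1=0\}$; the domain constraint $(\tilde W u_1)'=0$ together with $\tilde W$ piecewise constant means $\tilde W_\pm u_1$ is constant on each half-line, hence $u_1=0$ on any half-line where $\tilde W_\pm\neq 0$. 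I would first treat the generic situation $\tilde W_+(\omega)\neq 0\neq \tilde W_-(\omega)$ (so $u_1=0$) and reduce to the two scalar Helmholtz-type operators $-\pa_{x_1}^2 - W_\pm$ on $\R$; then separately treat the degenerate subcase $\tilde W_+(\omega)=0$ or $\tilde W_-(\omega)=0$.

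\textbf{Step 2: The whole of $\Omega_0$ is in the spectrum.} Take $\omega\in\Omega_0$, so (say) $W_+(\omega)=0$. If additionally $\tilde W_+(\omega)=0$, then $u_1$ is unconstrained on $\R_+$ and any smooth compactly-supported-in-$\R_+$ choice of $u_1$ (with $u_2=u_3=0$) lies in $\ker L_0(\omega)$; this is an infinite-dimensional kernel, giving an eigenvalue of infinite multiplicity, and it also instantly produces Weyl sequences, so $\omega\in\sigma_{e,j}$ for all $j$. If instead $W_+(\omega)=0$ but $\tilde W_+(\omega)\neq 0$ (so $\omega\neq 0$ and $W_+(\omega)=\omega^2\tilde W_+(\omega)=0$ is impossible) — actually this cannot happen, so on $\Omega_0$ we always have $W_\pm(\omega)=0 \Leftrightarrow \tilde W_\pm(\omega)=0$ except possibly at $\omega=0$; at $\omega=0$ we have $W_\pm(0)=0$ automatically while $\tilde W_\pm(0)$ may be nonzero. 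I would lay this dichotomy out cleanly. When $\tilde W_\pm(\omega)\neq 0$ but $W_\pm(\omega)=0$ (the $\omega=0$ case), the scalar operator $-\pa_{x_1}^2$ on $L^2(\R)$ has no $L^2$ eigenfunction but $0$ is in its essential spectrum, which is where constructing an explicit oscillatory/plateau Weyl sequence $u_n$ (a cut-off constant, rescaled) on the corresponding half-line shows $\omega\in\sigma_{\mathrm{Weyl}}$ and, because the kernel is trivial in this subcase but the range fails to be closed, $\omega\in\sigma_{e,j}$ for all $j=1,\dots,5$ via Lemma \ref{L:ran_not-closed}. Combining, $\Omega_0\subseteq\sigma_{e,j}(\cL_0)$ for all $j$, which gives \eqref{E:1Dsp-Om0k0} and \eqref{E:1Dspec-ess-Om0k0}.

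\textbf{Step 3: Point and discrete spectrum for $k=0$.} For $\sigma_p^{(\infty)}$, the argument of Step 2 shows exactly: $\omega\in\Omega_0$ yields an infinite-dimensional kernel iff $u_1$ is free on some half-line iff $\tilde W_+(\omega)=0$ or $\tilde W_-(\omega)=0$, which is \eqref{E:1Dptspinf-Om0k0}. It remains to show $\sigma_p^{(<\infty)}(\cL_0)\cap\Omega_0=\emptyset$ and $\sigma_d(\cL_0)\cap\Omega_0=\emptyset$. For this I would argue: any $\omega\in\sigma_p(\cL_0)\cap\Omega_0$ lies in $\sigma_p^{(\infty)}$ (no finite-multiplicity eigenvalues occur), because an eigenfunction must have $u_2,u_3$ solving $-u_j''=W_\pm u_j$ decaying at $\pm\infty$; when $W_\pm(\omega)=0$ the only decaying solution of $-u_j''=0$ on a half-line vanishing at infinity is $u_j\equiv 0$, while on the other half-line, matching forces $u_j\equiv0$ as well unless $W_\mp(\omega)$ also vanishes in the right way — and a careful check (using that $W$ is real-nonnegative would be relevant but here $W$ is complex, so one uses the explicit exponential solutions $e^{\pm\sqrt{-W_\pm}x_1}$ and the interface conditions) shows no nonzero $L^2$ solution with only finitely many independent generalized eigenvectors exists; any eigenvalue that does occur is of the $u_1$-type, hence infinite multiplicity. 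Then $\sigma_d\cap\Omega_0=\emptyset$ follows since $\sigma_d\subseteq\sigma_p^{(<\infty)}$ by definition \eqref{E:sp-def-1D}. This gives \eqref{E:1Dptsp-Om0k0}.

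\textbf{Main obstacle.} The genuinely delicate point is Step 3, the claim that there are \emph{no} finite-multiplicity eigenvalues on $\Omega_0$ when $k=0$: one must rule out the possibility that the two decoupled scalar problems on $\R_+$ and $\R_-$ conspire — via the interface conditions $\llbracket E_2\rrbracket=\llbracket E_3\rrbracket=0$ and the continuity of $H$ — to produce an isolated $L^2$ eigenfunction precisely when $W_+(\omega)=0$ (or $W_-(\omega)=0$). Handling the case $\tilde W_\pm(\omega)\neq 0$, $W_\pm(\omega)=0$ (i.e.\ $\omega=0$, where the half-line operator is $-\pa_{x_1}^2$ with no eigenvalue but threshold resonance) versus the case $\tilde W_\pm(\omega)=0$ requires care to keep the bookkeeping of the domain constraint $(\tilde W u_1)'=0$ straight, and to confirm that the first component cannot produce a finite-dimensional eigenspace. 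I expect most of the scalar ODE computations (explicit exponential fundamental systems, the interface matching, the Weyl-sequence construction of plateau functions on a half-line) to be routine once the componentwise decoupling is set up, so the write-up should be short relative to the $k\neq 0$ analysis.
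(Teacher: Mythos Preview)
Your proposal addresses only part b) of the theorem and says nothing about part a), which is the longer and more delicate half (the five inclusions A)--E) in the paper's proof, the Fredholm index-$0$ statement, and in particular the fact that $\sigma_{e,1}$ and $\sigma_{e,2}$ can differ on $\Omega_0$). That omission alone means the proof is far from complete.

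Within part b), Steps~1 and~3 are essentially the paper's argument (its items F) and the preliminaries to G)), but Step~2 contains a genuine gap. You write that when $\tilde W_+(\omega)=0$ the infinite-dimensional kernel ``instantly produces Weyl sequences, so $\omega\in\sigma_{e,j}$ for all $j$.'' An $L^2$-orthonormal sequence in the kernel is indeed a Weyl sequence, giving $\omega\in\sigma_{e,2}$, but this does \emph{not} place $\omega$ in $\sigma_{e,1}$: semi-Fredholm permits an infinite-dimensional kernel provided the range is closed and the cokernel is finite-dimensional. Part~a) of this very theorem furnishes counterexamples --- the set on the right of \eqref{Plum6} consists of $\omega$ with $\dim\ker L_k(\omega)=\infty$ yet $\omega\notin\sigma_{e,1}(\cL_k)$. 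The paper's argument (its item G)) avoids your case split entirely: assuming only $W_+(\omega)=0$, one observes that every kernel element has $u_2=u_3=0$ on $\R_+$ (since $u_{2,3}''=0$ there with $u_{2,3}\in L^2(\R_+)$), so the third-component Weyl sequence $u^{(n)}=(0,0,n^{-1/2}\varphi((\cdot-n^2)/n))$ supported in $\R_+$ lies in $(\ker L_0(\omega))^{\perp}$, and Lemma~\ref{L:ran_not-closed} then yields non-closed range, hence $\omega\in\sigma_{e,1}$. This single construction works uniformly, whether or not $\tilde W_+(\omega)$ vanishes.
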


	\begin{remark} \label{RemPlum1}
		\begin{enumerate}
			\item[a)] Theorem \ref{TheoPlum2} (part a)) shows that, within the $k$-range $\R \setminus \{ 0 \}$, only $\sigma_{e,1} (\cL_k) \cap \Omega_0$ depends (possibly) on $k$, while for all other considered parts of the spectrum, and the resolvent set, their intersection with $\Omega_0$ is independent of $k \in \R \setminus \{ 0 \}$.
			\item[b)] Equations \eqref{E:1DWeyl-Om0} and \eqref{E:1Des1-Om0} of Theorem \ref{TheoPlum2} show that, for $k \in \R \setminus \{ 0 \}$,
			\be\label{Plum6}
			\ba
			(\sigma_{e,2} (\cL_k) \cap \Omega_0) &\setminus (\sigma_{e,1} (\cL_k) \cap \Omega_0) = \\
			& \{ \omega \in \Omega_0 : (\perm_+ (\omega) = 0, ~ \omega^2\perm_- (\omega) \notin [k^2, \infty)) \text{ or } (\perm_- (\omega) = 0, ~ \omega^2\perm_+ (\omega) \notin [k^2, \infty)) \}
			\ea
			\ee
			and
			$$ (\sigma_{e,5} (\cL_k) \cap \Omega_0) \setminus (\sigma_{e,4} (\cL_k) \cap \Omega_0) = \sigma_p^{(<\infty)} (\cL_k) \cap \Omega_0 $$
			can both be non-empty, in which case we have three different essential spectra.
		\end{enumerate}
	\end{remark}
	\brem[Isolatedness]
	All spectral properties of the pencil are determined by the function values $\perm_\pm(\omega)$ and not by the position of $\omega$ in $\C$. Hence, we cannot retrieve any geometric properties of the spectrum viewed as a set in the $\omega$-plane unless specific information on $\omega\mapsto \perm_\pm (\omega)$ is provided.\\
	An extreme case of such geometric properties is when $\omega^2\perm_\pm\equiv c_\pm \in \C\setminus \{0\}$ with $c_\pm$ such that \eqref{E:ev.cond-1D} holds. Then the whole complex $\omega$-plane is the set of isolated eigenvalues (isolated according to our definition, i.e. with the isolatedness in the $\lambda$-plane). If $c_+=0$ or $c_-=0$, then the whole complex $\omega$-plane consists of eigenvalues but these are not isolated,  see \eqref{Plum3} and \eqref{E:NL-eval}.\\
	Another special case is that of non-constant rational functions $\perm_\pm$. Here there is a finite number of solutions of \eqref{E:ev.cond-1D} and hence a finite number of eigenvalues $\omega$. As zeroes of a rational function, they are all isolated in the $\omega$-plane. Note also that, more generally, for non-constant meromorphic functions $\perm_\pm$ the set $N^{(k)}$ is a subset of the set of zeros of the meromorphic function $f(\omega):=k^2(\perm_+(\omega)+\perm_-(\omega))-\omega^2\perm_+(\omega)\perm_-(\omega)$  and hence consists of isolated points. If, in addition, the set $N^{(k)}$ is bounded, then it must be isolated also from $M^{(k)}_+\cup M^{(k)}_- \cup \Omega_0$ since each point in $N^{(k)}$ lies outside $M^{(k)}_+\cup M^{(k)}_- \cup \Omega_0$. But if, e.g., $\omega^2\perm_+(\omega)=\omega^2\perm_-(\omega)=0$ for one such eigenvalue $\omega$, then $\omega$ is not isolated according to our definition.
	\erem
	\brem[Block diagonal structure]\label{R:block1D}
	It is most easily visible in \eqref{E:Tk} that the operator has a block diagonal structure. Also the domain $\cD_{k,\omega}$ is such that the conditions on the first two components are decoupled from those on the third component, which is most apparent in \eqref{E:Dkom}. As a result, the spectrum of $\cL_k$ is the union of the spectra of $\cL_k^{(1,2)}$ and $\cL_k^{(3)}$ in the sense of $\eqref{E:Pspec}$, where
	$$\cL_k^{(1,2)}:=\bspm k^2 & \ri k\pa_{x_1} \\ \ri k\pa_{x_1} & -\pa_{x_1}^2 \espm-\lambda \omega^2\perm(x_1,\omega)$$
	and $\cL_k^{(3)}:=k^2-\pa_{x_1}^2-\lambda \omega^2\perm(x_1,\omega)$ with the domains
	$$
	\begin{aligned}
		D(\cL_k^{(1,2)}) &:= \{ (u_1,u_2)^T:  u \in \cD_{k,\omega}\},\\
		D(\cL_k^{(3)}) &:= \{ u_3:  u \in \cD_{k,\omega}\}.
	\end{aligned}
	$$
	In our analysis we indirectly take advantage of this property. Namely when constructing Weyl sequences, it is often easier to use sequences $(u^{(n)})\subset \cD_{k,\omega}$ with $u^{(n)}_1=u^{(n)}_2=0$ such that only the scalar operator $k^2-\pa_{x_1}^2-\lambda \omega^2\perm(x_1,\omega)$ acting on $u^{(n)}_3$ is activated.
	\erem

	\begin{example}\label{Ex:1D-Drude}
		For illustration of the spectrum we plot in Fig. \ref{F:spec1D} the sets $M^{(k)}_+, M^{(k)}_-, N^{(k)}$, $\Omega_0$, and $S$ for the case of the interface of a Drude material and a dielectric, i.e. (normalizing $\perm_0=1$)
		\beq\label{E:Wt-ex}
		\perm(x_1,\omega)=1 +\hat{\chi}(x_1,\omega)
		\eeq
		with $\hat{\chi}$ given in \eqref{E:chi}, \eqref{E:Drude} and with the parameters
		\beq \label{E:param-ex}
		\eta=1, c_D=2\pi 0.8^2, \gamma=1, k\in \{3,0.7\}.
		\eeq
		Clearly, $S=\{0,-\ri \gamma\}$ and
		$$\Omega_0=\{\omega\in D(\perm): \perm_-(\omega)=0\}=\left\{\tfrac{1}{2}\left(-\ri\gamma \pm \sqrt{4c_D-\gamma^2}\right)\right\}.$$
		Note that the sets $M^{(k)}_+, M^{(k)}_-$,  and $N^{(k)}$ are symmetric about the imaginary axis since $\perm_\pm(-\overline{\omega})=\overline{\perm_\pm(\omega)}$ for all $\omega\in D(\perm)$. Hence, if $\omega^2\perm_\pm(\omega)\in[k^2,\infty)$, then $\omega^2\perm_\pm(-\overline{\omega})\in[k^2,\infty)$ and if $k^2=\frac{\omega^2\perm_+(\omega)\perm_-(\omega)}{\perm_+(\omega)+\perm_-(\omega)}$, then
		$$k^2= \frac{\overline{\omega^2\perm_+(\omega)\perm_-(\omega)}}{\overline{\perm_+(\omega)+\perm_-(\omega)}} = \frac{\omega^2\perm_+(-\overline{\omega})\perm_-(-\overline{\omega})}{\perm_+(-\overline{\omega})+\perm_-(-\overline{\omega})}.$$

		A more detailed calculation shows that
		$$M^{(k)}_-= (P\cap D(\perm)) \setminus \Omega_0,$$
		where
		$$
		\begin{aligned}
			&P=\{\ri s:s\in(-\gamma,0) \text{ and } (s^2+k^2)(s+\gamma)+ c_D s \leq 0\} \\
			&\cup\left\{\ri s\pm\sqrt{-\frac{c_D\gamma}{2s}-(s+\gamma)^2}:s\in \left(-\frac{\gamma}{2},0\right), \frac{c_D\gamma}{2s}+(s+\gamma)^2\leq 0 \text{ and } \frac{c_D}{2s}(2s+\gamma)+(2s+\gamma)^2\leq -k^2 \right\}.
		\end{aligned}
		$$
		The set $M^{(k)}_-$ is unbounded in both horizontal directions and approaches the real axis at infinity. Along the imaginary axis the set $M^{(k)}_-$ equals the set $\{\ri s: s\in(-\gamma,0), (s^2+k^2)(s+\gamma)+c_D s\leq 0\}$. Because $-\ri \gamma \notin M^{(k)}_-$, the set $M^{(k)}_-$ is not closed. As a result, the spectrum of $\cL_k$ is not closed in the $\omega$-plane! This is no contradiction due to our definition of the spectrum in \eqref{E:Pspec}.

		The set $M^{(k)}_+$ is given by
		$$M^{(k)}_+= (-\infty,-\sqrt{k^2/(1+\eta)}] \cup [\sqrt{k^2/(1+\eta)},\infty).$$
		\begin{figure}[h!]
			\centering
			\includegraphics[width=0.45\linewidth]{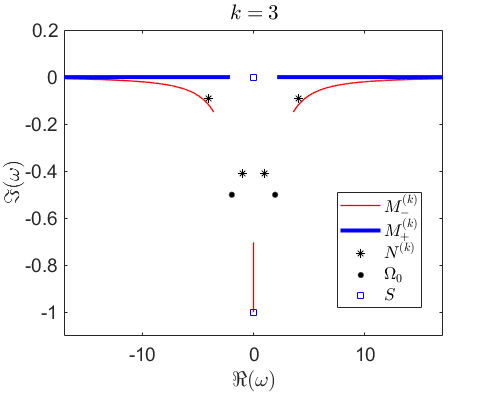}
			\includegraphics[width=0.45\linewidth]{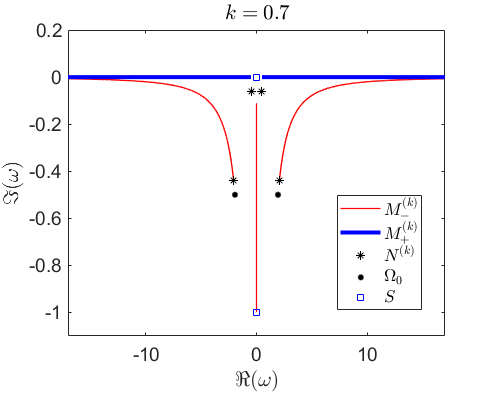}
			\caption{The sets  $M^{(k)}_+, M^{(k)}_-, N^{(k)}$, $\Omega_0$, and $S$ for the 1D Drude setting \eqref{E:chi}, \eqref{E:Drude} with  $\perm_0=1$ and the parameter values in \eqref{E:param-ex}.}
			\label{F:spec1D}
		\end{figure}
		For the set $N^{(k)}$ we get by a simple calculation
		$$N^{(k)}=\left\{\omega \in D(\perm): \omega^4+\ri\gamma\omega^3-\left(c_D+k^2\frac{2+\eta}{1+\eta}\right)\omega^2-\ri\gamma k^2\frac{2+\eta}{1+\eta}\omega +\frac{c_D k^2}{1+\eta}=0\right\}\setminus (\Omega_0\cup M^{(k)}_+\cup M^{(k)}_-),$$
		which shows that $N^{(k)}$ consists of at most four points in $\C$.

		It is also simple to show that $\Omega_0 \subset \stwo\setminus\sone,$ whence, in particular, $\stwo \neq \sone.$
	\end{example}

	\begin{example}\label{Ex:1D-Lorentz}
		For comparison, in Fig. \ref{F:spec1D-Lorentz} we plot also the spectrum for the interface of a Lorentz metal and a dielectric, i.e. \eqref{E:chi} and \eqref{E:Lorentz}. Again, we normalize $\perm_0=1$ and choose the parameters
			\beq\label{E:param-ex-Lorentz}
			\eta =1, \omega_*=1, \gamma = 1, c_L=2, k\in\{3,0.7\}.
			\eeq
			Here we obtain $S=\{-\ri\tfrac{\gamma}{2}\pm(\omega_*^2-\tfrac{\gamma^2}{4})^{1/2}\}$ and
			$\Omega_0=\{0,-\ri\tfrac{\gamma}{2}\pm (c_L+\omega_*^2-\tfrac{\gamma^2}{4})^{1/2}\}$. The set $M_+^{(k)}$ is the same as in Example \eqref{Ex:1D-Drude} and the set $M_-^{(k)}$ is given as
			$$M_-^{(k)} = \left\{\omega\in D(\perm): \omega^4 +\ri\gamma \omega^3 -(k^2+t+c_L+\omega_*^2)\omega^2 -\ri\gamma (k^2+t)\omega+\omega_*^2(k^2+t)=0 \ \text{for some} \ t\geq 0\right\},$$
			i.e. it consists of four curves.

		The set $N^{(k)}$ has again at most four points because
			$$
			\begin{aligned}
				N^{(k)}=& \left\{\omega\in D(\perm): \omega^4+\ri\gamma\omega^3-\left(\frac{2+\eta}{1+\eta}k^2+c_L+\omega_*^2\right)\omega^2 -\ri\gamma k^2 \frac{2+\eta}{1+\eta}\omega + \frac{k^2}{1+ \eta}(c_L+(2+\eta)\omega_*^2)=0\right\}\\
				& \setminus (\Omega_0\cup M_+^{(k)}\cup M_-^{(k)}).
			\end{aligned}
			$$
			In our numerical examples one can observe from the figures that, unlike in the Drude case, the spectrum does not intersect the imaginary axis (except for $\omega=0$). This can be proven for $\omega_*>\gamma/2$ by a short calculation. In Section \ref{S:t-dep} we discuss the significance of this threshold for the time dependent Maxwell equations.

		Similarly to Example \ref{Ex:1D-Drude} we have $\stwo \neq \sone$ because  $\Omega_0\setminus \{0\} \subset \stwo\subset \sone.$

		\begin{figure}[h!]
			\centering
			\includegraphics[width=0.45\linewidth]{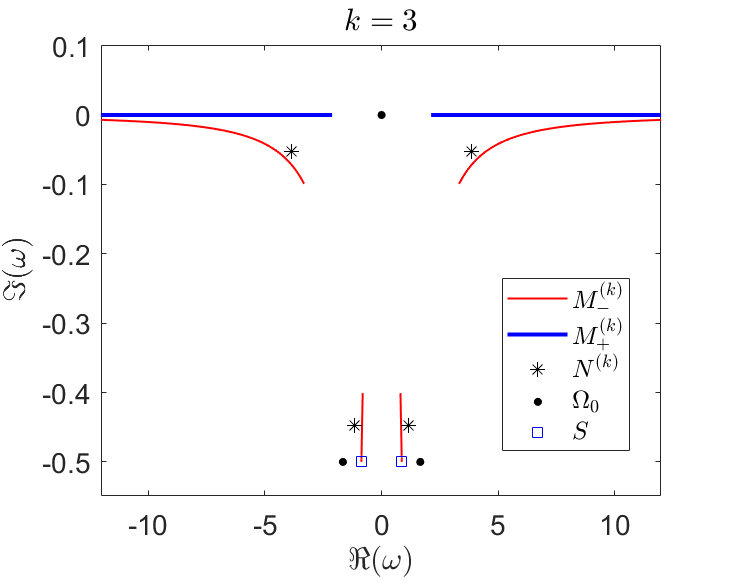}
			\includegraphics[width=0.45\linewidth]{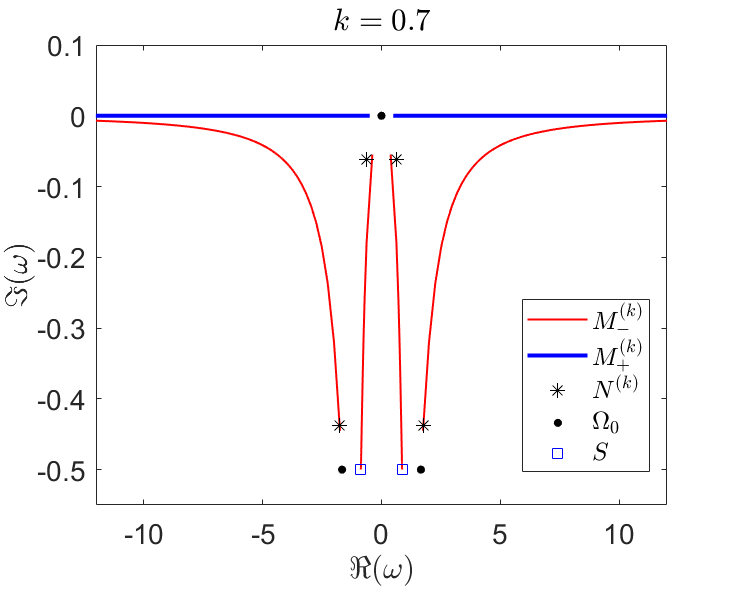}
			\caption{The sets  $M^{(k)}_+, M^{(k)}_-, N^{(k)}$, $\Omega_0$, and $S$ for the 1D Lorentz setting \eqref{E:chi}, \eqref{E:Lorentz} with  $\perm_0=1$ and the parameter values in \eqref{E:param-ex-Lorentz}.}
			\label{F:spec1D-Lorentz}
		\end{figure}
	\end{example}
	\subsection{Main Results in Two Dimensions}\label{S:main-res-2D}
	For the two-dimensional setting we define the operator pencil
	$$\cP:=\cL:=(L(\omega;\lambda))_{\substack{\omega\in D(\perm)\\ \lambda\in \C}},$$
	where
	$$L(\omega;\lambda):=A(\omega)-\lambda B(\omega), \ A(\omega)u:=\nabla\times \nabla\times u, \ B(\omega)u:= \omega^2\perm(x_1,\omega)u, \ \nabla:=(\pa_{x_1}, \pa_{x_2}, 0)^T.$$

	Just like in the one-dimensional case, we also define
	$$L(\omega):= L(\omega;1).$$

	Next, we describe our choice of function spaces. This choice is motivated in Sec.~\ref{S:2D}. We set
	$$H:= L^2(\R^2,\C^3)$$
	and choose the domain
	\begin{equation}
		\cD_\omega:=\{E \in L^2(\R^2,\C^3):  \nabla \times E, \nabla\times \nabla\times E\in L^2(\R^2,\C^3),
		\nabla\cdot(\perm E)=0 \ (\text{distributionally})\}.
	\end{equation}
	The range space becomes
	\beq\label{E:R}
	\cR:= \{r\in L^2(\R^2,\C^3): \nabla \cdot r=0 \ \text{distributionally}\}
	\eeq
	equipped with the $L^2$-inner product.

	Analogously to the 1D case $B(\omega):H\supset \cD_{\omega} \to \cR$ is bounded and  $A(\omega):H\supset \cD_{\omega} \to \cR$ is closed since it is bounded as an operator from $\cD_{\omega}$ to $\cR$. Also here the Hilbert space property of $\cD_{\omega}$ is used, see Lemma \ref{L:HS-2D}.

	We use the notation
	\begin{align}
		M_\pm:= &\{ \omega \in D(\perm)\setminus\Omega_0: \omega^2\perm_\pm (\omega)\in (0,\infty)\},\label{E:Mpm}\\
		N:= &\{ \omega \in D(\perm)\setminus \Omega_0: \text{ there exists } a\geq 0 \text{ such that } \omega^2\perm_\pm (\omega)\notin [a,\infty) \text{ and } \eqref{E:ev.cond-2D} \text{ holds}\},\label{E:N}\\
		& a(\perm_+ (\omega) + \perm_-(\omega))= \omega^2\perm_+(\omega)\perm_-(\omega),\label{E:ev.cond-2D}
	\end{align}
	and an analogous notation to \eqref{E:red-sets} for the "reduced" spectrum outside $\Omega_0$. Note that the sets $M_\pm$ coincide with $M^{(0)}_\pm$ from the one dimensional setting. For the set $N$ we have $N=\cup_{k\geq 0} N^{(k)}$. Hence, $N$ is typically a set of curves in $\C$.

	Our main results are
	\begin{mainthm}[Two dimensions; reduced spectrum]\label{T:main2D}
		~\newline\textbf{Spectrum}:
		\beq\label{E:2Dsp-red}
		\sigma^{{\rm red}}(\cL) = M_+\cup M_-\cup N.
		\eeq
		\textbf{Point spectrum}:
		\beq\label{E:2Dptsp-red}
		\sigma_p^{{\rm red}}(\cL) = \emptyset.
		\eeq
		\textbf{Essential and Weyl spectrum}:
		\beq\label{E:2Dess-red}
		\sigma^{{\rm red}}_{e,j}(\cL)=\sigma_\text{Weyl}^{{\rm red}}(\cL) =  M_+\cup M_-\cup N, \ j=1,\dots, 5.
		\eeq
	\end{mainthm}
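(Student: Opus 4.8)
The plan is to leverage the block-diagonal structure of $T(\pa_{x_1},\pa_{x_2})$ — exactly as in Remark \ref{R:block1D} for the 1D case — so that the third component decouples and the analysis reduces to a scalar Helmholtz-type operator $-\pa_{x_1}^2-\pa_{x_2}^2-\lambda W(x_1,\omega)$ plus a $2\times 2$ block. First I would reduce the divergence-constrained pencil $L(\omega;1)$ on $\cD_\omega\to\cR$ to more tractable pieces, checking that the constraint $\nabla\cdot(\tilde W E)=0$ together with $\nabla\cdot r=0$ decouples consistently across the two blocks (this uses the interface analysis in Appendix \ref{App:traces2D} and the fact that $\omega\notin\Omega_0$ so $W_\pm(\omega)\neq 0$, making the constraint genuinely restrictive rather than giving an infinite-dimensional kernel of gradients). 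The key point is that on $D(\tilde W)\setminus\Omega_0$ both $W_\pm(\omega)\ne 0$, so the gradient fields that would otherwise live in $\ker A(\omega)$ are excluded by the divergence condition; this is what forces $\sigma_p^{{\rm red}}(\cL)=\emptyset$, because full localization in both $x_1$ and $x_2$ is impossible for a constant-coefficient problem in $x_2$.

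Second, I would establish the three inclusions making up $\sigma^{{\rm red}}(\cL)=\sigma_{\text{Weyl}}^{{\rm red}}(\cL)=M_+\cup M_-\cup N$ by constructing explicit Weyl sequences, thereby simultaneously proving membership in $\sigma_{e,2}$ (hence in all $\sigma_{e,j}$, $j\ge 2$, by Remark \ref{rem:essinc}) and, via Lemma \ref{L:ran_not-closed}, non-closedness of the range, which also handles $\sigma_{e,1}$. For $\omega\in M_\pm$ (so $W_\pm(\omega)>0$): using the plane-wave-like radiation modes that oscillate as $x_1\to\pm\infty$, take $E^{(n)}$ supported in $\pm x_1>0$ of the form $e^{\ri\xi_1 x_1}e^{\ri\xi_2 x_2}$ (third component only) with $\xi_1^2+\xi_2^2=W_\pm(\omega)$, cut off smoothly on a spatial scale growing with $n$ and translated to $x_1=\pm\infty$ to guarantee weak convergence to $0$ and vanishing $L^2$-norm of $L(\omega)E^{(n)}$; the condition $W_\pm(\omega)>0$ is precisely what makes the frequencies $(\xi_1,\xi_2)$ real. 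For $\omega\in N$: here there exists $a\ge 0$ with $W_\pm(\omega)\notin[a,\infty)$ and the relation \eqref{E:ev.cond-2D}; I would take the surface-mode solution, decaying in $x_1$ on both sides (glued via the interface conditions, whose solvability is exactly \eqref{E:ev.cond-2D} with $a=\xi_2^2$), times $e^{\ri\xi_2 x_2}$, and again cut off and translate in $x_2\to\pm\infty$ to get a Weyl sequence. For the reverse inclusion $\sigma^{{\rm red}}(\cL)\subset M_+\cup M_-\cup N$, I would show that if $\omega\notin M_+\cup M_-\cup N$ then $L(\omega;1)$ is bijective with bounded inverse, by solving the decoupled scalar and $2\times2$ problems explicitly via Fourier transform in $x_2$ and matching in $x_1$ — here the hypotheses $W_\pm(\omega)\notin(0,\infty)$ and the failure of \eqref{E:ev.cond-2D} for every admissible $a$ give, respectively, exponential decay of the $x_1$-fundamental system for every $x_2$-frequency and invertibility of the $2\times 2$ matching matrix, yielding the resolvent estimates \eqref{E:bded-1}, \eqref{E:bded-2} uniformly in the $x_2$-frequency.

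Third, to complete the essential-spectrum identification I would check that $\sigma_{e,5}^{{\rm red}}(\cL)=M_+\cup M_-\cup N$ as well. Since $\sigma^{{\rm red}}(\cL)=\sigma_d^{{\rm red}}(\cL)\,\dot\cup\,\sigma_{e,5}^{{\rm red}}(\cL)$ by Remark \ref{R:disc-sfive}, and since $\sigma_p^{{\rm red}}(\cL)=\emptyset$ forces $\sigma_d^{{\rm red}}(\cL)=\emptyset$, we get $\sigma_{e,5}^{{\rm red}}(\cL)=\sigma^{{\rm red}}(\cL)=M_+\cup M_-\cup N$, and the chain $\sigma_{e,1}\subset\cdots\subset\sigma_{e,5}$ squeezes all five to the same set; the Weyl-spectrum equality follows from Remark \ref{rem:essential} (identifying $\sigma_{e,2}$ with $\sigma_{\text{Weyl}}$) once the spaces $H,\cR$ are seen to be separable infinite-dimensional. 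The main obstacle I anticipate is the reverse inclusion: making the Fourier-in-$x_2$ reduction rigorous in the divergence-constrained space $\cD_\omega$, in particular controlling the interface traces (Appendix \ref{App:traces2D}) uniformly across all $x_2$-frequencies and assembling the fibrewise resolvent bounds into the global estimates \eqref{E:bded-1}–\eqref{E:bded-2}; the degenerate fibre-frequencies where $\xi_2^2$ approaches a value for which \eqref{E:ev.cond-2D} nearly holds, or where $W_\pm(\omega)-\xi_2^2$ changes character, will need a careful uniform argument, and that is where most of the work lies.
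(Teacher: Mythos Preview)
Your plan is essentially the paper's own route: Fourier transform in $x_2$ to reduce to the 1D pencil $L_k(\omega)$, prove the resolvent-set inclusion $\rho^{{\rm red}}(\cL)\supset D(\tilde W)\setminus(\Omega_0\cup M_+\cup M_-\cup N)$ via fibrewise solution formulas with $k$-uniform bounds (Proposition~\ref{th2} and Lemma~\ref{lem1.3}), build Weyl sequences for $M_\pm$ with third-component plane waves translated in $x_1$ (Lemma~\ref{L:ess-sp2D-x1}) and for $N$ with the 1D surface eigenfunction times $e^{\ri k_0 x_2}$ translated in $x_2$ (Lemma~\ref{L:ess-sp2D-x2}), and close with the $\sigma_d^{{\rm red}}=\emptyset$ / Remark~\ref{R:disc-sfive} argument.

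One technical point you will need to add: for $\omega\in N$ the naive sequence $\varphi\bigl((x_2-n^2)/n\bigr)\psi(x_1)e^{\ri k_0 x_2}$ is \emph{not} in $\cD_\omega$, because cutting off in $x_2$ destroys the divergence condition $\nabla\cdot(\tilde W E)=0$ (the $x_2$-derivative of the cutoff produces an extra term). The paper fixes this by adding an explicit $L^2$-small correction $r_n$ (see \eqref{E:Weylseq-2D-x2}) chosen so that the sum is exactly divergence-free; you should anticipate the same step. Also, your $\sigma_{e,1}$ argument via Lemma~\ref{L:ran_not-closed} works but is more than needed: once $\sigma_p^{{\rm red}}(\cL)=\emptyset$ gives $\ker L(\omega)=\{0\}$, any $\omega\in\stwo^{{\rm red}}(\cL)$ automatically fails to be semi-Fredholm and hence lies in $\sone^{{\rm red}}(\cL)$ directly from the definitions.
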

	\begin{mainthm}[Two dimensions; exceptional set]\label{T:main2D_Om0}
		~\newline\textbf{Spectrum}:
		\beq\label{E:2Dsp-Om0}
		\sigma(\cL)\cap \Omega_0 = \Omega_0.
		\eeq
		\textbf{Point spectrum}:
		\beq\label{E:2Dptsp-Om0}
		\begin{aligned}
			\sigma_p^{(<\infty)}(\cL)\cap \Omega_0  &=  ~\emptyset,\\
			\sigma_p^{(\infty)}(\cL)\cap \Omega_0  &=  ~\{\omega \in \Omega_0: \perm_+(\omega)=0 \text{ or } \perm_-(\omega)=0 \text{ or } \perm_+(\omega)+\perm_-(\omega)=0 \}.\\
		\end{aligned}
		\eeq
		\textbf{Essential and Weyl spectrum}:
		\beq\label{E:2Dess-Om0}
		\sigma_{e,j}(\cL) \cap \Omega_0 = \sigma_\text{Weyl}(\cL)\cap \Omega_0 =  \Omega_0, \ j=1,\dots, 5.
		\eeq
	\end{mainthm}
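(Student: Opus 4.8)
The plan is to exploit the fact that on $\Omega_0$ at least one of $W_\pm(\omega)$ vanishes, so that the curl–curl operator $A(\omega)$ is no longer perturbed symmetrically by $B(\omega)=W(x_1,\omega)\cdot$ on one half-plane, and the large kernel of $\nabla\times\nabla\times$ (gradients) becomes visible. The overarching reduction is the same one already used elsewhere in the paper: write $\sigma(\cL)=\sigma_d(\cL)\,\dot\cup\,\sigma_{e,5}(\cL)$ via Remark \ref{R:disc-sfive}, use the monotonicity $\sigma_{e,1}\subset\dots\subset\sigma_{e,5}$ from Remark \ref{rem:essinc}, and the identification $\sigma_{e,2}=\sigma_\mathrm{Weyl}$ from Remark \ref{rem:essential}. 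So it suffices to (i) show the relevant frequencies lie in $\sigma_{e,1}$ (the smallest essential spectrum), which forces equality of all $\sigma_{e,j}$ and of $\sigma_\mathrm{Weyl}$ on $\Omega_0$; (ii) separately pin down $\sigma_p^{(<\infty)}$ and $\sigma_p^{(\infty)}$; and (iii) conclude $\sigma(\cL)\cap\Omega_0=\Omega_0$ and $\sigma_d(\cL)\cap\Omega_0=\emptyset$.

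First, for the point spectrum of infinite algebraic multiplicity: if $\tilde W_+(\omega)=0$ (the case $\tilde W_-(\omega)=0$ is symmetric, and $\tilde W_+(\omega)+\tilde W_-(\omega)=0$ is handled by a matching argument in the spirit of \eqref{E:ev.cond-2D} with $a=0$), then on the half-plane $\R^2_+$ the divergence constraint $\nabla\cdot(\tilde W E)=0$ is vacuous there, and any $E=\nabla\varphi$ with $\varphi\in C_c^\infty(\R^2_+)$ lies in $\cD_\omega$ and satisfies $L(\omega)E = \nabla\times\nabla\times\nabla\varphi - W_+(\omega)\nabla\varphi = 0$ since $W_+(\omega)=\omega^2\tilde W_+(\omega)=0$ and $\nabla\times\nabla\varphi=0$. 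Choosing infinitely many linearly independent such $\varphi$ with disjoint supports produces an infinite-dimensional kernel, whence $\omega\in\sigma_p^{(\infty)}$; the associated generalized-eigenvector chain condition for infinite algebraic multiplicity is then immediate. This gives the "$\supseteq$" inclusion in the formula for $\sigma_p^{(\infty)}(\cL)\cap\Omega_0$; the "$\subseteq$" direction comes from the observation that if $W_+(\omega)\ne 0\ne W_-(\omega)$ but, say, $W_-(\omega)=0$ forces... — more carefully, on $\Omega_0$ with $W_+(\omega)=0$ but $\tilde W_+(\omega)\ne 0$ (possible only at $\omega=0$), the divergence condition on the half-plane is active and kills the gradient modes, reducing the analysis to a finite-dimensional obstruction; one checks this cannot happen in 2D (no $x_2$-localization), so $\sigma_p^{(<\infty)}(\cL)\cap\Omega_0=\emptyset$.

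Second, for $\sigma_{e,1}$: I would show every $\omega\in\Omega_0$ lies in $\sigma_{e,1}(\cL)$, i.e. $L(\omega)$ is not semi-Fredholm. When $\tilde W_+(\omega)=0$ (or $\tilde W_-(\omega)=0$, or $\tilde W_+(\omega)+\tilde W_-(\omega)=0$), the infinite-dimensional kernel just constructed already shows $L(\omega)$ has infinite-dimensional kernel, so it fails to be semi-Fredholm. When instead $W_+(\omega)=0$ with $\tilde W_+(\omega)\ne 0$ — forcing $\omega=0$ — I would build a singular (Weyl) sequence supported on $\R^2_+$ using slowly-varying, spreading gradient fields (e.g. $E_n=\nabla\varphi_n$ with $\varphi_n$ a rescaled bump of width $n$), exploiting that the divergence constraint $\nabla\cdot(\tilde W_+ E)=\tilde W_+\nabla\cdot E$ is controlled to leading order; then invoke Lemma \ref{L:ran_not-closed} to deduce $\Ran(L(\omega))$ is not closed, hence $\omega\notin\sigma_{e,1}$'s complement. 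Either way $\Omega_0\subseteq\sigma_{e,1}(\cL)$, and combined with $\sigma_{e,1}\subset\sigma_{e,j}\subset\sigma_{e,5}\subseteq\sigma(\cL)$ and $\sigma(\cL)\cap\Omega_0\subseteq\Omega_0$ trivially, all the equalities in \eqref{E:2Dsp-Om0} and \eqref{E:2Dess-Om0} follow, and since $\sigma_{e,5}\cap\Omega_0=\sigma(\cL)\cap\Omega_0$ we get $\sigma_d(\cL)\cap\Omega_0=(\sigma\setminus\sigma_{e,5})\cap\Omega_0=\emptyset$.

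The main obstacle I anticipate is the careful treatment of the borderline case $\omega=0$ with $\tilde W_+(0)\ne 0$ but $W_+(0)=0$: here the divergence condition in $\cD_\omega$ is genuinely active on the half-plane, so the naive gradient modes are not admissible, and one must either produce an approximate (Weyl) sequence that nearly satisfies the constraint — quantifying how badly $\nabla\cdot(\tilde W_+\nabla\varphi_n)$ fails to vanish as the support spreads — or project onto the constrained space and control the error. Getting these estimates right (and making sure the constructed sequence is $L^2$-normalized, weakly null, and annihilated in $\cR$ in the limit) is the technical heart of the argument; the rest is bookkeeping with the inclusions among the five essential spectra.
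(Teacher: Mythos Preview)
Your overall architecture (reduce to $\sigma_{e,1}$, then cascade via Remarks \ref{rem:essinc}, \ref{rem:essential}, \ref{R:disc-sfive}) matches the paper, and your treatment of $\sigma_p^{(\infty)}$ via compactly supported gradient fields when $\tilde W_\pm(\omega)=0$ is exactly what the paper does in Lemma \ref{L:pt-sp-2D-sing}. But the $\sigma_{e,1}$ argument has a genuine gap.

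The claim ``infinite-dimensional kernel already shows $L(\omega)$ fails to be semi-Fredholm'' is wrong: by Definition \ref{def1} an operator is semi-Fredholm if its range is closed and its kernel \emph{or} cokernel is finite-dimensional. An infinite kernel is compatible with semi-Fredholmness provided the range is closed and the cokernel is finite. So for the cases $\tilde W_+(\omega)=0$, $\tilde W_-(\omega)=0$, $\tilde W_+(\omega)+\tilde W_-(\omega)=0$ you still owe a proof that the range is not closed (or that the cokernel is infinite). In the remaining case $\omega=0$ with $\tilde W_\pm(0)\neq 0$ your proposed Weyl sequence $E_n=\nabla\varphi_n$ is not in $\cD_\omega$: the divergence constraint is active and forces $\nabla\cdot E_n=\Delta\varphi_n=0$, which kills the sequence. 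You correctly flag this as the main obstacle but do not resolve it.

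The paper handles \emph{all} $\omega\in\Omega_0$ uniformly, and by a different mechanism: it exploits the block structure (Remark \ref{R:block2D}) and works with third-component-only sequences $u^{(n)}=(0,0,u_3^{(n)})^T$, which satisfy $\nabla\cdot(\tilde W u^{(n)})=0$ automatically since $\nabla=(\partial_{x_1},\partial_{x_2},0)^T$. The crucial trick is to take $u_3^{(n)}=\Delta w^{(n)}$ with $w^{(n)}\in C_c^\infty(\R^2_+)$ a rescaled, translated bump: then for any $\psi\in\ker L(\omega)$ the component $\psi_3$ is harmonic, so $\langle u^{(n)},\psi\rangle=\langle\Delta w^{(n)},\psi_3\rangle=\langle w^{(n)},\Delta\psi_3\rangle=0$, i.e.\ $u^{(n)}\in\ker(L(\omega))^\perp$. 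One then checks $\|L(\omega)u^{(n)}\|\to 0$ and $u^{(n)}\rightharpoonup 0$, and Lemma \ref{L:ran_not-closed} gives that $\Ran L(\omega)$ is not closed, hence $\omega\in\sigma_{e,1}(\cL)$. This simultaneously fixes both issues above: it is domain-admissible regardless of $\tilde W_\pm$, and it supplies exactly the kernel-orthogonal Weyl sequence that Lemma \ref{L:ran_not-closed} demands.
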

	\brem[Block diagonal structure]\label{R:block2D}
	Just like in the 1D case the operator at hand has a block diagonal structure as visible in \eqref{E:T} and \eqref{E:domega}. As a result, the spectrum of $\cL$ is the union of the spectra of $\cL^{(1,2)}$ and $\cL^{(3)}$ in the sense of $\eqref{E:Pspec}$, where
	$$\cL^{(1,2)}:=\bspm -\pa_{x_2}^2 & \pa_{x_1}\pa_{x_2} \\ \pa_{x_1}\pa_{x_2} & -\pa_{x_1}^2 \espm-\lambda \omega^2\perm(x_1,\omega)$$
	and $\cL^{(3)}:=-\pa_{x_1}^2-\pa_{x_2}^2-\lambda \omega^2\perm(x_1,\omega)$ with the domains
	$$
	\begin{aligned}
		D(\cL^{(1,2)}) &:= \{ (u_1, u_2)^T:  u \in \cD_{\omega}\},\\
		D(\cL^{(3)}) &:= \{ u_3:  u \in \cD_{\omega}\}.
	\end{aligned}
	$$
	\erem

	\begin{example}\label{Ex:2D-Drude}
		In Fig. \ref{F:spec2D} we plot the sets $M_+, M_-, N$, $\Omega_0$, and $S$ for the same interface studied in Example \ref{Ex:1D-Drude}, i.e. with \eqref{E:chi}, \eqref{E:Drude} and with the parameters $\perm_0=1, \eta=1, \gamma=2$,  and $c_D$ specified below.
		The sets $\Omega_0$ and $S$ are the same as in Example \ref{Ex:1D-Drude} and $M_+=\R\setminus\{0\}$. For $M_-$ we get
		$$M_-=M^{(0)}=\cup_{k>0}M_-^{(k)}$$
		with $M_-^{(k)}$ from  Example \ref{Ex:1D-Drude}. Hence, for $\omega \in M^{(k)}_-  \cap  \ri\R$,  $\Im(\omega)$ is still bounded below by $-\gamma$.

		In Fig. \ref{F:spec2D} we study three values of the parameter $c_D$: $2\pi , 3.09,$ and $2.26$. The set $N$ consists of three components. The union of the two unbounded components united with $\Omega_0$ is closed in $\C$: for $a\to \infty $ the corresponding solution of \eqref{E:ev.cond-2D} satisfies $\omega \to \infty$ or $\omega\to -\infty$ and for $a\to 0+$ we have $\omega\to \omega_0\in \Omega_0$. On the other hand, the bounded component of $N$ is not closed. For $a\to \infty$  the corresponding solution $\omega(a)$ of \eqref{E:ev.cond-2D} converges to a zero of $\perm_++\perm_-$,
		i.e. to one of the points $\omega_0^\pm:=\frac{1}{2}\left(-\ri\gamma \pm \sqrt{4c_D/(\eta+2)-\gamma^2}\right)$, in such a way that $a(\perm_+(\omega(a))+\perm_-(\omega(a)))\to \omega_0^2\perm_+(\omega_0^\pm)\perm_-(\omega_0^\pm)$ as $a\to \infty$. The points $\omega_0^\pm$ are marked by green circles in  Fig. \ref{F:spec2D}. As a result, the spectrum of $\cL$ is not closed in the $\omega$-plane, similarly to the 1D case in  Example \ref{Ex:1D-Drude}. For $c_D \leq \gamma^2(\eta+2)/4$ the points $\omega_0^\pm$ lie on the imaginary axis and they coincide when
		$c_D =\gamma^2(\eta+2)/4=3$. The case $(c_D,\gamma) = (3.09,2)$ corresponds to $\gamma$ slightly above this threshold.
		Below the threshold the bounded part of $N$ includes a closed curve and a segment on the imaginary axis, which connects $\omega_0^-$ and $\omega_0^+$. The last parameter value $c_D=2.26$ depicted in Fig. \ref{F:spec2D} is such a case.
		\begin{figure}[h!]
			\centering
			\includegraphics[width=0.45\linewidth]{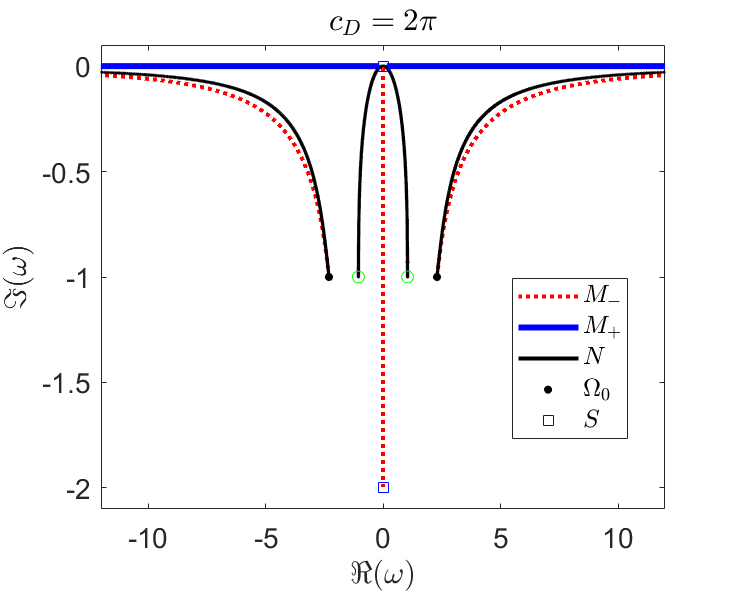}
			\includegraphics[width=0.45\linewidth]{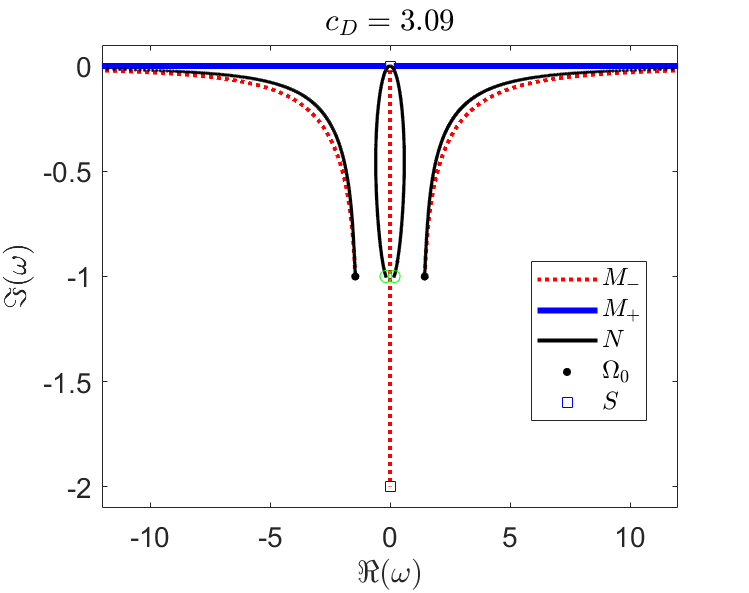}
			\includegraphics[width=0.45\linewidth]{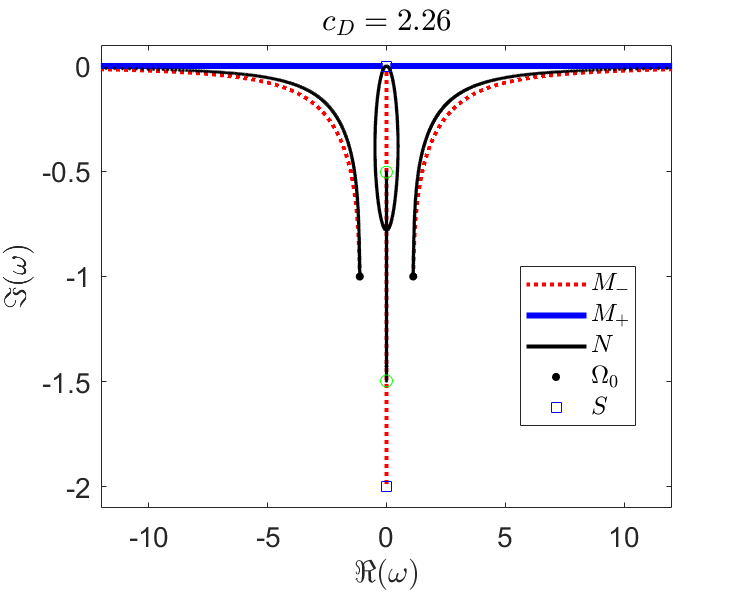}
			\caption{The sets  $M_+, M_-, N$, $\Omega_0$, and $S$ for the 2D Drude setting with \eqref{E:chi} and \eqref{E:Drude}, $\perm_0=1,$ $(\eta, \gamma)=(1,2)$, and $\omega_* \in \{2\pi,3.09,2.26\}$. The green circles mark zeros of  $\perm_++\perm_-$.}
			\label{F:spec2D}
		\end{figure}
	\end{example}
	\begin{example}\label{Ex:2D-Lorentz}
		The case of the Lorentz model \eqref{E:chi}, \eqref{E:Lorentz} in two dimensions is plotted Figure \ref{F:spec2D-Lorentz}. The chosen parameters are  $\perm_0=1, \eta=1, \gamma=1$ and we study different values of $c_L$ and $\omega_*$. Again, the bounded curves in $N$ terminate at zeroes of $\perm_+ + \perm_-$, i.e. at the points $\omega_0^\pm :=-\ri\tfrac{\gamma}{2}\pm (-\tfrac{\gamma^2}{4}+\omega_*^2+\tfrac{c_L}{2+\eta})^{1/2}$, marked by the green circles. The bounded curve in $M_-$ terminates at the singularities in $S$, i.e. $-\ri\tfrac{\gamma}{2}\pm(\omega_*^2-\tfrac{\gamma^2}{4})^{1/2}$.
			For $\omega_*\to\tfrac{\gamma}{2}+$ the two end points meet on the imaginary axis, see the last plot in Fig. \ref{F:spec2D-Lorentz}.

		        In the first three plots we visualize the dependence on $c_L$. There is a critical value of $c_L$ at which the three curves in $N$ connect. As $c_L$ passes through this value the topological structure of $N$ changes.

		Unlike in the Drude example (Example \ref{Ex:2D-Drude}) there is no spectrum on the imaginary axis if $\omega_*>\gamma/2$ (besides the point $\omega=0$), see also Example \ref{Ex:1D-Lorentz}.
		\begin{figure}[h!]
			\centering
			\includegraphics[width=0.45\linewidth]{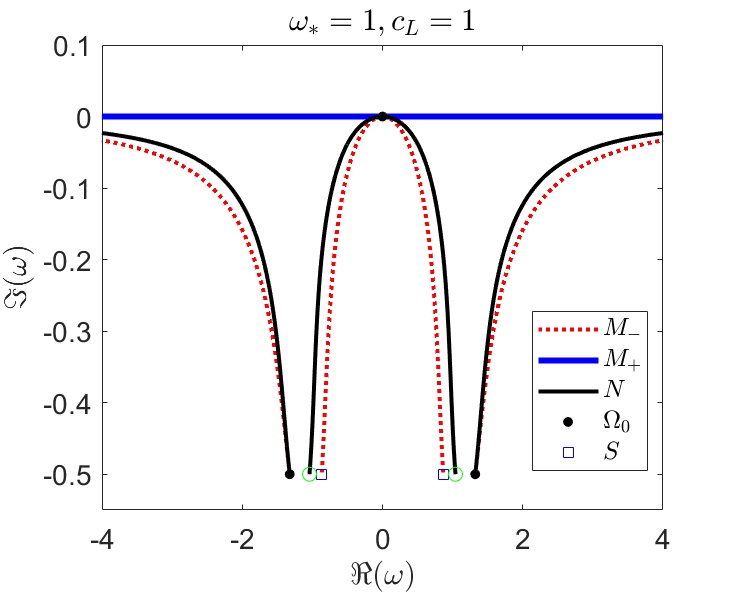}
			\includegraphics[width=0.45\linewidth]{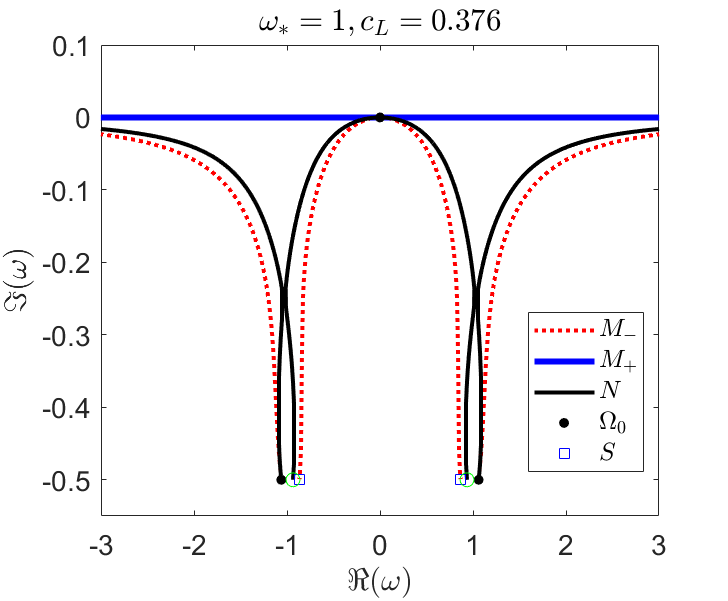}
			\includegraphics[width=0.45\linewidth]{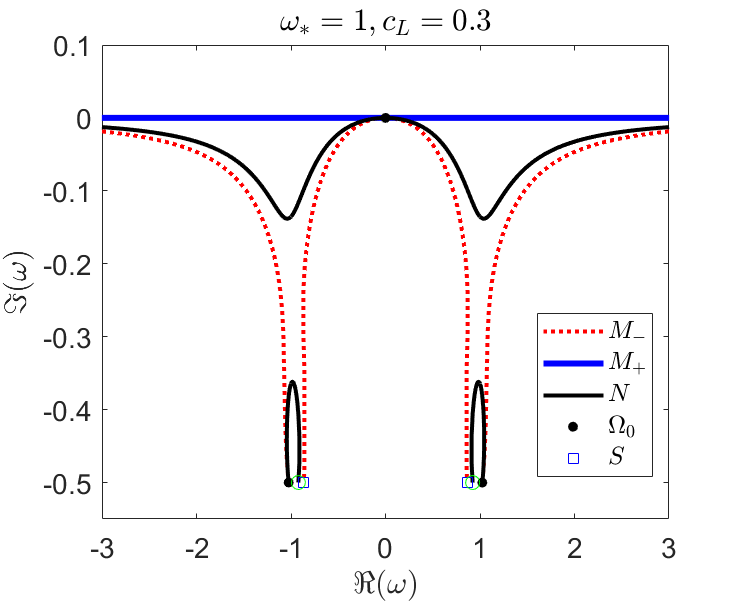}
			\includegraphics[width=0.45\linewidth]{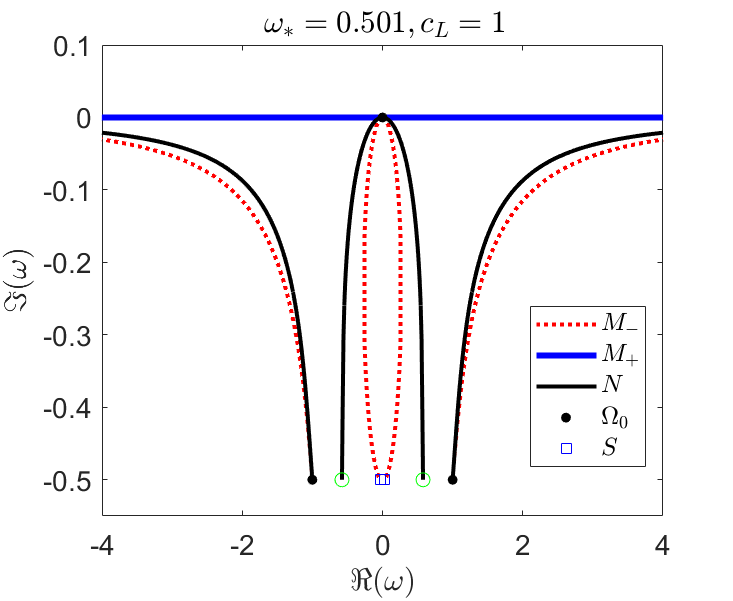}
			\caption{The sets  $M_+, M_-, N$, $\Omega_0$, and $S$ for the 2D Lorentz setting with \eqref{E:chi}, \eqref{E:Lorentz}, $\perm_0=1,$ and $(\eta, \gamma)=(1,1)$. In the first three plots we have $\omega_*=1$ and vary $c_L$ within the set $\{1, 0.376, 0.3\}$. In the last plot we choose $c_L=1$ and $\omega_*=0.501$, i.e. near the threshold $\gamma/2=0.5$. The green circles mark zeros of  $\perm_++\perm_-$.}
			\label{F:spec2D-Lorentz}
		\end{figure}
	\end{example}
	\break

	\section{One-Dimensional Reduction}\label{S:1D}

	In this section we study the problem defined in      \eqref{E:ansatz1D} and \eqref{E:NL-eval}. The rigorous functional analytic setting is given in Sec. \ref{S:main-res-1D}.

	\subsection{Explanation of the Functional Analytic Setting}

	We first note that we have
	\begin{equation}
		\label{E:divk}\nabla_k\cdot(\perm u)=(\perm u_1)'+\ri k\perm u_2,
	\end{equation}
	\begin{equation}\label{E:curlk}
		\nabla_k\times u=\bspm \ri k u_{3} \\ -u_{3}' \\ u_{2}'-\ri k u_{1}\espm \quad \text{and}\quad
		T_k(\pa_{x_1})u=\nabla_k\times \nabla_k\times u=\bspm k^2u_{1} + \ri k u_{2}'\\ \ri k u_{1}'-u_{2}''\\ -u_{3}''+k^2u_{3}\espm,
	\end{equation}
	see \eqref{E:Tk}.
	We are now able to rewrite $\cD_{k,\omega}$ in several ways. For $u\in \cD_{k,\omega}$, we have $u, \nabla_k\times u,
	\nabla_k\times \nabla_k\times u\in L^2(\R,\C^3)$. From \eqref{E:curlk}, we see that this is equivalent to the conditions
	$u_3\in H^2(\R,\C)$, $u_2\in H^1(\R,\C)$ and $u_2'-\ri k u_{1}\in H^1(\R,\C)$. Moreover, from \eqref{E:divk} we see that the divergence condition
	is satisfied precisely when $\perm u_1\in H^1(\R,\C)$ and $(\perm u_1)'+\ri k\perm u_2=0$. Therefore, we obtain
	\beq\label{E:Dkom}
	\cD_{k,\omega} = \{u\in L^2(\R,\C^3): u_3\in H^2(\R,\C), u_2, u_2'-\ri k u_{1}, \perm u_1\in H^1(\R,\C)\text{ and } (\perm u_1)'+\ri k\perm u_2=0\}.
	\eeq

	Next, we reformulate the conditions for lying in $H^1(\R,\C)$ in terms of functions not having a jump across the interface at $x=0$ in the sense of \eqref{E:jump}. This shows that
	\beq
	\begin{aligned}
		\cD_{k,\omega}=&\{u\in L^2(\R,\C^3): \ \nabla_k\times u_\pm, \nabla_k\times \nabla_k\times u_\pm \in L^2(\R_\pm,\C^3), \\
		&\quad \text{the divergence condition} \ \eqref{E:div-cond-1d}, \ \text{and the interface conditions} \ \eqref{E:IFC-1D} \ \text{hold}\},
	\end{aligned}
	\eeq
	\begin{align}
		\perm_\pm(\omega) \nabla_k\cdot u_\pm =\perm_\pm(\omega) (u'_{\pm,1}+\ri ku_{\pm,2}) &=0 \text{ on } \R_\pm,\label{E:div-cond-1d}\\
		\llbracket \perm(\omega) u_1\rrbracket = \llbracket u_2\rrbracket=\llbracket u_3\rrbracket =  \llbracket u_2'-\ri k u_1 \rrbracket = \llbracket u_3'\rrbracket &=0,\label{E:IFC-1D}
	\end{align}
	where we define
	$$u_\pm:= u|_{\R_\pm}, \text{ with } \R_\pm :=\{x\in \R: \pm x>0\}.$$
	For a detailed proof, see Lemma \ref{lem:1Dinterface} in Appendix \ref{App:interf-1D}.

	Also,
	$$\cD_{k,\omega} = \{u \in L^2(\R,\C^3): u_{\pm,3}\in H^2(\R_\pm,\C), u_{\pm,2}, u_{\pm,2}'-\ri k u_{\pm,1}, \perm_\pm u_{\pm,1}\in H^1(\R_\pm,\C)\text{ and }
	\eqref{E:div-cond-1d}, \eqref{E:IFC-1D} \text{ hold}\},$$
	which follows just like in \eqref{E:Dkom}.

	\brem
		Note that using the range space $\cR_k$ (which is a closed subspace of $L^2(\R,\C^3)$), see \eqref{E:Rk}, instead of $L^2(\R,\C^3)$ is necessary for the spectral analysis. Namely, when investigating the resolvent set of the spectral problem, we have to consider the inhomogeneous problem
		\be
		T_k(\pa_{x_1}) u - \omega^2\perm(\omega)u =r,
		\label{eq9}
		\ee
		where $r\in L^2(\R,\C^3)$.  Since, for $u\in \cD_{k,\omega}$ the left hand side  of (\ref{eq9})  is divergence free, we have to impose  the condition $\nabla_k\cdot r=0$ (in the distributional sense)  also to the right hand side of (\ref{eq9}); otherwise, the resolvent set would be empty, implying that  the spectrum would be the whole of $D(\perm)$.

	The functional setting is non-standard as the domain  of the pencil  is not contained within its range space. Indeed, unless  $\perm_+=\perm_-$, there exist $u\in \cD_{k,\omega}$ such that $\nabla_k \cdot u \neq 0$, which can be concluded using only the interface conditions. The functions  $u\in \cD_{k,\omega}$ satisfy $\llbracket \perm u_1\rrbracket =0$ and hence $u_1$ is discontinuous unless $\perm_+=\perm_-$, while  the  functions $f\in \cR_k$ satisfy $\nabla_k\cdot f=0$. Thus $f_1'=-\ri k f_2\in L^2(\R)$, i.e. $f_1\in H^1(\R)$ and $f_1$ is continuous. In summary $\cD_{k,\omega}  \not\subset \cR_k$.
	\erem

	We   equip $\cD_{k,\omega}$ with the inner product
	\beq
	\langle u, v \rangle_{\cD_{k,\omega}}:=\langle u, v \rangle_{L^2}+  \langle \nabla_k \times \nabla_k\times u, \nabla_k \times \nabla_k\times v \rangle_{L^2}, u,v\in \cD_{k,\omega},  \label{E:in-prod-1D}
	\eeq
	which corresponds to the graph norm \eqref{E:graph-norm} in our abstract setting. As we show next, $\cD_{k,\omega}$ equipped with this inner product is a Hilbert space. This result is classical but we provide it for the reader's convenience. (Note that, equivalently, we could show that $A(\omega):\cD_{k,\omega}\to\cR$ is closed.)
	\blem\label{L:HS-1D}
	$(\cD_{k,\omega},\langle\cdot,\cdot\rangle_{\cD_{k,\omega}})$ is a Hilbert space for any $k\in \R$ and $\omega \in D(\perm)$.
	\elem
	\bpf
	First note that the norm
	$$\|v\|:=\left(\|v\|_{L^2(\R)}^2 + \|\nabla_k \times v\|_{L^2(\R)}^2 + \|\nabla_k \times \nabla_k \times v\|_{L^2(\R)}^2 \right)^{1/2}$$
	is equivalent to the norm generated by $\langle \cdot, \cdot\rangle_{\cD_{k,\omega}}$ as (via Lemma \ref{L:PI-1D})
	$$\|\nabla_k \times v\|_{L^2(\R)}^2 = \langle v, \nabla_k \times \nabla_k \times v\rangle_{L^2} \leq \|v\|_{L^2(\R)}\|\nabla_k \times \nabla_k \times  v\|_{L^2(\R)}$$
	for each $v\in \cD_{k,\omega}$. Here, for the application of Lemma \ref{L:PI-1D} we set $u:=\nabla_k\times v$. Due to $v\in \cD_{k,\omega}$ we know that $u, \nabla_k\times u\in L^2$ and hence the assumptions of the lemma are indeed satisfied.

	Let $(u_n)\subset \cD_{k,\omega}$ be a Cauchy sequence. Due to the above norm equivalence we have that $(u_n), (\nabla_k \times u_n)$, and $ (\nabla_k \times \nabla_k \times u_n)$ are Cauchy in $L^2(\R,\C^3)$. Hence there are $u,f,g\in L^2(\R,\C^3)$ such that
	\beq\label{E:un-conv}
	u_n\to u, \nabla_k\times u_n \to f, \ \text{and} \ \nabla_k \times \nabla_k \times u_n \to g \quad \text{in}\quad L^2(\R,\C^3).
	\eeq

	Because
	$$\int_\R u\cdot (\nabla_k\times \phi)\dd x = \lim_{n\to\infty}\int_\R u_n \cdot (\nabla_k\times \phi)\dd x =\lim_{n\to\infty}\int_\R(\nabla_k\times u_n) \cdot  \phi \dd x = \int_\R f\cdot \phi\dd x  $$
	for each $\phi \in C^\infty_c(\R,\C^3)$, we get $\nabla_k\times u = f$. Analogously, we obtain $\nabla_k\times \nabla_k\times u = g$.

	Since
	$$\int_\R (\perm u)\cdot \nabla_k \varphi\dd x = \lim_{n\to\infty}\int_\R (\perm u_n) \cdot \nabla_k\varphi\dd x=0$$
	for each $\varphi \in C^\infty_c(\R,\C)$, we have $\nabla_k\cdot (\perm u)=0$.
	We conclude that $u\in \cD_{k,\omega}$ and due to \eqref{E:un-conv} we have also $u_n\to u$ in $\cD_{k,\omega}$.
	\epf

	\brem\label{R:setting-mu-1d}
	If one allows also the permeability to depend on $x_1$, i.e. replacing $\boldsymbol{\mu}_0$ by $\boldsymbol{\mu}=\boldsymbol{\mu}(x_1)$ in \eqref{E:Maxw-1stord}, then the setting changes. In this case one arrives at the second order equation $\curl(\tfrac{1}{\boldsymbol{\mu}} \curl E)=\omega^2 D$. In the case of $\boldsymbol{\mu}$ being constant on each half line $\R_\pm$, one obtains the old second order equation \eqref{E:Maxw-2ndord} (with a modified $\perm$) on $\R_+$ and $\R_-$ but not on $\R$. The last two interface conditions in \eqref{E:IFC-1D} have to be replaced by  $\llbracket \tfrac{1}{\boldsymbol{\mu}}(u_2'-\ri k u_1) \rrbracket = \llbracket\tfrac{1}{\boldsymbol{\mu}}u_3'\rrbracket =0$.
	\erem

	\subsection{A Subset of the Resolvent Set $\rho^{{\rm red}}(\cL_k)$}

	First, we restrict our attention to the set $\omega \in D(\perm)\setminus \Omega_0$.
	Note that for $\omega \in D(\perm)\setminus \Omega_0$ the divergence condition \eqref{E:div-cond-1d} reduces to
	\beq
	\nabla_k\cdot u_\pm =0 \text{ on } \R_\pm.\label{E:div-cond-1d-red}
	\eeq
	The first step towards the proof of Theorem \ref{T:main1D}  is the following
	\bprop\label{T:resolv-1D}
	$$\sigma^{{\rm red}}(\cL_k)\subset (M^{(k)}_+\cup M^{(k)}_-) \dot{\cup} N^{(k)}=:\cM^{(k)},$$
	where $ M^{(k)}_\pm$ and $ N^{(k)}$ are defined in \eqref{E:Mkpm} and \eqref{E:Nk}.
	\eprop
	\brem
	In other words, the proposition  states that for the resolvent set we have
	$$\rho^{{\rm red}}(\cL_k)\supset D(\perm)\setminus (\Omega_0 \cup \cM^{(k)}).$$
	\erem
	\brem \label{R:Nk-equiv}
	In several of the subsequent calculations,  we will make use of the following equivalent descriptions of $N^{(k)}$.
	$$
	\begin{aligned}
		N^{(k)}=&~\{ \omega \in  D(\perm)\setminus \Omega_0: \omega^2\perm_+(\omega),\omega^2\perm_-(\omega)\notin [k^2,\infty),  \perm_+ (\omega)\sqrt{ k^2-\omega^2\perm_-(\omega)}+ \perm_- (\omega)\sqrt{ k^2-\omega^2\perm_+(\omega)}=0\}.
	\end{aligned}
	$$
	To prove this it suffices to show that, for $ \omega \in  D(\perm)\setminus \Omega_0$ such that $\omega^2\perm_+(\omega),\omega^2\perm_-(\omega)\notin [k^2,\infty)$, equation \eqref{E:ev.cond-1D} and
	\beq\label{E:Nk1}
	\perm_+ (\omega)\sqrt{ k^2-\omega^2\perm_-(\omega)}= -\perm_- (\omega)\sqrt{ k^2-\omega^2\perm_+(\omega)}
	\eeq
	are equivalent.\\
	Clearly \eqref{E:ev.cond-1D} follows from \eqref{E:Nk1} by squaring and using $\perm_+(\omega)\neq \perm_-(\omega)$ which is also a consequence of \eqref{E:Nk1}. Assume now \eqref{E:ev.cond-1D}. Then we get
	$(k^2-\omega^2\perm_+(\omega))(k^2-\omega^2\perm_-(\omega))=k^4$ and thus
	\beq\label{E:Wpm1}
	\sqrt{k^2-\omega^2\perm_-(\omega)}=\sqrt{\frac{k^4}{k^2-\omega^2\perm_+(\omega)}}=k^2\sqrt{\frac{1}{k^2-\omega^2\perm_+(\omega)}}=k^2\frac{1}{\sqrt{k^2-\omega^2\perm_+(\omega)}}
	\eeq
	as $k^2-\omega^2\perm_+(\omega)\notin (-\infty,0]$. At the same time \eqref{E:ev.cond-1D} implies $\perm_-(\omega)(k^2-\omega^2\perm_+(\omega))=-k^2\perm_+(\omega)$ and hence
	\beq\label{E:Wpm2}
	\perm_-(\omega)=-k^2\frac{\perm_+(\omega)}{k^2-\omega^2\perm_+(\omega)}.
	\eeq
	Using \eqref{E:Wpm1} and \eqref{E:Wpm2}, we   obtain
	$$\perm_+ (\omega)\sqrt{ k^2-\omega^2\perm_-(\omega)}+\perm_- (\omega)\sqrt{ k^2-\omega^2\perm_+(\omega)}=k^2\frac{\perm_+(\omega)}{\sqrt{k^2-\omega^2\perm_+(\omega)}}-k^2\frac{\perm_+(\omega)}{k^2-\omega^2\perm_+(\omega)}\sqrt{k^2-\omega^2\perm_+(\omega)}=0,$$
	showing the desired equality of the sets.
	\erem

	\bpf (of Prop. \ref{T:resolv-1D})
	Let $r \in \cR_k$ and $\omega \in  D(\perm)\setminus (\Omega_0\cup \cM^{(k)})$. We need to show the existence of a unique $u\in  \cD_{k,\omega}$ such that
	\be\label{Neq22}
	\left. \begin{array}{rcl}
		(k^2 -\omega^2\perm(\omega))u_1 + \ri k u_2'&=& r_1 \\
		\ri k u_1 ' -u_2 '' - \omega^2\perm(\omega)u_2  &=& r_2 \\
		- u_3 ''  +(k^2 -\omega^2\perm(\omega)) u_3     &=& r_3
	\end{array} \right\} {\rm ~on~} (-\infty,0) {\rm ~and~} (0,\infty).
	\ee

	We solve the first equation for $u_1$:
	\be u_1 =\dfrac{ r_1- \ri k u_2'}{k^2-\omega^2\perm(\omega)} . \label{Neq23} \ee
	By \eqref{E:div-cond-1d-red}, we have $r_1'=-\ri kr_2$  and hence, by (\ref{Neq23})
	\be
	u_1'=-\dfrac{\ri k}{k^2-\omega^2\perm(\omega)}(u''_2+r_2) \ \text{ on } (-\infty,0) \text{ and } (0,\infty).
	\label{eq24}
	\ee
	Transferring this  into the second equation in (\ref{Neq22})  gives
	$$\dfrac{k^2}{k^2-\omega^2\perm(\omega)} (u''_2+r_2)-u''_2-\omega^2\perm(\omega)u_2=r_2, $$
	and using $\omega^2\perm \neq 0$,
	\be  -u''_2+(k^2-\omega^2\perm(\omega))u_2 =r_2 \quad \mbox{ on }(-\infty,0)\mbox{  and } (0,\infty). \label{Neq25}
	\ee

	Next, we apply the variation of parameters formula to \eqref{Neq25} and search for an $L^2(\R)$-solution $u_2$ with $\llbracket u_2\rrbracket =0$.
	We need the following lemma.
	\begin{lemma}\label{Nl2lem}
		Let $\mu \in \C, \Re \mu >0, r \in L^2(\R)$. Then
		\begin{align}
			\psi_1(x)&:=e^{\mu x} \int_x^\infty e^{-\mu t } r(t) \dd t \text{ satisfies } \psi_1 \in L^2(0,\infty) \mbox{ with } \norm{\psi_1}_{L^2(0,\infty)}\leq \dfrac1{\Re \mu}\norm{r}_{L^2}, \label{E:psi1-est}\\
			\psi_2(x)&:=e^{-\mu x} \int_0^x e^{\mu t } r(t) \dd t \text{ satisfies } \psi_2\in L^2(0,\infty) \mbox{ with } \norm{\psi_2}_{L^2(0,\infty)}\leq \dfrac1{\Re \mu}\norm{r}_{L^2}.\label{E:psi2-est}
		\end{align}
	\end{lemma}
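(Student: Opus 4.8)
The plan is to recognise both $\psi_1$ and $\psi_2$ as one-sidedly truncated convolutions of $|r|$ with the exponential kernel $s\mapsto e^{-(\Re\mu)s}$ on $(0,\infty)$, whose $L^1$-norm is exactly $(\Re\mu)^{-1}$; Young's convolution inequality then immediately gives the stated bounds with constant $(\Re\mu)^{-1}$. Since only a trivial kernel computation is involved, I would actually present a self-contained argument based on Cauchy--Schwarz and Tonelli's theorem rather than invoking Young.

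For $\psi_1$ I would first note that, since $t\mapsto e^{-\mu t}$ is in $L^2(0,\infty)$ and $r\in L^2$, the integral defining $\psi_1(x)$ converges absolutely for every $x$, and one has the pointwise bound $|\psi_1(x)|\le\int_x^\infty e^{-(\Re\mu)(t-x)}|r(t)|\dd t$. Writing $e^{-(\Re\mu)(t-x)}=e^{-(\Re\mu)(t-x)/2}\,e^{-(\Re\mu)(t-x)/2}$ and applying Cauchy--Schwarz in $t$ gives
\[
|\psi_1(x)|^2\le\frac{1}{\Re\mu}\int_x^\infty e^{-(\Re\mu)(t-x)}|r(t)|^2\dd t .
\]
Integrating over $x\in(0,\infty)$, interchanging the order of integration by Tonelli's theorem (all integrands being nonnegative and measurable), and using $\int_0^t e^{-(\Re\mu)(t-x)}\dd x\le(\Re\mu)^{-1}$ yields $\norm{\psi_1}_{L^2(0,\infty)}^2\le(\Re\mu)^{-2}\norm{r}_{L^2}^2$, which is \eqref{E:psi1-est}; the finiteness of the right-hand side simultaneously shows $\psi_1\in L^2(0,\infty)$.

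The estimate \eqref{E:psi2-est} for $\psi_2$ is entirely parallel: here $|\psi_2(x)|\le\int_0^x e^{-(\Re\mu)(x-t)}|r(t)|\dd t$, Cauchy--Schwarz gives $|\psi_2(x)|^2\le(\Re\mu)^{-1}\int_0^x e^{-(\Re\mu)(x-t)}|r(t)|^2\dd t$, and after integrating in $x$ over $(0,\infty)$ and swapping the order of integration the inner integral $\int_t^\infty e^{-(\Re\mu)(x-t)}\dd x$ equals exactly $(\Re\mu)^{-1}$, so that $\norm{\psi_2}_{L^2(0,\infty)}\le(\Re\mu)^{-1}\norm{r}_{L^2}$. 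I do not expect any genuine obstacle here; the only points deserving a word of care are the application of Tonelli's theorem and the observation that the hypothesis $\Re\mu>0$ is used precisely to guarantee integrability of the exponential kernel, hence finiteness of all of the above integrals.
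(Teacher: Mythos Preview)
Your argument is correct and is in fact considerably cleaner than the paper's. The paper does \emph{not} use the Cauchy--Schwarz\,/\,Tonelli (i.e.\ Young's inequality) route you take. Instead, setting $\alpha:=\Re\mu$, it bounds $\|\psi_1\|_{L^2(0,x_0)}^2$ by $\int_0^{x_0}e^{2\alpha x}\big(\int_x^\infty e^{-\alpha t}|r(t)|\dd t\big)^2\dd x$, integrates by parts in $x$ (differentiating the squared inner integral), and then estimates the resulting cross term $I(x_0)$ via Cauchy--Schwarz followed by a weighted AM--GM with a free parameter $\lambda$; rearranging, choosing $\lambda=\alpha/2$, and letting $x_0\to\infty$ yields the same constant $(\Re\mu)^{-1}$. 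Your method identifies the operator as convolution with an $L^1$ kernel of mass $(\Re\mu)^{-1}$ and is the standard, shortest proof; the paper's integration-by-parts bootstrap achieves nothing extra here (same constant, no additional structural information), so your approach is genuinely preferable. The only minor point to make explicit is that after Tonelli the $t$-integration runs over $(0,\infty)$ only, which you implicitly use when bounding by $\|r\|_{L^2(\R)}$; this is of course harmless since $\|r\|_{L^2(0,\infty)}\le\|r\|_{L^2(\R)}$.
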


	\begin{proof}
		The proof is a simple application of Young's inequality for convolutions $\|f*g\|_{L^2(\R)}\leq \|f\|_{L^1(\R)}\|g\|_{L^2(\R)}$.  For \eqref{E:psi1-est} one chooses $f(t):=\chi_{\R_-}(t)e^{\mu t}$ and $g(t):=\chi_{\R_+}(t)r(t)$. And for \eqref{E:psi2-est} one takes $f(t):=\chi_{\R_+}(t)e^{-\mu t}$ and $g(t):=\chi_{\R_+}(t)r(t)$. In both cases one uses $\|g\|_{L^2(\R)}\leq \|r\|_{L^2(\R)}$.
	\end{proof}

	Below we use the notation
	$$\mu_\pm:=\sqrt{k^2-\omega^2\perm_\pm},$$
	where the $\omega$ dependence of $\perm_\pm$ and $\mu_\pm$ has been suppressed. Note that $\Re \mu_\pm >0$ because $\omega \notin M^{(k)}_\pm$. The variation of parameters formula applied to   \eqref{Neq25} together with the $L^2-$condition  for $u_2$ in \eqref{E:Dkomega} and the second interface condition $\llbracket u_2\rrbracket =0$ in (\ref{E:IFC-1D}) yield
	\begin{eqnarray}
		u_2(x_1) \hspace{-0.6cm}&& =\left[ C_2(k) -\dfrac1{2\mu_+} \int_0^\infty e^{-\mu_+t} r_2(t)\dd t \right] e^{-\mu_+x_1} \nonumber \\
		&&+ \dfrac1{2\mu_+} \left[ e^{\mu_+x_1} \int_{x_1}^\infty  e^{-\mu_+t}r_2(t)\dd t + e^{-\mu_+x_1} \int_0^{x_1}  e^{\mu_+t}r_2(t)\dd t \right]  \mbox{ for }x_1 \in (0,\infty), \label{E:u2plus}\\
		u_2(x_1) \hspace*{-0.6cm} &&= \left[ C_2(k) -\dfrac1{2\mu_-} \int_{-\infty}^0 e^{\mu_-t} r_2(t)\dd t \right] e^{\mu_-x_1} \nonumber \\
		&&+ \dfrac1{2\mu_-} \left[ e^{\mu_-x_1} \int_{x_1}^0e^{-\mu_-t}r_2(t)\dd t + e^{-\mu_-x_1} \int_{-\infty}^{x_1}  e^{\mu_-t}r_2(t)\dd t \right]   \mbox{ for }x_1 \in (-\infty,0) , \label{E:u2minus}
	\end{eqnarray}
	with $C_2(k)\in \C$ denoting a free constant. We track the $k-$dependence of the constants for the purposes of the two-dimensional case in Sec. \ref{S:2D}. Now (\ref{Neq23}) gives
	\begin{eqnarray}
		&&u_1(x_1) =\dfrac1{\mu_+^2}   \Big\{
		r_1 (x_1)  +\ri k\mu_+  \Big[ C_2(k) -\dfrac1{2\mu_+} \int_0^\infty e^{-\mu_+t} r_2(t)\dd t \Big] e^{-\mu_+x_1} \nonumber \\
		&&-\dfrac12 \ri k \Big[     e^{\mu_+x_1} \int_{x_1}^\infty  e^{-\mu_+t}r_2(t)\dd t -e^{-\mu_+x_1} \int_0^{x_1}  e^{\mu_+t}r_2(t)\dd t \Big] \Big\}  \quad \mbox{ for }x_1 \in (0,\infty) , \label{E:u1plus}\\
		&&u_1(x_1) =\frac{1}{\mu_-^2} \Big\{   r_1(x_1)-  \ri k \mu_- \Big[ C_2(k) -\dfrac1{2\mu_-} \int_{-\infty}^0 e^{\mu_-t} r_2(t)\dd t \Big] e^{\mu_-x_1} \nonumber \\
		&&-\dfrac12 \ri k    \Big[ e^{\mu_-x_1} \int_{x_1}^0e^{-\mu_-t}r_2(t)\dd t - e^{-\mu_-x_1} \int_{-\infty}^{x_1}  e^{\mu_-t}r_2(t)\dd t \Big]  \Big\} \quad  \mbox{ for }x_1 \in (-\infty,0). \label{E:u1minus}
	\end{eqnarray}
	Next, we ensure that the first interface condition in \eqref{E:IFC-1D} is satisfied. Note that $r\in \cR_k$ implies $r_1\in H^1(\R)$ and hence $r_1$ is continuous. Equations (\ref{E:u1plus})  and (\ref{E:u1minus})  imply that
	\beq \label{E:u1-0pm}
	\begin{aligned}
		& u_1(0+)=  \dfrac1{\mu_+} \ri k C_2(k)  + \dfrac1{\mu_+^2} \Big[  r_1(0)-\ri k \int_0^\infty e^{-\mu_+t} r_2(t)\dd t \Big],\\
		& u_1(0-)= - \dfrac1{\mu_-} \ri k C_2(k)  + \dfrac1{\mu_-^2} \Big[  r_1(0)+\ri k \int_{-\infty}^0 e^{\mu_-t} r_2(t)\dd t \Big].
	\end{aligned}
	\eeq
	Noting that $\frac{\perm_+}{\mu_+} + \frac{\perm_-}{\mu_-} = \frac{1}{\mu_+
		\mu_-} \Big[ \perm_+ \mu_- + \perm_- \mu_+ \Big] \neq 0$ by the assumption $\omega\notin N^{(k)}$ (see Remark \ref{R:Nk-equiv}),
	the first interface condition in \eqref{E:IFC-1D}  gives
	\begin{eqnarray}\label{E:C2}
		\hspace{-0.6cm}
		&C_2(k)=\dfrac1{\frac{\perm_+}{\mu_+} +\frac{\perm_-}{\mu_-}}   \Big[ \hspace{-0.1cm}  \underbrace{ \Big( \dfrac{\perm_-} {\mu_-^2} -\dfrac{\perm_+} {\mu_+^2 } \Big)
		}_{
			=\frac{1}{\mu_+^2\mu_-^2} k^2  (\perm_- - \perm_+) } \hspace{-0.1cm}
		\frac{r_1(0) }{\ri k}
		+\dfrac{ \perm_+}{\mu_+^2}  \int_0^{\infty} e^{-\mu_+t}r_2(t)\dd t
		+\dfrac{\perm_-}{\mu_-^2}  \int_{-\infty}^0 e^{\mu_-t}r_2(t)\dd t \Big]  \nonumber \\
		& =\dfrac1{ \mu_+ \mu_- (\perm_+\mu_-+\perm_-\mu_+)}  \Big[ \ri k (\perm_+ - \perm_-)r_1(0)+\perm_+ \mu_-^2 \int_0^\infty e^{ -\mu_+t}r_2 (t)\dd t + \perm_- \mu_+^2 \int_{-\infty}^0 e^{ \mu_- t} r_2 (t)\dd t \Big]
	\end{eqnarray}
	provided $k\neq 0$. If $k=0$, then $C_2(k)$ is arbitrary since \eqref{E:u1-0pm} gives $u_1(0+)=-r_1(0)/(\omega^2\perm_+(\omega))$ and $u_1(0-)=-r_1(0)/(\omega^2\perm_-(\omega))$, which implies the interface condition.

	Finally, the differential equation for $u_3$ in (\ref{Neq22}) has the same form as (\ref{Neq25}).  Hence, together with the $L^2-$condition for $u_3$ in (\ref{E:Dkomega}) and the third interface condition $ \llbracket u_3\rrbracket =0$ in (\ref{E:IFC-1D}) we obtain (\ref{E:u2plus}), (\ref{E:u2minus}) also for $u_3$, with a new constant $C_3(k)$  and $r_3$ instead of $r_2$ on the right-hand side. Next, the interface condition $ \llbracket u_3'\rrbracket =0$ reads
	$$
	\begin{aligned}
		&-\mu_+ \Big[ C_3(k)-\dfrac1{2\mu_+}\int_0^\infty e^{-\mu_+t}r_3(t)dt \Big]+ \dfrac12\int_0^\infty e^{-\mu_+t} r_3(t)dt\\
		&=\mu_-\Big[ C_3(k)-\dfrac1{2\mu_-}\int_{-\infty}^0 e^{\mu_-t}r_3(t)dt \Big] - \dfrac12\int_{-\infty}^0 e^{\mu_-t }r_3(t)dt,
	\end{aligned}
	$$
	which gives, noting that $\mu_+ + \mu_- \neq 0$ since $\Re \mu_{\pm} > 0$,
	\be
	C_3(k) =\dfrac1{\mu_+ + \mu_-} \Big[ \int_0^\infty e^{-\mu_+t }r_3(t)dt +  \int_{-\infty}^0 e^{\mu_-t }r_3(t)dt \Big].
	\label{Neq31}
	\ee
	The expressions (\ref{E:u2plus}), (\ref{E:u2minus}) (also for $u_3$), (\ref{E:u1plus}), (\ref{E:u1minus}), (\ref{E:C2}), and (\ref{Neq31}) uniquely determine   the function $u\in L^2(\R,\C^3)$ which satisfies the differential equations (\ref{Neq22}) and all the interface conditions (\ref{E:IFC-1D}) except for $\llbracket u_2'-\ri k u_1 \rrbracket=0$.  We are left to show that this $u$  also satisfies the divergence condition \eqref{E:div-cond-1d}, the last interface condition $\llbracket u_2'-\ri k u_1 \rrbracket=0$, the $L^2$ conditions in (\ref{E:Dkomega}), and the estimate
	\beq
	\|u\|_{L^2(\R)}\leq C\|r\|_{L^2(\R)}  \mbox{ with } C  \mbox{ independent of } r. \label{Neq32}
	\eeq
	For this purpose, we first note that due to Lemma \ref{Nl2lem}, for each $\varphi \in L^2(\R,\C)$ and each $\mu\in \C$ such that $\Re \mu>0$,
	\be
	\norm{e^{\mu \cdot} \int_\cdot^\infty e^{-\mu t} \varphi(t)\dd t}_{L^2(0,\infty)},
	\norm{e^{-\mu \cdot} \int_0^\cdot e^{\mu t}\varphi(t)\dd t}_{L^2(0,\infty)}\leq
	\dfrac1{\Re \mu}\norm{\varphi}_{L^2(0,\infty)}, \label{Neq33}
	\ee
	and also
	\be
	\norm{e^{\mu \cdot}\int_\cdot^0e^{-\mu t } \varphi(t)\dd t}_{L^2(-\infty,0)},
	\norm{e^{-\mu \cdot}\int_{-\infty} ^\cdot e^{\mu t } \varphi(t)\dd t}_{L^2(-\infty,0)}
	\leq \dfrac1{\Re \mu}\norm{\varphi}_{L^2(-\infty,0)}. \label{Neq34}
	\ee
	Consequently, (\ref{E:u2plus}), (\ref{E:u2minus}) show that
	\begin{align}
		\norm{  u_{2,3}}_{L^2(0,\infty)} &\leq \dfrac1{ \sqrt{2 \Re \mu_+}} \Big[ |C_{2,3}(k)|+\left ( \dfrac1{2 \Re \mu_+}\right)^{3/2} \norm{ r_{2,3}}_{L^2(0,\infty)} \Big] +\dfrac1{(\Re \mu_+)^2 }\norm{  r_{2,3}}_{L^2(0,\infty)} \nonumber \\
		&\leq \dfrac1{\sqrt{2\Re \mu_+}} | C_{2,3}|(k) + \dfrac5{4(\Re \mu_+)^2}\norm{r_{2,3}}_{L^2(0,\infty)},\label{E:u23pl-est} \\
		\norm{  u_{2,3}}_{L^2(-\infty,0)} &\leq \dfrac1{ \sqrt{2\Re \mu_-}} \Big[ |C_{2,3}(k)| + \left ( \dfrac1{2 \Re \mu_-}\right  )^{3/2}\norm{ r_{2,3}}_{L^2(-\infty,0)} \Big] +\dfrac1{(\Re \mu_-)^2 }\norm{  r_{2,3}}_{L^2(-\infty,0)} \nonumber \\
		&\leq \dfrac1{\sqrt{2\Re \mu_-}} | C_{2,3}(k)| + \dfrac5{4(\Re \mu_-)^2}\norm{r_{2,3}}_{L^2(-\infty,0)},\label{E:u23min-est}
	\end{align}
	Similarly, from (\ref{E:u1plus}), (\ref{E:u1minus}) we get $u_1\in L^2(\R)$ and
	\begin{align}
		\norm{u_1}_{L^2(0,\infty)}&\leq \dfrac1{(\Re \mu_+)^2}\norm{r_1}_{L^2(0,\infty)} + \dfrac{|k|}{\sqrt{2} (\Re \mu_+)^{3/2}} \Big[ |C_2(k)| + \dfrac1{(2 \Re \mu_+)^{3/2}} \norm{r_2}_{L^2(0,\infty)} \Big] +\dfrac{|k|}{(\Re \mu_+)^3} \norm{r_2}_{L^2(0,\infty)}\nonumber \\
		&\leq \frac{|k|}{\sqrt{2} (\Re \mu_+)^{3/2}}  |C_2(k)| + \dfrac1{(\Re \mu_+)^2} \norm{r_1}_{L^2(0, \infty)}  + \frac{5|k|}{4(\Re \mu_+)^3}  \norm{r_2}_{L^2(0,\infty)} \label{E:u1pl-est}
	\end{align}
	and also
	\beq
	\norm{u_1}_{L^2(-\infty,0)}\leq \frac{|k|}{\sqrt{2} (\Re \mu_-)^{3/2}}  |C_2(k)| + \dfrac1{(\Re \mu_-)^2} \norm{r_1}_{L^2(-\infty,0)}  + \frac{5|k|}{4(\Re \mu_-)^3}  \norm{r_2}_{L^2(-\infty,0)} \label{E:u1min-est}.
	\eeq
	Our next task is to bound $|C_2(k)|$  and $|C_3(k)|$. We have for any any $\alpha > 0$,
	\begin{eqnarray*}
		r_1(0) &=& - \int\limits_0^{\infty} \pd{1} \left[ e^{-\alpha x_1} r_1 (x_1) \right] \dd x_1 \quad ( {\rm note:~ } r_1 \in H^1 (\R) )\\
		&=& \alpha \int\limits_0^{\infty} e^{- \alpha x_1} r_1 (x_1) \dd x_1 - \int\limits_0^{\infty} e^{- \alpha x_1} \underbrace{\pd{1} r_1}_{= - \ri k r_2}  (x_1) \dd x_1
	\end{eqnarray*}
	and therefore
	\begin{eqnarray*}
		&&|r_1(0) | \le \frac{\alpha}{\sqrt{2 \alpha}} \| r_1 \|_{L^2 (0,\infty)} + \frac{|k|}{\sqrt{2 \alpha}} \| r_2\|_{L^2 (0,\infty)}.
	\end{eqnarray*}
	Choosing $\alpha := |k| + 1$ (a useful choice for the two-dimensional case in Sec. \ref{S:2D}), we obtain
	\beq\label{E:rho1_0_est}
	|r_1(0)| \le \sqrt{|k| + 1} \big( \| r_1\|_{L^2 (0,\infty)} + \| r_2\|_{L^2 (0,\infty)} \big).
	\eeq
	Consequently, by (\ref{E:C2}), (\ref{Neq33}), and (\ref{Neq34})
	\begin{align}
		|C_2(k)| &\le \frac{1}{| \mu_+| \; |\mu_-| \;  |\perm_+ \mu_- +\perm_- \mu_+|} \Big[ |k|  \; |\perm_+ - \perm_-| \;  |r_1(0)| \nonumber \\
		&\hspace*{1cm} + |\perm_+ | \;  | \mu_-|^2 \frac{1}{\sqrt{2 \Re \mu_+}} \| r_2 \|_{L^2(0,\infty)} + | \perm_- | \; | \mu_+ |^2 \frac{1}{\sqrt{2 \Re \mu_-}} \| r_2\|_{L^2(-\infty,0)}\Big] \label{E:C2-est1} \\
		& \le \tilde{C}_2(k) \Big(\|r_1\|_{L^2(\R)}+\|r_2\|_{L^2(\R)} \Big)  \label{E:C2-est}
	\end{align}
	with $\tilde{C}_2(k)>0$. Similarly, by (\ref{Neq31}), (\ref{Neq33}), and (\ref{Neq34})
	\begin{align}
		|C_3(k)| &\leq \dfrac1{\Re(\mu_+ +\mu_-)} \Big[  \dfrac1{\sqrt{2 \Re \mu_+}}\norm{r_3}_{L^2(0,\infty)}+ \dfrac1{\sqrt{2 \Re \mu_-}}\norm{r_3}_{L^2(-\infty,0)} \Big] \label{E:C3-est1} \\
		& \leq\tilde{C}_3(k)  \Big( \norm{ r_3}_{L^2(0,\infty)}+\norm{ r_3}_{L^2(-\infty,0)} \Big).\label{E:C3-est}
	\end{align}
	with $\tilde{C}_3(k)>0$.

	Now \eqref{E:u23pl-est}-\eqref{E:u1min-est}, \eqref{E:C3-est}, and \eqref{E:C2-est} show that \eqref{Neq32} holds true.

	Next, we prove the remaining $L^2$ conditions in (\ref{E:Dkomega}), i.e. $\nabla_k\times u_\pm, \nabla_k\times \nabla_k\times u_\pm \in L^2(\R_\pm,\C^3)$. The latter follows directly from the differential equations \eqref{Neq22}. For the former it suffices to show $u_3', u_2'\in L^2(\R_\pm)$. First, (\ref{E:u2plus}) (also for $u_3$) gives
	\beq\label{E:pa1_u23-est}
	\begin{aligned}
		|u_{2,3}'(x_1)|\leq & \left [ |\mu_+||C_{2,3} (k)| +
		\dfrac1{2\sqrt{ 2 \Re \mu_+}}
		\norm{ r_{2,3}}_{L^2(0,\infty)} \right ] e^{-(\Re \mu_+)x_1}\\
		&+\dfrac12 \left| e^{\mu_+x_1} \int_{x_1}^\infty  e^{-\mu_+t} r_{2,3}(t)\dd t
		- e^{-\mu_+x_1}\int_{0}^{x_1}  e^{\mu_+t} r_{2,3}(t)\dd t \right|
	\end{aligned}
	\eeq
	and hence by (\ref{Neq33}), (\ref{Neq34}), we get  $u_{2,3}'\in L^2(0,\infty)$. Analogously we have    $u_{2,3}'\in L^2(-\infty,0)$.

	Next  we show that the divergence condition $\nabla_k\cdot u_\pm=0$ on $\R_\pm$ holds automatically due to the differential equation.
	$$u_\pm=\frac{1}{\omega^2\perm_\pm}\left(\nabla_k\times \nabla_k\times u_\pm -r_\pm\right), \text{ where } r_\pm:=r|_{\R_\pm}.$$
	As $\nabla_k\cdot r_\pm=0$ for any $r \in \cR_k$ and as $\nabla_k\cdot(\nabla_k\times f)=0$ for any $f$, we get $\nabla_k\cdot u_\pm=0$.

	Finally, it remains to show  $\llbracket u_2'-\ri k u_1 \rrbracket =0$. For $k\neq 0$ this follows from the first equation in \eqref{Neq22} and from $r\in\cR_k$. More precisely, we have
	$$u_2'-\ri k u_1 =\frac{1}{\ri k}(\omega^2\perm u_1+r_1) \text{ if } k \neq 0 \qquad \text{on} \  \R_\pm,$$
	where $Wu_1$ is continuous at $x_1=0$ due to $u\in \cD_{k,\omega}$ and the continuity of $r_1$ follows from $r_1\in H^1(\R)$, which is guaranteed by $r \in L^2(\R,\C^3)$ and the divergence condition $r_1'+\ri k r_2=0$.

	For $k=0$ the condition $\llbracket u_2'-\ri k u_1 \rrbracket =0$ reduces to the continuity of $u_2'$. From \eqref{E:u2plus} and \eqref{E:u2minus} we receive
	$$
	u_2'(0-)=\mu_-C_2(0)-\int_{-\infty}^0 e^{\mu_-t}r_2(t)\dd t  \quad \text{and} \quad u_2'(0+)=-\mu_+C_2(0)+\int_0^{\infty} e^{-\mu_+t}r_2(t)\dd t, $$
	implying that $\llbracket u_2'\rrbracket =0$ is equivalent to
	$$C_2(0)=\frac{1}{\mu_++\mu_-}\left(\int_{-\infty}^0 e^{\mu_-t} r_2(t)\dd t+\int_0^{\infty} e^{-\mu_+t} r_2(t)\dd t\right),$$
	which determines the value of $C_2$ at $k=0$, which was left free in \eqref{E:C2}.
	\epf

	\subsection{Proof of \eqref{E:pt-sp1D} in Theorem \ref{T:main1D}: Eigenvalues of $\cL_k$}\label{S:evals-1D}

	The next result determines the set of eigenvalues of $\cL_k$ for any $k\in \R$, i.e. it shows \eqref{E:pt-sp1D} and the statement on the simplicity of eigenvalues.

	First, we recall the definition of $N^{(k)}$ (see \eqref{E:Nk})
		\beq\label{E:Nk-b}
		N^{(k)}= ~\{ \omega \in  D(\perm)\setminus \Omega_0: \omega^2\perm_+(\omega),\omega^2\perm_-(\omega)\notin [k^2,\infty) \text{ and } \eqref{E:ev.cond-1D-b} \text{ holds}  \}, k\in \R,
		\eeq
		\beq\label{E:ev.cond-1D-b}
		k^2(\perm_+(\omega)+\perm_-(\omega))=\omega^2\perm_+(\omega)\perm_-(\omega).
		\eeq
		In Remark \ref{R:Nk-equiv} we showed that \eqref{E:ev.cond-1D-b}  is equivalent to
		\beq\label{E:om-ev-cond}
		\perm_+(\omega)\sqrt{k^2-\omega^2\perm_-(\omega)}=-\perm_-(\omega)\sqrt{k^2-\omega^2\perm_+(\omega)}.
		\eeq

	\bprop\label{T:pt-spec-1D}
	Let $k \in \R$.
	\beq\label{E:pt-sp1Dknz-omnz}
	\begin{aligned}
		\sigma_p^{{\rm red}}(\cL_k) = N^{(k)}.
	\end{aligned}
	\eeq
	For $k=0$ we get
	\beq\label{E:N0empty}
	\begin{aligned}
		N^{(0)}=\emptyset.
	\end{aligned}
	\eeq
	Moreover, all eigenvalues in $\sigma_p^{{\rm red}}(\cL_k)$ are geometrically and algebraically simple.
	\eprop

	\brem\label{R:Wp-neq-pWm}
	Note that for $\omega  \in \sigma_p(\cL_k) \setminus \Omega_0$, neither $\perm_+(\omega)=\perm_-(\omega)$ nor $\perm_+(\omega)=-\perm_-(\omega)$ is possible. Indeed, if $\perm_+(\omega)=\perm_-(\omega)$ and $\omega \notin \Omega_0,$ then $\perm_+(\omega)=\perm_-(\omega)\neq 0$ and equation \eqref{E:om-ev-cond} implies $\sqrt{k^2-\omega^2\perm_+(\omega)}=-\sqrt{k^2-\omega^2\perm_+(\omega)}$, i.e. $\omega^2\perm_+(\omega)=k^2$, which contradicts \eqref{E:Nk-b}. On the other hand if $\perm_+(\omega)=-\perm_-(\omega)$ and $\omega \notin \Omega_0,$ then  $\perm_+(\omega)=-\perm_-(\omega)\neq 0$ and equation \eqref{E:om-ev-cond} implies $\sqrt{k^2+\omega^2\perm_+(\omega)}=\sqrt{k^2-\omega^2\perm_+(\omega)}$, i.e. $\omega^2\perm_+(\omega)=0$, a contradiction with $\omega\notin \Omega_0$.
	\erem

	\bpf
	We need the following two lemmas. As a first step we prove that all $\cD_{k,\omega}$-solutions of the differential equation have a vanishing third component.
	\blem\label{L:psi3_zero}
	Let $k\in\R, \omega\in D(\perm)$, $\omega^2\perm_\pm(\omega)\notin[k^2,\infty)$ and let $\psi\in \cD_{k,\omega}$ be a solution of $~T_k\psi-\omega^2\perm(\omega)\psi=0$. Then $\psi_3 = 0$.
	\elem
	\bpf
	From the third equation in $T_k\psi-\omega^2\perm(\omega)\psi(\omega)=0$ we get
	$$\pa_{x_1}^2\psi_3 = (k^2-\omega^2\perm(\omega))\psi_3, \quad  x_1\in \R\setminus \{0\},$$
	which implies $\psi_3(x_1)=c_\pm e^{\mp \sqrt{k^2-\omega^2\perm_\pm(\omega)}x_1}$
	for $\pm x_1>0$, where $c_\pm \in \C$.
	Note that $\psi_3\in L^2(\R)$ because $\Re(\sqrt{k^2-\omega^2\perm_\pm(\omega)})>0$.

	The interface condition $\llbracket \psi_3\rrbracket =0$ implies $c_+=c_-$ and because $\Re(\sqrt{k^2-\omega^2\perm_\pm(\omega)})\neq 0$, the condition $\llbracket \pa_{x_1}\psi_3\rrbracket=0$ yields $c_+=c_-=0$ as $-\sqrt{k^2-\omega^2\perm_+(\omega)}=\sqrt{k^2-\omega^2\perm_-(\omega)}$ is impossible.
	\epf

	\blem\label{L:efn-1D}
	Let $k\in\R$ and $\omega\in D(\perm)\setminus \Omega_0$. A non-trivial solution $\psi\in \cD_{k,\omega}$ of $~T_k\psi-\omega^2\perm(\omega)\psi=0$ exists if and only if $k\neq 0$, $k^2-\omega^2\perm_+ \notin (-\infty,0]$, $k^2-\omega^2\perm_- \notin (-\infty,0]$ and \eqref{E:om-ev-cond} holds. Up to a normalization it has the form
	\beq\label{E:psi12_form}
	\bspm \psi_1\\ \psi_2\espm(x_1)=v_\pm e^{\mp \mu_\pm x_1}, \psi_3=0, \quad \pm x_1>0
	\eeq
	with
	\beq\label{E:lam_pm}
	\mu_\pm = \sqrt{k^2-\omega^2\perm_\pm(\omega)}
	\eeq
	and
	\beq\label{E:vp_vm}
	v_+=\frac{\mu_-}{\mu_+}\bspm\ri k\\\mu_+\espm, \quad v_-=\bspm-\ri k\\ \mu_-\espm.
	\eeq
	\elem
	\bpf
	That    $\psi_3=0$ follows from Lemma \ref{L:psi3_zero}. Let   $k\in \R\setminus \{0\}$. The first two equations in $T_k\psi-\omega^2\perm(\omega)\psi=0$ can be rewritten as
	\beq\label{E:psi12-eq}
	\pa_{x_1}\begin{pmatrix} \psi_1\\ \psi_2\end{pmatrix}  = \begin{pmatrix}  0 & -\ri k\\ \frac{\omega^2\perm_\pm-k^2}{\ri k} & 0\end{pmatrix} \begin{pmatrix} \psi_1\\ \psi_2\end{pmatrix}=:M_\pm \begin{pmatrix} \psi_1\\ \psi_2\end{pmatrix}, \quad \pm x_1>0,
	\eeq
	where the second equation follows directly from the first equation in $T_k\psi-\omega^2\perm(\omega)\psi=0$ and the first equation is obtained by combining the second and the differentiated first equation in $T_k\psi-\omega^2\perm(\omega)\psi=0$. This process is moreover reversible, hence \eqref{E:psi12-eq} is equivalent to $T_k\psi-\omega^2\perm(\omega)\psi=0$ (with $\psi_3=0$).

	The eigenpairs of the matrix $M_+$ are $(\mu_+,(-\ri k, \mu_+)^T)$ and $(-\mu_+,(\ri k, \mu_+)^T)$ and the eigenpairs of $M_-$ are $(\mu_-,(-\ri k, \mu_-)^T)$ and $(-\mu_-,(\ri k, \mu_-)^T)$. Hence, all $L^2(\R)$-solutions are given by
	$$
	\bspm \psi_1\\ \psi_2 \espm(x_1)=\begin{cases} A\bspm \ri k\\ \mu_+\espm e^{-\mu_+ x_1},  \ x_1>0,\\
		B\bspm -\ri k\\ \mu_-\espm e^{\mu_- x_1},  \ x_1<0
	\end{cases}
	$$
	with $\Re(\mu_\pm)>0$, i.e. $k^2-\omega^2\perm_\pm \notin (-\infty,0].$ The $L^2$-property of $\nabla_k\times \psi$ and $\nabla_k\times\nabla_k\times \psi$ on $\R_\pm$ is obvious from the exponential form of $\psi$. The divergence condition \eqref{E:div-cond-1d} follows directly from the first equation in \eqref{E:psi12-eq}.

	Next, we consider the interface conditions \eqref{E:IFC-1D} (for $k\neq 0$). The conditions $\llbracket \psi_2\rrbracket =0$ and $\llbracket \perm\psi_1\rrbracket =0$ yield
	\beq \label{E:ABrel}
	A\mu_+=B\mu_- \quad \text{and} \quad A \perm_+=-B\perm_-.
	\eeq
	The first condition in \eqref{E:ABrel} implies that \eqref{E:psi12_form} is the only nontrivial solution (up to normalization). Combining the equations in \eqref{E:ABrel} for a nontrivial solution, (i.e. for $A,B\neq 0$), we get
	\beq\label{E:ABrel2}
	\mu_-\perm_+=-\mu_+\perm_-,
	\eeq
	i.e. \eqref{E:om-ev-cond}.

	The remaining interface condition to be satisfied in \eqref{E:IFC-1D} is $\llbracket \pa_{x_1}\psi_2-\ri k \psi_1\rrbracket =0$.   This is  satisfied due to \eqref{E:psi12_form} and \eqref{E:om-ev-cond} as one easily checks.

	Finally, we discuss the case $k=0$. The first equation in $T_k\psi-\omega^2\perm\psi=0$ reads $\omega^2\perm\psi_1=0$, and hence $\psi_1=0$ since $\omega^2\perm_\pm \neq 0$. The remaining equation
	$$\psi_2'' + \omega^2\perm\psi_2=0, \ x_1\in \R\setminus \{0\}$$
	and the $L^2(\R)$ property imply $\psi_2(x_1)=c_\pm e^{\mp \sqrt{-\omega^2\perm_\pm}x_1}$ for $\pm x_1>0$ with $c_\pm \in \C$. The interface conditions for $k=0$ and $\psi_1=\psi_3=0$ reduce to  $\llbracket \psi_2\rrbracket=\llbracket \pa_{x_1}\psi_2\rrbracket=0$. However, this is possible only if $c_+=c_-=0$ since $\omega^2\perm_\pm \neq 0$. Hence $\psi=0$.
	\epf
	We continue with the proof of the proposition. We have shown \eqref{E:pt-sp1Dknz-omnz} and $\sigma_p^{{\rm red}}(\cL_0)=\emptyset$.

	Next, equation \eqref{E:N0empty} follows immediately from the definition of $N^{(0)}$ because $0=\omega^2\perm_+(\omega)\perm_-(\omega)$ is impossible with $\omega \notin \Omega_0$.

	Lemma \ref{L:efn-1D} implies that each eigenvalue $\omega$ in $\sigma_p^{{\rm red}}(\cL_{k})$ is geometrically simple in the sense that $\lambda=1$ is a geometrically simple eigenvalue of \eqref{E:gen-sp-prob-1D} and
	the eigenfunction $\psi$ is given by \eqref{E:psi12_form}, \eqref{E:lam_pm}, and \eqref{E:vp_vm}.

	Finally, we show that the eigenvalue  $\omega$ in $\sigma_p^{{\rm red}}(\cL_{k})$ is also algebraically simple, which by \eqref{E:gen-evec-eq} means that the problem
	$$L_k(\omega)u=\omega^2\perm\psi, \quad \psi\in\ker(L_k(\omega))\setminus\{0\},$$
	has no solution $u\in D_{k,\omega}$.

	Assuming for a contradiction that $u$ is a solution, we first follow the lines of the proof of Proposition \ref{T:resolv-1D} with $r:=\omega^2\perm\psi\in\cR_k$. Since $\omega^2\perm_\pm\notin\{0\}\cup[k^2,\infty)$, we obtain \eqref{E:u2plus}-\eqref{E:u1-0pm} as before. Similarly to the calculation following \eqref{E:u1-0pm}, we find that the interface condition $\llbracket\perm u_1\rrbracket=0$ is equivalent to
	\begin{eqnarray}\label{E:C2b}
		\hspace{-0.6cm}
		&C_2(k) \mu_+ \mu_- (\perm_+\mu_-+\perm_-\mu_+) \nonumber\\
		& = \ri k (\perm_+ - \perm_-)(\omega^2\perm\psi_1)(0)+\perm_+ \mu_-^2 \int_0^\infty e^{ -\mu_+t}\omega^2\perm_+\psi_2(t)\dd t + \perm_- \mu_+^2 \int_{-\infty}^0 e^{ \mu_- t} \omega^2\perm_-\psi_2(t)\dd t,
	\end{eqnarray}
	which in the proof of Proposition \ref{T:resolv-1D} led to the expression \eqref{E:C2} for $C_2(k)$. Here, however, the left hand side of \eqref{E:C2b} is zero by \eqref{E:om-ev-cond}.

	Next, Lemma \ref{L:efn-1D} and \eqref{E:om-ev-cond} give
	$$(\perm\psi_1)(x_1)=\begin{cases} -\ri k\perm_- e^{-\mu_+ x_1},  & x_1>0,\\
		-\ri k\perm_- e^{\mu_- x_1},  & x_1<0,
	\end{cases} \quad (\perm\psi_2)(x_1)=\begin{cases} -\mu_+\perm_- e^{-\mu_+ x_1},  & x_1>0,\\
		\mu_-\perm_- e^{\mu_- x_1},  & x_1<0,
	\end{cases}$$
	so \eqref{E:C2b} implies
	\begin{eqnarray*}
		0&=& k^2(\perm_+-\perm_-)\perm_--\perm_+\mu_-^2\frac{\perm_-}{2}+\perm_-\mu_+^2\frac{\perm_-}{2}\\
		&=& \perm_-\left[k^2(\perm_+-\perm_-)-\frac12 \perm_+(k^2-\omega^2\perm_-)+\frac12\perm_-(k^2-\omega^2\perm_+)\right]\\
		&=&\frac{k^2}{2}\perm_-(\perm_+-\perm_-),
	\end{eqnarray*}
	contradicting Remark \ref{R:Wp-neq-pWm}.

	This concludes the proof of Proposition \ref{T:pt-spec-1D}.
	\epf

	\brem
	The fact that $\psi_3=0$ in Lemma \ref{L:psi3_zero} implies (for $\psi=E$) that $H=\frac{\ri}{\omega}\nabla \times E=(0,0,h(x_1))^Te^{\ri kx_2}$, i.e.~the eigenfunctions are TM-modes.
	\erem

	\subsection{Weyl Spectrum of $\cL_k$}\label{S:ess-sp-1D}
	Recall that
	$$\sweyl(\cL_k)=\{\omega \in D(\perm): \exists (u^{(n)})\subset \cD_{k,\omega}: \|u^{(n)}\|_{L^2(\R)}=1 \ \forall n\in\N, u^{(n)} \rightharpoonup 0, \|L_k(\omega)u^{(n)}\|_{L^2(\R)}\to 0 \ (n\to\infty)\}.$$

	\bprop\label{L:Weyl_sp_Tk}
	Let $k\in \R$. Then
	$$
	\begin{aligned}
		\sweylr(\cL_k) \supset M_+^{(k)}\cup M_-^{(k)}=
		\begin{cases}
			\{\omega \in D(\perm): \omega^2\perm_+(\omega)\in [k^2,\infty) \text{  or  } \omega^2\perm_-(\omega)\in [k^2,\infty)\} \ &\text{if } \ k\neq 0,\\
			\{\omega \in D(\perm): \omega^2\perm_+(\omega)\in (0,\infty) \text{  or  } \omega^2\perm_-(\omega)\in (0,\infty)\} \ &\text{if } \ k= 0.
		\end{cases}
	\end{aligned}
	$$
	\eprop
	\bpf
	We prove $\{\omega \in D(\perm): \omega^2\perm_+(\omega)\in [k^2,\infty)\}\subset \sweyl(\cL_k) $ for any $k\in \R$ in detail using a Weyl sequence $(u^{(n)})$ with the support moving to $x_1\to+\infty$. The other part is proved analogously with a sequence with the support moving to $x_1\to -\infty$.

	The most natural choice of a Weyl sequence is one given by a plane wave (in $x_1$) smoothly cut off to have compact support and with the support moving out to $x_1\to +\infty$. Such sequences do not see the interface (for $n$ large enough), hence the interface conditions can be ignored in their construction.

	Although Weyl sequences with non-zero first and second components exist, we choose a much simpler sequence $(u^{(n)})$ with $u^{(n)}_1=u^{(n)}_2=0$ in the spirit of Remark \ref{R:block1D}.

	If $k^2-\omega^2\perm_+(\omega)\leq 0$, simple plane-wave solutions for $x_1>0$ are
	\beq\label{E:plane-wave1D}
	e^{\ri l_j x_1} \bspm 0\\0\\1\espm  \quad \text{with}\quad l_j = (-1)^j\sqrt{\omega^2\perm_+(\omega)-k^2}\in \R, \quad j =1,2.
	\eeq
	Choosing freely $j\in \{1,2\},$ we set $l:=l_j$ and
	$$u^{(n)}(x_1):=\frac{e^{\ri l x_1}}{\sqrt{n}} \varphi\left(\frac{x_1-n^2}{n}\right)\bspm 0 \\ 0 \\1\espm,$$
	where $\varphi \in C^\infty_c(\R), \|\varphi\|_{L^2(\R)}=1$. To check that $(u^{(n)})\subset \cD_{k,\omega}$, note that the divergence condition and all regularity conditions hold trivially. The interface conditions can be ignored for $n$ large enough as explained above. Also $\|u^{(n)}\|_{L^2(\R)}=1$ is satisfied due to the normalization of $\varphi$.

	Let us now check that  $\|L_ku^{(n)}\|_{L^2(\R)}\to 0$. For $n$ large enough
	$L_ku^{(n)}=(0,0,-u^{(n)''}_3-l^2 u^{(n)}_3)^T$ because $u^{(n)}\omega^2\perm=u^{(n)}\omega^2\perm_+$ for $n$ large enough. We have
	$$-u^{(n)''}_3-l^2 u^{(n)}_3 = -2\ri l n^{-3/2}e^{\ri l x_1} \varphi'\left(\frac{x_1-n^2}{n}\right)-n^{-5/2}e^{\ri l x_1} \varphi''\left(\frac{x_1-n^2}{n}\right)$$
	and hence
	$$
	\|L_ku^{(n)}\|_{L^2(\R)}\leq c(n^{-1}\|\phi'\|_{L^2(\R)}+n^{-2}\|\phi''\|_{L^2(\R)})\to 0.
	$$
	Finally, to show $u^{(n)}\rightharpoonup 0$, let $\eta \in L^2(\R,\C^3)$ be arbitrary.
	$$
	\begin{aligned}
		\left|\int_\R \overline{\eta}^Tu^{(n)}\dd x_1\right| &=\left|n^{-1/2}\int_\R e^{\ri l x_1}\varphi\left(\frac{x_1-n^2}{n}\right)\overline{\eta}_3(x_1)\dd x_1 \right|\\
		&\leq n^{-1/2}\int_{[n,\infty)} \left|\varphi\left(\frac{x_1-n^2}{n}\right)\right| |\eta_3(x_1)|\dd x_1\\
	\end{aligned}
	$$
	for $n$ large enough. And hence
	$$\left|\int_\R \overline{\eta}^Tu^{(n)}\dd x_1\right| \leq \|\varphi\|_{L^2(\R)}\|\eta_3\|_{L^2([n,\infty))}\to 0.$$

	Finally, we discuss the case $k=0$. If $\omega^2\perm_+(\omega)=0$, then the first equation in $T_0\psi-\omega^2\perm(\omega)\psi=0$ reduces to $0=0$, implying  that
	$$u^{(n)}(x_1):=\bspm f(x_1-n)\\ 0 \\0\espm$$ is a Weyl sequence for any $f\in C_c(\R)$ with $\|f\|=1$ (and for $n$ large enough such that $\text{supp}f(\cdot-n)\subset (0,\infty)$). Note that in this case $L_0(u^{(n)})=0$ for all $n$.

	Assume next that $\omega^2\perm_+(\omega)> 0$. The plane waves on $x_1>0$ have the form
	$$
	\xi^{(j)}e^{\ri l_j x_1} \quad \text{with}\quad l_j = (-1)^j\sqrt{\omega^2\perm_+(\omega)}\in \R, \xi^{(j)}\in \text{span}\left\{\bspm 0\\1\\0\espm, \bspm 0\\0\\1\espm\right\}, \quad j =1,2.
	$$
	Let us choose $j=1$ with $\xi^{(1)}:=\bspm 0\\1\\0\espm$ and set $l:=l_1$. A Weyl sequence can be chosen as
	$$u^{(n)}(x_1):=c \frac{e^{\ri l x_1}}{\sqrt{n}} \varphi\left(\frac{x_1-n^2}{n}\right)\xi^{(1)}, \quad \varphi \in C^\infty_c(\R,\R)$$
	with $c:=\|\varphi\|^{-1}$, such that $\|u^{(n)}\|=1$ for all $n$. Clearly, also $u^{(n)}\in D(L_0(\omega))$ holds.

	Next, we check that $L_0(\omega)u^{(n)} \to 0$ in $L^2(\R,\C^3)$ as $n\to \infty$. For $n$ large enough the support of $u^{(n)}$ is disjoint from $(-\infty,0)$ so that $\omega^2\perm u^{(n)}=\omega^2\perm_+u^{(n)}$ and we get
	$$L_0(\omega)u^{(n)}= -2\ri lcn^{-3/2}e^{\ri lx_1}\varphi'\left(\frac{x_1-n^2}{n}\right)\bspm 0\\ 1 \\ 0\espm -cn^{-5/2}e^{\ri lx_1}\varphi''\left(\frac{x_1-n^2}{n}\right)\bspm 0\\ 1 \\ 0\espm.$$
	Hence
	$$\|L_0(\omega)u^{(n)}\|\leq c\left(n^{-1}\|\varphi'\|+n^{-2}\|\varphi''\|\right)\to 0.$$
	Finally, to show $u^{(n)}\rightharpoonup 0$ in $L^2(\R,\C^3)$, let $\eta \in L^2(\R,\C^3)$ be arbitrary. We have
	$$
	\begin{aligned}
		\left|\int_\R \overline{\eta}^Tu^{(n)}\dd x_1\right| &=\frac{c}{\sqrt{n}}\left|\int_\R e^{\ri l x_1}\varphi\left(\frac{x_1-n^2}{n}\right)\overline{\eta}_2(x_1)\dd x_1\right| \\
		&\leq \frac{c}{\sqrt{n}}\int_n^\infty \left|\varphi\left(\frac{x_1-n^2}{n}\right)\right| |\eta_2(x_1)|\dd x_1 \quad \text{for} \ n\ \text{large enough}\\
		&\leq c \|\varphi\|_{L^2(\R)} \|\eta_2\|_{L^2((n,\infty))} \to 0.
	\end{aligned}
	$$
	\epf

	Proposition \ref{L:Weyl_sp_Tk} shows one inclusion in the second equality in \eqref{E:weyl-sp1D}. The other inclusion will be proved in Section \ref{S:1d-pt4}.

	\subsection{Proof of \eqref{E:disc-sp1D} in Theorem \ref{T:main1D}: Discrete Spectrum of $\cL_k$}\label{S:disc-sp-1D}

	As we show next, outside the exceptional set $\Omega_0$ finite multiplicity eigenvalues of $\cL_{k}$ constitute the discrete spectrum, i.e. the eigenvalue $\lambda=1$ of \eqref{E:gen-sp-prob-1D} is isolated from the rest of the spectrum of the generalized eigenvalue problem
	\beq\label{E:gen-sp-prob-1D}
	T_k(\pa_{x_1})u = \lambda \omega^2\perm(x_1,\omega)u.
	\eeq

	\bprop\label{T:disc_sp_1D}
	\beq\label{E:disc-sp1Dknz-omnz}
	\sigma_d^{\rm{red}}(\cL_{k})=N^{(k)}
	\eeq
	for all $k\in \R$. In summary (together with \eqref{E:pt-sp1D} of Theorem \ref{T:main1D}) we have $\sigma_d^{\rm{red}}(\cL_k)=\sigma_p^{(<\infty)\ \rm{red}}(\cL_k)$ for all $k\in \R$.
	\eprop
	\bpf
	By \eqref{E:pt-sp1D} the set on the right hand side of \eqref{E:disc-sp1Dknz-omnz} is the set $\sigma_p^{(<\infty)\ \rm{red}}(\cL_{k})$ and hence the inclusion $\subset$ in \eqref{E:disc-sp1Dknz-omnz} holds. To show $\supset$, let $\omega \in N^{(k)}$. It remains to check that $\omega$ is an isolated eigenvalue, i.e. that $\lambda=1$ is an isolated eigenvalue of \eqref{E:gen-sp-prob-1D}. Hence, we need the existence of $\delta>0$ such that
	$$\lambda \in B_\delta(1)\setminus\{1\}\subset \C \ \Rightarrow \ \lambda \in \rho(L_k(\omega;\cdot)).$$
	We use the following equivalences
	$$ \lambda \in \rho(L_k(\omega;\cdot)) \ \Leftrightarrow \  1 \in \rho(L_k^{(\lambda)}(\omega;\cdot))  \ \Leftrightarrow \  \omega\in \rho(\cL_k^{(\lambda)}),$$
	where $L_k^{(\lambda)}(\omega;\mu):=T_k(\pa_1)-\mu\lambda \omega^2\perm(x_1,\omega)$ and $\cL_k^{(\lambda)}$ is the corresponding pencil with the pencil parameter $\mu$ and with $\lambda \in \C$ fixed.

	Due to Proposition \ref{T:resolv-1D} we have
	$$\rho(\cL_k)\supset D(\perm)\setminus (\Omega_0\cup M_+^{(k)} \cup M_-^{(k)} \cup N^{(k)}).$$
	Hence also
	$$\rho(\cL_k^{(\lambda)})\supset D(\perm)\setminus (\Omega_0\cup M_+^{(k,\lambda)} \cup M_-^{(k,\lambda)} \cup N^{(k,\lambda)}) \quad \forall \lambda \in\C\setminus\{0\},$$
	where
	$$
	\begin{aligned}
		M_\pm^{(k,\lambda)} &:= \{\omega\in D(\perm): \lambda \omega^2\perm_\pm(\omega)\in [k^2,\infty)\}\\
		N^{(k,\lambda)} &:= \{\omega\in D(\perm): \lambda \omega^2\perm_\pm(\omega)\notin \{0\}\cup[k^2,\infty), k^2(\perm_+(\omega)+\perm_-(\omega))=\lambda \omega^2\perm_+(\omega)\perm_-(\omega)\}.
	\end{aligned}
	$$
	Therefore, it suffices to show that there is a $\delta>0$ such that
	$$\text{if }\lambda\in B_\delta(1)\setminus \{1\}\subset \C, \text{ then } \omega\notin \Omega_0\cup M_+^{(k,\lambda)} \cup M_-^{(k,\lambda)} \cup N^{(k,\lambda)}.$$
	For this note that $\omega \notin \Omega_0$ because $\omega\in N^{(k)}$ and $N^{(k)}\cap \Omega_0=\emptyset$ and that  $\omega \notin N^{(k,\lambda)}$ because
	$$k^2=\frac{\omega^2\perm_+(\omega)\perm_-(\omega)}{\perm_+(\omega)+\perm_-(\omega)} \neq \lambda \frac{\omega^2\perm_+(\omega)\perm_-(\omega)}{\perm_+(\omega)+\perm_-(\omega)}$$
	if $\lambda \neq 1$. Finally, $\omega \notin M_\pm^{(k,\lambda)}$ for $\lambda\in B_\delta(1)\setminus \{1\}$ with $\delta$ small enough because $\omega \notin M_\pm^{(k)}$, i.e., $\omega^2\perm_\pm(\omega)\in \C\setminus [k^2,\infty)$, which is an open set, and hence $\lambda \omega^2\perm_\pm(\omega)\in \C\setminus [k^2,\infty)$ for $\lambda\in \C$ close enough to $1$. This is correct also for $k=0$ since $\omega^2\perm_\pm(\omega)\neq 0$ by assumption.
	\epf

	\subsection{Proof of \eqref{E:sp1D} and \eqref{E:weyl-sp1D} in Theorem \ref{T:main1D}: Composition of the Spectrum of $\cL_k$}\label{S:1d-pt4}

			In Proposition \ref{L:Weyl_sp_Tk} we showed one inclusion in \eqref{E:weyl-sp1D}. In the following proposition we show the rest of \eqref{E:weyl-sp1D}.
			\bprop\label{P:spec-compos}
			Let $k\in \R$. Then
			$$
			\sigma^{{\rm red}}(\cL_k)=\sweylr(\cL_k)~\dot{\cup}~\sigma^{{\rm red}}_d(\cL_k), \quad \sfiver(\cL_k)=\sweylr(\cL_k)=\soner(\cL_k)=M_+^{(k)} \cup M_-^{(k)},
			$$
			where $\dot{\cup}$ denotes the disjoint union.
			\eprop

			\bpf
			By Propositions \ref{T:resolv-1D} and \ref{T:disc_sp_1D}, we have that
			\be\label{eq:red1} \sigma^{{\rm red}}(\cL_k)\subseteq (M_+^{(k)} \cup M_-^{(k)}) \dot{\cup} N^{(k)}, \quad  \sigma_d^{\rm{red}}(\cL_{k})=N^{(k)}. \ee
			Therefore, by Remark \ref{R:disc-sfive},
			$$\sfiver(\cL_k)=\sigma^{{\rm red}}(\cL_k)\setminus\sigma_d^{\rm{red}}(\cL_{k}) \subset  M_+^{(k)} \cup M_-^{(k)}.$$ Since $\sweylr(\cL_k)=\stwor(\cL_k)\subset \sfiver(\cL_k)$, with Proposition \ref{L:Weyl_sp_Tk} we get  $\sweylr(\cL_k)=\sfiver(\cL_k)=  M_+^{(k)} \cup M_-^{(k)}.$ Using again Remark
			\ref{R:disc-sfive} and the equality $\sigma_d^{\rm{red}}(\cL_{k})=N^{(k)}$ from \eqref{eq:red1},  we
			get
			$$\sigma^{{\rm red}}(\cL_k) =\sfiver(\cL_k) \cup \sigma_d^{\rm{red}}(\cL_{k})=  (M_+^{(k)} \cup M_-^{(k)}) \dot{\cup} N^{(k)},$$
			i.e. the equality in \eqref{eq:red1} holds also.

			Thus, it remains to show that $\stwor(\cL_k)\subseteq\soner(\cL_k)$.  By Proposition \ref{T:pt-spec-1D}, for $\omega\in \sigma^{{\rm red}}(\cL_k)$, we have that $\ker(L_k(\omega))$ is finite-dimensional. Therefore, if $\omega\in\stwor(\cL_k)$, then $L_k(\omega)$ cannot be semi-Fredholm. This implies that $\omega\in\soner(\cL_k)$.
			\epf

			We note that Proposition \ref{P:spec-compos} completes the proof of Theorem \ref{T:main1D}, where the equality $\soner(\cL_k)=\stwor(\cL_k)=\sthreer(\cL_k)=\sfourr(\cL_k)=\sfiver(\cL_k)=\sweylr(\cL_k)$ follows because $\sone\subset\stwo\subset \sthree\subset \sfour\subset\sfive$ always holds. As a result, all forms of the reduced essential spectrum coincide.
			%

			\subsection{Proof of Theorem \ref{TheoPlum2}}

			In this subsection, we give a full proof of Theorem \ref{TheoPlum2}.

			\begin{proof}  {\bf Part a)} Let $k \in \R \setminus \{ 0 \}$. First we address \eqref{E:1Dspec-Om0}, \eqref{Plum2}, \eqref{Plum3}, and \eqref{E:1Des1-Om0}.\newline\indent
				With
				$$S_{\rho}:=\{ \omega \in \Omega_0 : \omega^2\perm_+ (\omega) = \omega^2\perm_- (\omega) = 0, ~\perm_+ (\omega) \neq 0, ~ \perm_- (\omega) \neq 0 ,~ \perm_+ (\omega) + \perm_- (\omega) \neq 0\},$$
				i.e. the complement (within $\Omega_0$) of the  right-hand side of \eqref{E:1Dspec-Om0},
				and with $S_p^{(<\infty)}, S_p^{(\infty)}, S_{e,1}, S_{p, \overline{e,1}}^{(\infty)}$ denoting the right-hand sides of \eqref{Plum2}, \eqref{Plum3}, \eqref{E:1Des1-Om0}, \eqref{Plum6}, respectively, we will prove further below the set inclusions
				\begin{enumerate}
					\item[A)] $\rho (\cL_k) \cap \Omega_0 \supset S_{\rho}$,
					\item[B)] $(\sigma_p^{(<\infty)} (\cL_k) \cap \Omega_0 ) \setminus (\sigma_{e,1} (\cL_k) \cap \Omega_0 ) \supset S_p^{(<\infty)}$,
					\item[C)] $\sigma_p^{(\infty)} (\cL_k) \cap \Omega_0 \supset S_p^{(\infty)}$,
					\item[D)] $\sigma_{e,1} (\cL_k) \cap \Omega_0 \supset S_{e,1}$,
					\item[E)] $(\sigma_p^{(\infty)} (\cL_k) \cap \Omega_0 ) \setminus
					(\sigma_{e,1} (\cL_k) \cap \Omega_0 )  \supset S_{p, \overline{e,1}}^{(\infty)}$.
				\end{enumerate}

				With A), ..., E) at hand, we can proceed with the proof as follows: The sets on the left-hand sides in A), B), D), E) are pairwise disjoint, and the union of the sets on the right-hand sides in A), B), D), E) is the whole of $\Omega_0$ (to see this, distinguish the cases $\omega \neq 0$ and $\omega =0$). Hence equality holds in A), B), D), E), which already implies \eqref{E:1Dspec-Om0} and \eqref{E:1Des1-Om0}. Moreover, using C), we get
				\be \label{Plum7}
				S_{e,1} \cup S_{p, \overline{e,1}}^{(\infty)} = S_p^{(\infty)} \subset\sigma_p^{(\infty)} (\cL_k) \cap \Omega_0,
				\ee
				and hence
				\be\label{Plum7a} \sigma_{e,1} (\cL_k) \cap \Omega_0 \, \underset{\eqref{E:1Des1-Om0}}{=} \, S_{e,1} \subset \sigma_p^{(\infty)} (\cL_k) \cap \Omega_0\ee
				which implies, together with the equality in D) and E), that
				$$ S_{e,1} \cup S_{p, \overline{e,1}}^{(\infty)} = \sigma_p^{(\infty)} (\cL_k) \cap \Omega_0 , $$
				and hence \eqref{Plum7} gives \eqref{Plum3}.\newline\indent
				Moreover, \eqref{Plum3} and \eqref{E:1Des1-Om0} (or \eqref{Plum7a}) imply $\sigma_{e,1} (\cL_k) \cap \Omega_0 \subset  \sigma_p^{(\infty)} (\cL_k) \cap \Omega_0$, whence
				$$ \sigma_{e,1} (\cL_k) \cap \sigma_p^{(<\infty)} (\cL_k) \cap \Omega_0 = \emptyset, $$
				and thus
				$$ (\sigma_p^{(< \infty)} (\cL_k) \cap \Omega_0 ) \setminus (\sigma_{e,1} (\cL_k) \cap \Omega_0) = \sigma_p^{(<\infty)} (\cL_k) \cap \Omega_0,$$
				which together with the equality in B) implies \eqref{Plum2}.\newline\indent
				The statement about the Fredholm property of of $\omega=0$ will also be shown within our proof of B) below.\newline\indent
				To prove \eqref{E:1Ddiscsp-Om0}, we first note, using \eqref{Plum2}, that  $\sigma_d (\cL_k) \cap \Omega_0 \subset  \sigma_p^{(<\infty)} (\cL_k) \cap \Omega_0 = \{ 0\}$ if $\perm_+ (0) \neq 0, ~ \perm_- (0) \neq 0, ~ \perm_+ (0) + \perm_- (0) = 0$, and is empty otherwise. Also in the first case we obtain $\sigma_d (\cL_k) \cap \Omega_0 = \emptyset$ since $0 \notin \sigma_d (\cL_k) \cap \Omega_0$, as for $\omega = 0$ we have $\omega^2\perm_+ (\omega) = \omega^2\perm_- (\omega) = 0$ and hence {\it every} $\lambda \in \C$ is an eigenvalue of problem \eqref{E:gen-sp-prob-1D}, always with the same associated eigenfunction
				$$ \left( u_1, \frac{\ri}{k} u_1', 0 \right), ~ \text{ where } u_1 (x) = \left\{ \begin{array}{ll} e^{-|k|x} & \text{on } \R_+, \\
					-e^{|k|x} & \text{on } \R_-; \end{array} \right.  $$
				see also the proof of B) below. In particular, the eigenvalue $\lambda = 1$ of problem \eqref{E:gen-sp-prob-1D} is not isolated, whence indeed $\sigma_d (\cL_k) \cap \Omega_0  = \emptyset$. \newline\indent
				Now we address \eqref{E:1DWeyl-Om0}, \eqref{E:1Des1-Om0}, and \eqref{E:1Des5-Om0}. By Remark \ref{R:disc-sfive}, \eqref{E:1Ddiscsp-Om0} implies $\sigma_{e,5} (\cL_k) \cap \Omega_0 = \sigma (\cL_k) \cap \Omega_0 = \Omega_0 \setminus (\rho (\cL_k) \cap \Omega_0 )$, which on one hand gives
				\eqref{E:1Des5-Om0} and
				\beq\label{E:sig-sigp}
				\sigma(\cL_k)  \cap \Omega_0 = (\sigma_p^{(\infty)} (\cL_k) \cap \Omega_0) \; \dot{\cup} \; (\sigma_p^{(< \infty)}(\cL_k) \cap \Omega_0 ),
				\eeq
				where \eqref{E:sig-sigp} follows because $\Omega_0 = S_{\rho} \; \dot{\cup} \; S_p^{(< \infty)} \; \dot{\cup} \; S_p^{(\infty)}$.\newline\indent
				We are left to prove \eqref{E:1DWeyl-Om0}. From the definition of $\sigma_{e,4}$ and because $L_k(0)$ is Fredholm of index $0$, we obtain
				$$ \sigma_{e,4} (\cL_k) \cap \sigma_p^{(< \infty)} (\cL_k) \cap  \Omega_0 = \emptyset$$
				and therefore, using \eqref{E:sig-sigp},
				\be \label{Plum8}
				\sigma_{e,4} (\cL_k) \cap  \Omega_0 \subset \sigma_p^{(\infty)} (\cL_k) \cap  \Omega_0 .
				\ee
				Furthermore,
				\be \label{Plum8a}
				\sigma_p^{(\infty)} (\cL_k) \cap  \Omega_0 \subset \sigma_{\text{Weyl }} (\cL_k) \cap  \Omega_0,
				\ee
				since any $L^2$-orthonormal sequence in the eigenspace of $\omega \in \sigma_p^{(\infty)} (\cL_k)$ forms a Weyl sequence. Since moreover $\sigma_{e,2} (\cL_k) = \sigma_{\text{Weyl }} (\cL_k)$ by Remark \ref{rem:essential}, and  $\sigma_{e,2} (\cL_k) \subset \sigma_{e,3} (\cL_k) \subset \sigma_{e,4} (\cL_k)$ by Remark \ref{rem:essinc}, we obtain the desired equality chain. This completes the proof of part a), after we have shown the inclusion statements A), ..., E), which we will do now.\newline\indent
				Recall from \eqref{E:Dkom} that, for $u  \in L^2 (\R, \C^3)$,
				\be \label{Plum9}
				u \in \cD_{k,\omega} \Leftrightarrow \left\{ \begin{array}{l}
					u_2 \in H^1 (\R), ~ u_2' - \ri k u_1 \in H^1 (\R),\\ u_3 \in H^2 (\R), \\ \perm u_1 \in H^1 (\R), ~ (\perm u_1)' + \ri k \perm u_2 = 0 \end{array} \right\} ,
				\ee
				and
				\be \label{Plum10}
				L_k (\omega) u = \left( \begin{array}{l}
					(k^2 - \omega^2\perm) u_1 + \ri ku_2' \\ \ri k u_1' - u_2'' - \omega^2\perm u_2 \\ - u_3'' + (k^2 - \omega^2\perm) u_3 \end{array} \right) .
				\ee
				{\bf Ad A) and B).} Let $\omega \in S_{\rho} \cup S_p^{(<\infty)}$, whence
				\be \label{Plum11}
				\omega^2\perm_+ (\omega) = \omega^2\perm_- (\omega) = 0, ~ \perm_+ (\omega) \neq 0, ~ \perm_- (\omega) \neq 0
				\ee
				(which can hold only for $\omega = 0$). For $r \in \mathcal{R}_k$ and $u \in \cD_{k,\omega}$, the equations $L_k (\omega) u = r$ now read, by \eqref{Plum10},
				\begin{align}
					k^2 u_1 + \ri k u_2'  &=  r_1,  \label{Plum12}\\
					\ri k u_1' - u_2''  &=  r_2,  \nn \\
					-u_3'' + k^2 u_3  &=  r_3. \label{Plum13}
				\end{align}
				The second equation can be dropped since it follows from \eqref{Plum12}; note that $r_1' + \ri k r_2 = 0$. First we note that \eqref{Plum13} has a unique solution $u_3 \in H^2 (\R)$. By \eqref{Plum11}, the third line of \eqref{Plum9} reads
				\begin{align}
					u_1 \in H^1 (\R_{\pm}), ~ u_1' + \ri ku_2 = 0 \text{ on } \R_{\pm}, \label{Plum14} \\
					\perm_- u_1 (0-) = \perm_+ u_1 (0+). \label{Plum15}
				\end{align}
				\eqref{Plum12} and \eqref{Plum14}, together with the first line of \eqref{Plum9}, are equivalent to
				\be\label{Plum16}
				\ba
				u_1 \in H^3 (\R_{\pm}), ~ u_2 \in H^2 (\R_{\pm}) \cap H^1 (\R), ~ u_2' - \ri ku_1  \in H^1 (\R), \\
				k^2 u_1 + \ri k u_2' = r_1 \text{ on } \R, ~ u_2 = \frac{\ri}{k} u_1' \text{ on } \R_{\pm}.
				\ea
				\ee
				Inserting the last equation into the one before gives
				\be\label{Plum17}
				-u_1'' + k^2 u_1 = r_1 \text{ on } \R_{\pm}.
				\ee
				Its general solution, subject to condition \eqref{Plum15}, reads in the case $k > 0$ (otherwise replace $k$ by $-k$):
				\beq\label{Plum18}
				\begin{split}
					u_1 (x)  &=  \left[ C \perm_- - \frac{1}{2k} \int\limits_{0}^{\infty} e^{-kt} r_1 (t) dt \right] e^{-kx} + \frac{1}{2k}  \left[ e^{kx} \int\limits_{x}^{\infty} e^{-kt} r_1 (t) dt + e^{-kx} \int\limits_{0}^{x} e^{kt} r_1 (t) dt \right] \text{ on } \R_+, \\ u_1 (x) & =  \left[ C \perm_+ -\frac{1}{2k} \int\limits_{- \infty}^{0} e^{kt} r_1 (t) dt \right] e^{kx} + \frac{1}{2k}  \left[ e^{kx} \int\limits_{x}^{0} e^{-kt} r_1 (t) dt + e^{-kx} \int\limits_{-\infty}^{x} e^{kt} r_1 (t) dt \right] \text{ on } \R_-,
				\end{split}
				\eeq
				with $C \in \C$ denoting a free constant. We define (as required by the last condition in \eqref{Plum16})
				\be\label{Plum19}
				u_2 := \frac{\ri}{k} u_1' \text{ on } \R_{\pm}
				\ee
				which gives $u_2 \in H^2 (\R_{\pm})$ and  $u_2' - \ri k u_1 = - \frac{\ri}{k} r_1\in H^1(\R)$, implying also the last-but-one condition in \eqref{Plum16}. In order to get $u_2 \in H^1 (\R)$, \eqref{Plum19} shows that we need $u_1' (0-) = u_1' (0+)$, which by \eqref{Plum18} reads
				$$k C \perm_+ -\int\limits_{- \infty}^{0} e^{kt} r_1 (t) dt = - k C \perm_- + \int\limits_0^{\infty} e^{-kt} r_1 (t) dt,$$
				i.e.
				\be\label{Plum20}
				C (\perm_+ + \perm_-) = \frac{1}{k} \int\limits_{- \infty}^{\infty} e^{-k|t|} r_1 (t) dt .
				\ee

				{\bf Ad A).} Let $\perm_+ + \perm_- \neq 0$.\newline\indent
				Then \eqref{Plum20} provides a unique value for $C$ and hence altogether a unique solution $u \in \cD_{k,\omega}$ to $L_k (\omega) u = r$. Hence, $\omega (=0) \in \rho (\cL_k)$, which proves A).\newline\indent
				{\bf Ad B).} Let $\perm_+ + \perm_- = 0$.\newline\indent
				Then \eqref{Plum20} holds if and only if
				\be \label{Plum21}
				r_1 \in \left\{ e^{-k|x|} \right\}^{\bot_{L^2}};
				\ee
				the constant $C$ remains free in this case, and \eqref{Plum19} defines $u_2 \in H^2 (\R_{\pm}) \cap H^1 (\R)$. Thus, \eqref{Plum16} holds. \newline\indent
				With $P:L^2 (\R, \C^3) \to \mathcal{R}_k$ denoting the orthogonal projection onto $\mathcal{R}_k$, we obtain, for $r \in \mathcal{R}_k$,
				$$\langle r, (e^{-k|x|}, 0 , 0 )\rangle_{L^2} = \langle Pr, (e^{-k|x|}, 0 , 0 )\rangle_{L^2} = \langle r, P(e^{-k|x|}, 0 , 0) \rangle_{L^2},$$
				and $P(e^{-k|x|}, 0 , 0) \neq 0$ since $(e^{-k|x|}, 0 , 0)$ is not orthogonal to $\mathcal{R}_k$ as there are functions $r\in \cR_k$ such that $\int_\R r_1(s)e^{-|k|s}\dd s\neq 0$. Thus, by \eqref{Plum21},
				$$\Ran L_k (\omega) = \{ P (e^{-k|x|}, 0 , 0) \}^{\bot_{\mathcal{R}_k}}$$
				is closed and has co-dimension one. Moreover, by \eqref{Plum18} and \eqref{Plum19}, and since $C$ is free,
				\beq
				\ker L_k (\omega) = \text{span } \left\{ (u_1, u_2, 0) \in \cD_{k,\omega}: u_1 (x) = \left\{ \begin{array} {l} e^{-kx} \text{ on } \R_+ \\ - e^{kx} \text{ on } \R_- \end{array} \right\} , u_2 = \frac{\ri}{k} u_1' \right\}.
				\eeq
				Thus, $\omega (=0)$ is a geometrically simple eigenvalue. By \eqref{Plum11} and the definitions at the beginning of Section \ref{S:main-res-1D} equation \eqref{E:gen-evec-eq}, with $\lambda=1$ and $v\in \ker(A_k(\omega)-B(\omega))\setminus\{0\}$, reads again $L_k(\omega)u=0$. Hence, due to the geometric simplicity, $u$ and $v$ must be linearly dependent, implying by definition that $\omega=0$ is also algebraically simple. Furthermore,
				$L_k (\omega)$ is clearly a Fredholm operator with index $0$, which proves B).
				\newline\indent
				{\bf Ad C).} Let $\omega \in S_p^{(\infty)}$. Here we consider the case $\perm_+ (\omega) =0$. The case $\perm_- (\omega) =0$ is treated analogously.\\
				Since also $\omega^2\perm_+ (\omega) = \omega^2 \perm_+ (\omega) =0$, we find that, for every $v \in H^1 (\R)$ with support in $\R_+$, $(v', \ri kv, 0) \in \ker L_k (\omega)$, which proves the assertion.\newline\indent
				{\bf Ad D).} Let $\omega \in S_{e,1}$. We consider only the case
				\be\label{Plum22}
				\perm_+ (\omega) = 0, ~ \omega^2\perm_- (\omega) \in [k^2,\infty),
				\ee
				since the alternative case can be treated analogously. First we prove that
				\be\label{Plum23}
				\ker L_k (\omega) \subset \{ u \in \cD_{k,\omega} : u = 0 \text{ on } \R_- \}.
				\ee
				Indeed, defining $a: = \sqrt{\omega^2\perm_- - k^2} \in [0,\infty)$, the equation $L_k (\omega) u = 0$ (with $u \in \cD_{k,\omega}$) reads
				\be\label{Plum24}
				\left. \begin{array} {rl} - a^2 u_1 + \ri k u_2' &= 0 \\ \ri k u_1' - u_2'' - (a^2 + k^2) u_2 &= 0 \\ - u_3'' - a^2 u_3 &= 0 \end{array} \right\} \text{ on } \R_- .
				\ee
				Furthermore, since $\omega^2\perm(\omega)_- \neq 0$ by \eqref{Plum22} and thus $\perm_- \neq 0$, the third line of \eqref{Plum9} implies $u_1 \in H^1 (\R_-)$ and $u_1' = - \ri k u_2$ on $\R_-$. Inserting this into the second equation in \eqref{Plum24} gives $-u_2'' - a^2 u_2 = 0$ on $\R_-$. Differentiating $u_1' = -  \ri k u_2$ provides $\ri k u_2' = - u_1''$, and inserting this into the first equation in \eqref{Plum24} implies $-u_1'' - a^2 u_1 = 0$ on $\R_-$. Thus, all three components solve $-v'' - a^2 v =$ on $\R_-$, with solution
				\beq
				v(x) = \left\{ \begin{array}{ll} A \cos (ax) + B \sin (ax) & \text{if } a > 0, \\
					A + Bx & \text{if } a = 0, \end{array} \right\}
				\eeq
				and thus $v \equiv 0$ on $\R_-$ since $v \in L^2 (\R_-)$. This implies \eqref{Plum23}.\newline\indent
				By \eqref{Plum23}, every $f \in L^2 (\R, \C^3)$ with $\supp f \subset \R_-$ is in $(\ker L_k (\omega))^{\bot_{L_2}}$. Now let $(u^{(n)})$ denote the Weyl sequence used in Lemma \ref{L:Weyl_sp_Tk}, but now with $\supp\ u^{(n)}$ moving to $-\infty$ instead of $+\infty$. Since $\supp \ u^{(n)} \subset \R_-$ and hence $u^{(n)} \in (\ker L_k (\omega))^{\bot}$ for $n$ sufficiently large by the above argument, we find that $\Ran L_k (\omega)$ is not closed in $\mathcal{R}_k$ by Lemma \ref{L:ran_not-closed}. Hence, $\omega \in \sigma_{e,1} (\cL_k)$.\newline\indent
				The condition $\perm_+ (\omega) = 0$ in \eqref{Plum22} has not been used in this proof of D). But actually it follows from $\omega \in \Omega_0$, i.e. $\omega^2\perm_+ (\omega) = 0$ or $\omega^2\perm_- (\omega) = 0$, and $\omega^2\perm_- (\omega) \in [k^2, \infty)$, implying $\omega^2\perm_- (\omega) \neq 0$. Hence, $\perm_+ (\omega) \neq 0$ would imply $\omega = 0$, contradicting $\omega^2\perm_- (\omega) \neq 0$.\newline\indent
				{\bf Ad E).} Let $\omega \in S_{p,\overline{e,1}}^{(\infty)}$. Again, we consider only one of the two cases, hence let
				\be\label{Plum25}
				\perm_+ (\omega) = 0, ~ \omega^2\perm_- (\omega) \notin [k^2,\infty).
				\ee
				$\perm_+ (\omega) = 0$ alone implies that $\omega \in \sigma_p^{(\infty)} (\cL_k) \cap \Omega_0$ by C). We are left to show that $\omega \notin \sigma_{e,1} (\cL_k)$ which we do by proving that $L_k (\omega)$ is onto, whence $L_k (\omega)$ is semi-Fredholm. So, let $r = (r_1, r_2, r_3) \in \mathcal{R}_k$ be given.\newline\indent
				If $\perm_- (\omega) = 0$ (implying $\omega^2\perm_- (\omega) = 0$), the problem $L_k (\omega) u = r$ is solved by $(\frac{1}{k^2} r_1, 0 , u_3) \in \cD_{k,\omega}$, with $u_3 \in H^2 (\R)$ denoting the unique solution of $-u_3 + k^2 u_3 = r_3$, whence $L_k (\omega)$ is indeed onto.\newline\indent
				Now let $\perm_- (\omega) \neq 0$. Together with $\perm_+ (\omega) = 0$ (implying $\omega^2\perm_+ (\omega) = 0$), the third line of \eqref{Plum9} reads
				\be\label{Plum26}
				u_1 \in H^1 (\R_-), ~ u_1 (0 -) = 0, ~ u_1' + \ri k u_2 = 0 \text{ on } \R_-,
				\ee
				and for $u \in \cD_{k,\omega}$ the equation $L_k (\omega) u = r$ reads, by \eqref{Plum10},
				\be\label{Plum27}
				\left. \begin{array}{rl}(k^2 - \omega^2\perm_-) u_1 + \ri k u_2' &= r_1 \\
					\ri k u_1' - u_2'' - \omega^2\perm_- u_2 &= r_2 \\
					- u_3'' + (k^2 - \omega^2\perm_-) u_3 &= r_3 \end{array} \right\} \text{ on } \R_-,
				\ee
				\be\label{Plum28}
				\left. \begin{array}{rl}k^2 u_1 + \ri k u_2' &= r_1 \\
					-u_3'' + k^2 u_3 &= r_3 \end{array} \right\} \text{ on } \R_+;
				\ee
				note that the equation for the second component on $\R_+$ can be dropped as it follows from the first. We set $\mu_- := \sqrt{k^2 - \omega^2\perm_-}$; by \eqref{Plum25} we have $\Re  \mu_- > 0$.\newline\indent
				The equations for $u_3$ in \eqref{Plum27}, \eqref{Plum28}, together with the interface conditions $\llbracket u_3\rrbracket  = \llbracket u_3'\rrbracket  = 0$, have a unique solution $u_3 \in H^2 (\R)$, which follows as in the proof of Proposition \ref{T:resolv-1D}, now with $\mu_+ := |k|$; see \eqref{E:u2plus}, \eqref{E:u2minus} (with $u_2, C_2(k), r_2$ replaced by $u_3, C_3 (k), r_3)$, and \eqref{Neq31}. The remaining equations in \eqref{Plum27}, \eqref{Plum28}, together with \eqref{Plum26} and the first line of \eqref{Plum9}, are equivalent to
				\be\label{Plum29}
				u_1 \in H^3 (\R_-), ~ u_1 (0-) = 0, ~ u_2 \in H^2 (\R_-) \cap H^1 (\R), ~ u_2' - \ri ku_1 \in H^1 (\R),
				\ee
				\be\label{Plum30}
				u_2' = - \frac{\ri}{k} [ r_1 - (k^2 - \omega^2\perm_-) u_1] \text{ on } \R_-, ~ u_1 = \frac{1}{k^2} r_1 - \frac{\ri}{k} u_2' \text{ on } \R_+,
				\ee
				\be\label{Plum31}
				\ri ku_1' - u_2'' - \omega^2\perm_- u_2  =  r_2 \quad \text{ on } \R_-,
				\ee
				\be\label{Plum32}
				u_1' + \ri k u_2  =  0 \quad \text{ on } \R_- .
				\ee
				Inserting $u_2 = \frac{\ri}{k} u_1'$ (resulting from \eqref{Plum32}) into the first equation in \eqref{Plum30} gives
				\be\label{Plum33}
				-u_1'' + (k^2 - \omega^2\perm_-) u_1 = r_1 \text{ on } \R_-,
				\ee
				which together with the boundary condition $u_1 (0-) = 0$ (see \eqref{Plum29}) has a unique solution $u_1 \in H^3 (\R_-)$, as $\Re \mu_- >0$ and $r_1\in H^1(\R)$ (because $r_1'=-\ri k r_2$). Now we define $u_2 \in H^2 (\R_-)$ by
				\be \label{Plum34}
				u_2 := \frac{\ri}{k} u_1' \text{ on } \R_-
				\ee
				and extend $u_2$ in an arbitrary way to a function $u_2 \in H^1 (\R)$. Finally, we define $u_1$ on $\R_+$ by the second equation in \eqref{Plum30}. From \eqref{Plum33}, \eqref{Plum34} it becomes clear that \eqref{Plum30}, \eqref{Plum32}, and (since $r_1' + \ri k r_2 = 0$) \eqref{Plum31} hold true, and that moreover
				\beq
				u_2' - \ri k u_1 = \left\{ \begin{array}{ll}
					\frac{\ri}{k} u_1'' - \ri k u_1 = - \frac{\ri}{k} (r_1 + \omega^2\perm_- u_1) & \text{ on } \R_-, \\
					u_2' - \ri k [ \frac{1}{k^2} r_1 - \frac{\ri}{k} u_2'] = - \frac{\ri}{k} r_1 & \text{ on } \R_+ \end{array} \right.
				\eeq
				is in $H^1 (\R)$ since $r_1 \in H^1 (\R)$ and $u_1 (0-) = 0$. Thus also \eqref{Plum29} holds true, and hence $L_k (\omega)$ is onto. We note that also the property $\omega \in \sigma_p^{(\infty)} (\cL_k)$ becomes visible again here by the arbitrariness of the extension of $u_2$ from $\R_-$ to $\R$.\newline\indent
				{\bf Part b)} Now let $k=0$.\newline\indent
				As in the case $k\neq 0$, the set $S_p^{(\infty)}$ given by the right-hand side of \eqref{Plum3} or \eqref{E:1Dptspinf-Om0k0} is contained in $\sigma_p^{(\infty)} (\cL_k) \cap \Omega_0$; see part C) of the proof. Below we show that
				\begin{enumerate}
					\item[F)] $\sigma_p (\cL_k) \cap (\Omega_0 \setminus S_p^{(\infty)}) = \emptyset$,
				\end{enumerate}
				which implies $\sigma_p (\cL_k) \cap \Omega_0 \subset S_p^{(\infty)}$ and hence \eqref{E:1Dptspinf-Om0k0}, as well as $\sigma_p^{(<\infty)} (\cL_k) \cap \Omega_0 = \emptyset$ and therefore also $\sigma_d (\cL_k) \cap \Omega_0 = \emptyset$, i.e. \eqref{E:1Dptsp-Om0k0} holds. Furthermore we prove
				\begin{enumerate}
					\item [G)] $\sigma_{e,1} (\cL_k) \cap \Omega_0 = \Omega_0$,
				\end{enumerate}
				which by Remarks \ref{rem:essinc} and \ref{rem:essential} implies all remaining equalities asserted in b), i.e. \eqref{E:1Dsp-Om0k0}, and  \eqref{E:1Dspec-ess-Om0k0}.

				{\bf Ad F).} Assume that some $\omega \in \sigma_p (\cL_k) \cap (\Omega_0 \setminus S_p^{(\infty)})$ exists. Then we have $L_k (\omega) u = 0$ for some $u \in \cD_{k,\omega} \setminus \{ 0 \}$ and
				\be\label{Plum35}
				\omega^2\perm_+ (\omega) = 0 \text{ or } \omega^2\perm_- (\omega) = 0,~\perm_+ (\omega) \neq 0, ~ \perm_- (\omega) \neq 0.
				\ee
				\eqref{Plum35} can hold for $\omega=0$ only, which in turn implies
				\be\label{Plum36}
				\omega^2\perm_+ (\omega) = \omega^2\perm_- (\omega) = 0.
				\ee
				Using \eqref{Plum35} and \eqref{Plum36}, equations \eqref{Plum9} and $L_k (\omega) u = 0$ (with $k = 0$) imply
				\beq
				u_1 \in H^1 (\R_{\pm}), ~ u_1' = 0 \text{ on } \R_{\pm}, u_2, u_3 \in H^2 (\R), ~ u_2'' = u_3''  = 0 \text{ on } \R,
				\eeq
				which only holds for $u_1 = u_2 = u_3 \equiv 0$, since $u \in L^2 (\R, \C^3)$. This contradicts our assumption.\newline\indent
				{\bf Ad G).} Let $\omega \in \Omega_0$. We assume that $\omega^2\perm_+ (\omega) = 0$; the case $\omega^2\perm_- (\omega) = 0$ is treated analogously. \eqref{Plum9} now reads
				\be\label{Plum37}
				u \in \cD_{k,\omega} \Leftrightarrow u_2, u_3 \in H^2 (\R), ~ \perm u_1 = 0
				\ee
				(note that $(\perm u_1)' = 0$ implies $\perm u_1 = 0$ since $\perm u_1 \in L^2 (\R)$), and \eqref{Plum10} implies
				\be\label{Plum38}
				L_k (\omega) u = \left( \begin{array}{c} 0\\ -u_2'' \\ - u_3'' \end{array} \right) \text{ on } \R_+,
				\ee
				whence in particular
				$$\ker L_k (\omega) \subset \{ u \in \cD_{k,\omega} : u_2 = u_3 = 0 \text{ on } \R_+ \}$$
				(since $u_2'' = u_3'' = 0$ on $\R_+$ implies $u_2 = u_3 = 0$ on $\R_+$). Thus,
				\be\label{Plum39}
				\{ f = (0,f_2, f_3)  \in L^2 (\R, \C^3): \text{ supp}f \subset \R_+\} \subset (\ker L_k (\omega))^{\bot_{L^2}}.
				\ee
				Now choose some $\varphi \in C_c^{\infty} (\R, \R)$ such that $\| \varphi \|_{L^2} = 1$, and define
				$$u_n (x) := \frac{1}{\sqrt{n}} \; \varphi \left( \frac{x-n^2}{n} \right) \cdot \left( \begin{array}{l} 0\\ 0\\ 1 \end{array} \right) \quad (x \in \R,~ n \in \N).$$
				Then $\| u_n\|_{L^2} = 1$ and, by \eqref{Plum37}, $u_n \in \cD_{k,\omega}$. Moreover, for $n$ sufficiently large,
				\be\label{Plum40}
				\text{supp } u_n \subset \R_+,
				\ee
				and therefore
				$$\| L_k (\omega) [u_n] \|^2_{L^2} = \| u_n'' \|_{L^2}^2 = \frac{1}{n^4} \int\limits_{\R} | \varphi'' (y)|^2 dy \to 0 ~ (n \to \infty),$$
				and finally $u_n \rightharpoonup 0$ in $L^2 (\R)$, which follows as in the proof of Lemma \ref{L:Weyl_sp_Tk}. Since \eqref{Plum39}, \eqref{Plum40} (and $u_{n,1} \equiv 0$) imply $u_n \in (\ker L_k (\omega))^{\bot}$ for $n$ sufficiently large, Lemma \ref{L:ran_not-closed} shows that range $L_k (\omega)$ is not closed in $\mathcal{R}_k$, whence $L_k (\omega)$ is not semi-Fredholm and thus $\omega \in \sigma_{e,1} (\cL_k) \cap \Omega_0$.
			\end{proof}


			\section{Two-Dimensional Reduction}\label{S:2D}

			In this section we study the problem defined  by \eqref{E:ansatz2D} and \eqref{E:NL-eval-2D}. The functional analytic setting is introduced in Sec.~\ref{S:main-res-2D}.

			\subsection{Explanation of the Functional Analytic Setting}
			%

			Similarly to the one-dimensional case in Section \ref{S:1D}, we can rewrite the domain of the operator $\cD_\omega$ in terms of conditions on half spaces and interface conditions. We first note that, due to \eqref{E:ansatz2D}, we have
			\begin{equation}\label{E:curl}
				\nabla\times E=\bspm \pa_{x_2}E_3 \\ -\pa_{x_1}E_3 \\ \pa_{x_1}E_2-\pa_{x_2}E_1\espm \quad \text{and}\quad
				\nabla\times \nabla\times E=\bspm \pa_{x_2}\left(\pa_{x_1}E_2-\pa_{x_2}E_1\right) \\ \pa_{x_1}\left(\pa_{x_2}E_1-\pa_{x_1}E_2\right)\\ -\pa_{x_1}^2E_{3}-\pa_{x_2}^2E_{3}\espm.
			\end{equation}
			Using \eqref{E:curl}, the domain can be equivalently written as
			\begin{align}\label{E:domega}
				\cD_\omega=\{&E\in L^2(\R^2,\C^3): \ \text{the } L^2\text{-conditions} \ \eqref{eq4}, \eqref{eq6},	\text{the divergence condition} \ \eqref{E:div-cond-2D},\\ \nonumber
				&\text{ and the interface conditions } \eqref{E:IFC} \text{ hold}\},
			\end{align}
			\begin{align}
				\pd{1}(E_\pm)_2-\pd{2}(E_\pm)_1, \;  \partial_{x_1}\left(\pd{2}(E_\pm)_1-\partial_{x_1}(E_\pm)_2\right),
				\; \pd{2}\left(\pd{1}(E_\pm)_2-\partial_{x_2}(E_\pm)_1\right) &\in  L^2(\R^2_\pm), \label{eq4}\\
				\pd{1}(E_\pm)_3, \; \pd{2}(E_\pm)_3, \; ( \partial_{x_1}^2 +\partial_{x_2}^2) (E_\pm)_3 &\in L^2(\R^2_\pm),  \label{eq6}
			\end{align}
			\be
			\perm_\pm(\omega) \nabla \cdot  E_\pm = 0\; \text{on} \;\R^2_\pm,
			\label{E:div-cond-2D}
			\ee
			\beq
			\begin{aligned}\label{E:IFC}
				&	\llbracket \perm(\omega) E_1\rrbracket := T^n_+(\perm(\omega) E)-T^n_-(\perm(\omega) E)=0, \ (\llbracket E_2\rrbracket, \llbracket E_3\rrbracket):= T^t_+(E)-T^t_-(E)=0, \\
				&(\llbracket -\pa_{x_1}E_3\rrbracket, \llbracket \pa_{x_1}E_2- \pa_{x_2}E_1\rrbracket)=T^t_+(\nabla\times E)-T^t_-(\nabla \times E)=0,
			\end{aligned}
			\eeq
			where $T^n_\pm, T^t_\pm$ is the normal, resp. tangential trace on $\{0\}\times \R$ taken from $\R^2_\pm$, defined in Appendix \ref{App:traces2D}. A more detailed proof of equality \eqref{E:domega} can also be found there (see Lemma \ref{L:Domega-equiv}).

			Like in the one-dimensional case, choosing $\cR$ (see \eqref{E:R}) rather than all of $L^2(\R^2,\C^3)$ as the range space is necessary in order to obtain a non-empty resolvent set as $\nabla \cdot (L(\omega;\lambda)E)=0$ for any $E\in \cD_\omega$. Again, since there exist $E\in \cD_\omega$ such that $\div{E}\neq 0$ unless $\perm_+=\perm_-$, we have
			$\cD_\omega  \not\subset \mathcal{R}$.

			We can equip $\cD_\omega$ with the inner product
			\be
			\langle E, \tilde E \rangle_{\cD_\omega}:=\langle E, \tilde E \rangle_{L^2}+ \langle \curl{  \curl {E}},\curl{  \curl {\tilde E }}\rangle_{L^2}  \label{eq10}
			\ee
			which makes $\cD_\omega$ a Hilbert space  as  we show in the next lemma.
			\blem\label{L:HS-2D}
			$(\cD_{\omega},\langle\cdot,\cdot\rangle_{\cD_\omega})$ is a Hilbert space for any $\omega \in D(\perm)$.
			\elem
			\bpf
			The proof is analogous to that of Lemma \ref{L:HS-1D}. We simply  replace $\R$ by $\R^2$, $\nabla_k$ by $\nabla$,  and $\cD_{k,\omega}$ by $\cD_\omega$. The equivalence of
			$$\|v\|:=\left(\|v\|_{L^2(\R^2)}^2 + \|\nabla \times v\|_{L^2(\R^2)}^2 + \|\nabla \times \nabla \times v\|_{L^2(\R^2)}^2 \right)^{1/2}$$
			and the norm generated by $\langle \cdot, \cdot\rangle_{\cD_{\omega}}$ uses the fact that
			$$\|\nabla \times v\|_{L^2(\R^2)}^2 = \langle v, \nabla \times \nabla \times v\rangle_{L^2} \leq \|v\|_{L^2(\R^2)}\|\nabla \times \nabla \times  v\|_{L^2(\R^2)}$$
			for each $v\in \cD_{\omega}$. To show the above equality, we use Lemma \ref{L:PI-2D} with $u:=\nabla\times v$. Due to $v\in \cD_\omega$ we have, indeed, $u, v, \nabla\times u, \nabla\times v \in L^2(\R^2,\C^3)$, i.e. the assumptions of Lemma \ref{L:PI-2D} are satisfied.
			\epf

			We shall need the Fourier transformation in what follows and it will be convenient to give a precise definition below.
			\begin{definition} \label{def:ft}
				Let $f$ lie in the Schwartz space of smooth, rapidly decreasing functions over $\R^n$. Then its Fourier transformation is given by
				$$
				(\Fc f)(k) = \hat f(k) =\dfrac1{ (2 \pi)^{n/2}} \int_{\R^n} f(x)e^{-\ri x\cdot k}\dd x\quad \hbox{ for } k \in \R^n.
				$$
			\end{definition}
			The Fourier transform is then extended to all functions $f\in L^2(\R^n)$ by the usual procedure.

			We denote $\hat E(x_1,k):=  [ \Fc(   E(x_1, \cdot ))](k),\hat r (x_1, k):= [\Fc (r(x_1,\cdot))](k).$ By Plancherel's theorem, the $L^2-$conditions for $E$ contained in $\cD_\omega$ transform into $L^2-$conditions for $\hat E$. That is, denoting $\hat E_\pm := \hat  E  |_{\R^2_\pm} = \widehat{E_\pm}$, we have
			$(\hat E_\pm)_j  \in L^2(\R^2_\pm)$, $j=1,2,3$, and conditions (\ref{eq4})-(\ref{E:IFC}) become
			\begin{align}
				\pd{1}( \hat E_\pm)_2  - \ri k (\hat E_\pm)_1, \;  \partial_{x_1}^2( \hat E_\pm)_2- \ri k \pd{1} ( \hat E_\pm)_1 , \;  \ri k\pd1(\hat E_\pm)_2+k^2 (\hat E_\pm )_1 &\in L^2(\R^2_\pm), \label{eq13} \\
				\pd1 (\hat E_\pm)_3, ~\ri k (\hat{E}_\pm)_3, \; \partial_{x_1}^2(\hat E_\pm)_3-k^2 (\hat E_\pm)_3 &\in L^2(\R_\pm^2), \label{eq14}
			\end{align}
			\be
			\perm_\pm(\omega) \left(\pd1(\hat E_\pm)_1+ \ri k (\hat E_\pm)_2\right)=0 \; \text{on} \;\R^2_\pm,
			\label{eq15}
			\ee
			\beq
			\begin{aligned}
				\llbracket \perm(\omega) \hat{E}_1\rrbracket = \llbracket \hat{E}_2\rrbracket =\llbracket \hat{E}_3\rrbracket &=0 \text{ for a.e. } k\in \R, \\
				\llbracket \pa_{x_1}\hat{E}_2- \ri k \hat{E}_1\rrbracket =\llbracket \pa_{x_1}\hat{E}_3\rrbracket &= 0  \text{ for a.e. } k\in \R.
			\end{aligned}
			\label{eq16}
			\eeq
			Note that the functions in \eqref{eq13}-\eqref{eq15} are to be understood as functions of $(x_1,k)$ and the traces in \eqref{eq16} depend on $k$.

			Finally, the conditions for $r\in \cR$ transform into
			\be
			\hat r \in L^2(\R^2,\C^3), ~ \pd{1} \hat r_1 + \ri k \hat r_2=0 \ \text{distributionally for a.e. } k\in \R. \label{eq19}
			\ee

			\brem\label{R:setting-mu-2d}
			Similarly to Remark \ref{R:setting-mu-1d} we note that if $\boldsymbol{\mu}=\boldsymbol{\mu}(x_1)$, then the functional analytic setting changes and the second order formulation on $\R^2$ changes for the same reason as in Remark \ref{R:setting-mu-1d}. Note that the first order formulation with $\boldsymbol{\mu}$ and $\perm$ constant on each $\R^2_+$ and $\R^2_-$ was considered in \cite{CHJ2017} in a specific self-adjoint case.
			\erem


			\subsection{The Resolvent Set of $\cL$}

			After taking the Fourier transformation with respect to $x_2$ the inhomogeneous problem $\nabla\times \nabla \times E-\omega^2\perm(x_1,\omega) E =r$, with $E\in \cD_\omega$ and $r\in \cR$, is equivalent to
			\beq
			\left(\begin{array}{ccc}
				k^2  &\ri k  \pd{1}  & 0 \\
				\ri k \pd{1}  & -\partial_{x_1}^2 & 0 \\
				0 & 0& -\partial_{x_1}^2+k^2   \end{array}
			\right)\hat E-\omega^2\perm(x_1 , \omega) \hat E =\hat r, \quad (x_1,k)\in \R^2.
			\label{eq12}
			\eeq
			Since the Fourier transformation $\Fc : L^2(\R)\to L^2(\R)$ is isomorphic and isometric, we obtain:
			\begin{lemma} \label{lem1}
				$\omega \in D(\perm)$ is in the resolvent set $\rho(\cL)$ if and only if, for each $\hat r$ satisfying (\ref{eq19}), problem (\ref{eq12}) has a unique solution
				$\hat E \in L^2(\R^2,\C^3)$  satisfying (\ref{eq13})-(\ref{eq16}) and $\| \hat E \|_{L^2}\leq C \| \hat r \|_{L^2}$ with $C$ independent of $\hat r$.
			\end{lemma}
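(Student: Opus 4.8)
The plan is to show that the two statements --- $\omega\in\rho(\cL)$ and the solvability assertion for \eqref{eq12} --- are the same statement transported across the partial Fourier transform in $x_2$, so the argument is short and essentially bookkeeping.

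First I would unwind the left-hand side. By definition $\omega\in\rho(\cL)$ means that $L(\omega):\cD_\omega\to\cR$ is bijective with bounded inverse. Since $\cD_\omega$ is a Hilbert space (Lemma \ref{L:HS-2D}) and $\cR$ is a Hilbert space, Remark \ref{R:equiv-est}, applied with $H=L^2(\R^2,\C^3)$, shows that the boundedness of the inverse is equivalent to the a priori bound $\|E\|_{L^2}\le c\,\|L(\omega)E\|_{L^2}$ for all $E\in\cD_\omega$, which is \eqref{E:bded-1}. Hence $\omega\in\rho(\cL)$ holds if and only if for every $r\in\cR$ the equation $\nabla\times\nabla\times E-W(x_1,\omega)E=r$ has a unique solution $E\in\cD_\omega$, and this solution obeys $\|E\|_{L^2}\le C\|r\|_{L^2}$ with $C$ independent of $r$ --- existence being surjectivity, uniqueness injectivity, and the estimate being \eqref{E:bded-1}.

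Second I would transport this to the Fourier side. The partial Fourier transform $\Fc$ in the $x_2$-variable is, by Plancherel's theorem, an isometric isomorphism of $L^2(\R^2,\C^3)$; it commutes with $\partial_{x_1}$ and with multiplication by any $x_1$-dependent coefficient, in particular $W(x_1,\omega)$, and it replaces $\partial_{x_2}$ by multiplication by $\ri k$. Applying $\Fc$ to $\nabla\times\nabla\times$ (see \eqref{E:curl}) therefore turns $\nabla\times\nabla\times E-W(x_1,\omega)E=r$ into \eqref{eq12}. The membership conditions transform exactly as recorded in the preceding subsection: $E\in\cD_\omega$, i.e.\ \eqref{eq4}--\eqref{E:IFC}, is equivalent to $\hat E$ satisfying \eqref{eq13}--\eqref{eq16}, and $r\in\cR$ is equivalent to $\hat r$ satisfying \eqref{eq19}; moreover $\|E\|_{L^2}=\|\hat E\|_{L^2}$ and $\|r\|_{L^2}=\|\hat r\|_{L^2}$ by isometry. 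Substituting these equivalences into the characterization of $\rho(\cL)$ from the first step yields precisely the assertion of the lemma.

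There is no serious analytic obstacle here; the lemma is a reformulation. The only place deserving care is the equivalence ``$E\in\cD_\omega\iff\hat E$ satisfies \eqref{eq13}--\eqref{eq16}'' for the interface part, because the jump conditions in \eqref{E:IFC} are expressed through normal and tangential traces on $\{0\}\times\R$, and one must confirm that these traces intertwine with $\Fc$ into the pointwise-in-$k$ jump conditions \eqref{eq16}; this is exactly what was carried out in the preceding subsection and in Appendix \ref{App:traces2D} (Lemma \ref{L:Domega-equiv}), so I would simply invoke it. Similarly, every ``for a.e.\ $k$'' clause is the Fourier image of the corresponding distributional condition in $x_2$, which follows from Fubini together with the isometry of $\Fc$.
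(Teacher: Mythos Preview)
Your proposal is correct and takes essentially the same approach as the paper, which in fact gives no detailed proof at all---the lemma is stated immediately after the sentence ``Since the Fourier transformation $\Fc : L^2(\R)\to L^2(\R)$ is isomorphic and isometric, we obtain:'' and is treated as self-evident. Your write-up simply spells out the bookkeeping that the paper leaves implicit.
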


			We start again by studying first the non-singular set $\omega \in D(\perm)\setminus \Omega_0$. We define
			$$\sigma^{{\rm red}}(\cL):=\sigma(\cL)\setminus \Omega_0, \quad \rho^{{\rm red}}(\cL):=\rho(\cL) \setminus \Omega_0.$$

			\begin{proposition}\label{th2}
				$$\sigma^{{\rm red}}(\cL)\subset M_+\cup M_-\cup N,$$
				where $M_\pm$ and $N$ are defined in \eqref{E:Mpm} and \eqref{E:N}.
			\end{proposition}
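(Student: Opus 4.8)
The plan is to mimic the one-dimensional argument of Proposition \ref{T:resolv-1D}, working with the Fourier-transformed problem \eqref{eq12} at each fixed $k\in\R$ and then reassembling in $L^2(\R^2)$ via Plancherel. By Lemma \ref{lem1} it suffices to show: for $\omega\in D(\tilde W)\setminus(\Omega_0\cup M_+\cup M_-\cup N)$ and for every $\hat r$ satisfying \eqref{eq19}, problem \eqref{eq12} has a unique solution $\hat E$ satisfying \eqref{eq13}--\eqref{eq16} together with the bound $\|\hat E\|_{L^2}\le C\|\hat r\|_{L^2}$ with $C$ independent of $\hat r$. Fix such an $\omega$. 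The crucial observation is that, since $\omega\notin M_\pm$, we have $W_\pm(\omega)\notin(0,\infty)$, so for \emph{every} real $k$ one has $k^2-W_\pm(\omega)\notin(-\infty,0]$ and hence $\Re\mu_\pm>0$, where $\mu_\pm:=\sqrt{k^2-W_\pm(\omega)}$ as before; moreover, since $\omega\notin N$, condition \eqref{E:ev.cond-2D} fails for $a=k^2$ for every $k$, i.e. $k^2(W_++W_-)\neq W_+W_-$, which by the computation in Remark \ref{R:Nk-equiv} means $\tilde W_+\mu_-+\tilde W_-\mu_+\neq 0$ for all $k$ (note that the pathological coincidences $W_+=\pm W_-$ are excluded exactly as in Remark \ref{R:Wp-neq-pWm}). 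Thus the solution formulas \eqref{E:u2plus}--\eqref{Neq31} from the proof of Proposition \ref{T:resolv-1D} apply verbatim at each fixed $k$, with $r_j$ replaced by $\hat r_j(\cdot,k)$ and the constants $C_2,C_3$ by $C_2(k),C_3(k)$; the case $k=0$ is included since then $W_\pm(\omega)\neq 0$ (as $\omega\notin\Omega_0$) and the $k=0$ branch of that proof applies.

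The next step is to control the $k$-dependence of all constants in order to integrate over $k$. Here the fact that in Proposition \ref{T:resolv-1D} the $k$-dependence of every estimate was tracked explicitly pays off. From \eqref{E:C2-est} and \eqref{E:C3-est} (whose right-hand sides already feature the $|k|+1$-dependent constant from \eqref{E:rho1_0_est}) together with \eqref{E:u23pl-est}--\eqref{E:u1min-est}, one gets a pointwise-in-$k$ estimate of the form
$$\|\hat E(\cdot,k)\|_{L^2(\R_{x_1})}\le C(k)\,\|\hat r(\cdot,k)\|_{L^2(\R_{x_1})},$$
and the key analytic point is that $C(k)$ can be chosen \emph{uniformly bounded in $k\in\R$}. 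This requires checking that $\Re\mu_\pm(k)$ is bounded below away from $0$ uniformly in $k$ and that $|\tilde W_+\mu_-+\tilde W_-\mu_+|$ is bounded below away from $0$ uniformly in $k$; both follow from the asymptotics $\mu_\pm(k)=|k|\sqrt{1-W_\pm/k^2}\sim|k|$ as $|k|\to\infty$ (so $\Re\mu_\pm\to\infty$ and $\tilde W_+\mu_-+\tilde W_-\mu_+\sim(\tilde W_++\tilde W_-)|k|$, using $\tilde W_++\tilde W_-\neq 0$ which holds because \eqref{E:ev.cond-2D} with $a=k^2$ fails for large $k$, equivalently since $\omega\notin N$ forces $\tilde W_+(\omega)+\tilde W_-(\omega)\neq 0$ here), combined with continuity on the compact set $|k|\le R$. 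Integrating $\|\hat E(\cdot,k)\|_{L^2(\R_{x_1})}^2$ over $k\in\R$ and using Plancherel then yields $\|\hat E\|_{L^2(\R^2)}\le C\|\hat r\|_{L^2(\R^2)}$ with $C$ independent of $\hat r$.

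Finally one must verify that the assembled $\hat E$ genuinely lies in the Fourier-side domain, i.e. satisfies \eqref{eq13}--\eqref{eq16}. The interface conditions \eqref{eq16} hold for a.e.\ $k$ by construction of $C_2(k),C_3(k)$, exactly as the conditions \eqref{E:IFC-1D} were arranged in the 1D proof (the condition $\llbracket \pa_{x_1}\hat E_2-\ri k\hat E_1\rrbracket=0$ again following from the first equation of \eqref{eq12} and \eqref{eq19} for $k\neq0$, and from continuity of $\pa_{x_1}\hat E_2$ for $k=0$). The divergence condition \eqref{eq15} holds on each half-space automatically from the differential equation, since $\nabla_k\cdot\hat r_\pm=0$ by \eqref{eq19} and $\nabla_k\cdot(\nabla_k\times\cdot)=0$, just as in the 1D case. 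For the $L^2$-regularity conditions \eqref{eq13}--\eqref{eq14} one needs $\pd1(\hat E_\pm)_2,\pd1(\hat E_\pm)_3\in L^2$ and the second-order combinations; the first-order ones follow from estimates like \eqref{E:pa1_u23-est} together with the uniform-in-$k$ control above, and the second-order ones follow directly from \eqref{eq12} rearranged, again uniformly in $k$. I expect the main obstacle to be precisely the uniformity in $k$: one must be careful that the constants $\tilde C_2(k),\tilde C_3(k)$ arising from \eqref{E:C2-est1}, \eqref{E:C3-est1} — which involve the potentially small quantities $\Re\mu_\pm(k)$ and $|\tilde W_+\mu_-+\tilde W_-\mu_+|$ in denominators — do not blow up as $|k|\to\infty$ nor at any finite $k$, and that the $\sqrt{|k|+1}$ growth in \eqref{E:rho1_0_est} is compensated by the decay of $1/|\mu_\pm|^2\sim 1/k^2$ in \eqref{E:C2-est1}; this bookkeeping is the heart of the proof, while everything else is a transcription of the 1D argument.
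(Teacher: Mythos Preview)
Your overall strategy is exactly that of the paper: reduce via Lemma~\ref{lem1} to the Fourier-side problem, invoke the one-dimensional solution formulas \eqref{E:u2plus}--\eqref{Neq31} at each fixed $k$, and then establish uniform-in-$k$ bounds to reassemble in $L^2(\R^2)$. The identification of the heart of the proof---the uniform control of $\Re\mu_\pm(k)$ and of $|\tilde W_+\mu_-+\tilde W_-\mu_+|$ from below---is also correct.

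There is, however, a genuine error in your treatment of the second quantity. You claim that $\omega\notin N$ forces $\tilde W_+(\omega)+\tilde W_-(\omega)\neq 0$. This is false. If $W_+(\omega)=-W_-(\omega)\neq 0$ (take, e.g., $W_+=\ri$, $W_-=-\ri$), then $W_\pm\notin(0,\infty)$ so $\omega\notin M_\pm$, and \eqref{E:ev.cond-2D} reads $0=a(W_++W_-)=W_+W_-=-W_+^2\neq 0$, which has no solution $a\ge 0$; hence $\omega\notin N$. So such $\omega$ can lie in the putative resolvent region while $\tilde W_++\tilde W_-=0$. (Remark~\ref{R:Wp-neq-pWm}, which you cite, concerns eigenvalues in $N^{(k)}$ and does not apply here.) In this case your asymptotic $\tilde W_+\mu_-+\tilde W_-\mu_+\sim(\tilde W_++\tilde W_-)|k|$ collapses to $0$; one level deeper in the expansion gives
\[
\tilde W_+\mu_-+\tilde W_-\mu_+=\tilde W_+(\mu_--\mu_+)\sim\tilde W_+\,\frac{W_+-W_-}{2|k|}=\frac{\tilde W_+W_+}{|k|},
\]
so the quantity \emph{decays} like $1/|k|$ rather than growing, and your uniform lower bound fails.

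The paper repairs this via Lemma~\ref{lem1.3}, which explicitly separates the two cases and proves only the weaker bound $|\tilde W_+\mu_-+\tilde W_-\mu_+|\ge\delta/(|k|+1)$ in general. This weaker bound still suffices, but only after sharper bookkeeping than you indicate: feeding $\delta/(|k|+1)$ into \eqref{E:C2-est1} yields $|C_2(k)|\le M\sqrt{|k|+1}\,\|\hat r(\cdot,k)\|$ (see \eqref{E:C2-est-2D}), and this $\sqrt{|k|+1}$ growth is then absorbed by the $1/\sqrt{\Re\mu_\pm}\lesssim 1/\sqrt{|k|+1}$ factors in \eqref{E:E23-est-2D}--\eqref{E:E1-est-2D}. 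Your sketch of the compensation mechanism is on the right track, but it must be carried out with the correct (weaker) lower bound, not the one you asserted.
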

			\brem\label{R:res-set-2D}
			In other words, we have $\rho^{{\rm red}}(\cL)\supset D(\perm)\setminus (\Omega_0 \cup  M_+\cup M_-\cup N)$.
			\erem
			\brem\label{R:Nequiv}
			Before starting the proof of Proposition \ref{th2} we note that we can write $N$ as
			$$
			\begin{aligned}
				N= \{&\omega \in D(\perm)\setminus \Omega_0: \omega^2\perm_+(\omega),\omega^2\perm_-(\omega)\notin [a,\infty),\\ &\perm_+(\omega)\sqrt{a-\omega^2\perm_-(\omega)}=-\perm_-(\omega)\sqrt{a-\omega^2\perm_+(\omega)} \ \text{for some} \ a\geq 0\}
			\end{aligned}
			$$
			by the same argument as in the 1D case in Remark \ref{R:Nk-equiv}, where we set $a=k^2$.
			\erem

			Large parts of the proof of Proposition \ref{th2} use calculations performed in the proof of Proposition \ref{T:resolv-1D} already. However, because the $L^2$-conditions are to be shown over $\R^2_\pm$, we need to control also the $k$-dependence of all constants. To that end we use the following Lemma. We define, analogously to the one-dimensional case,
			$$\mu_\pm=\mu_\pm(k):= \sqrt{k^2 -\omega^2\perm_\pm (\omega)}.$$
			\begin{lemma}\label{lem1.3}
				Let $\omega\in D(\perm)\setminus (\Omega_0 \cup  M_+\cup M_-\cup N)$. There exists some $\delta > 0$ such that, for all $k \in \R$,
				\be \label{eq20}
				\Re \mu_+ \ge \delta (|k| + 1),  \; \Re \mu_- \ge \delta (|k| + 1),
				\ee
				\be \label{eq20aPlum}
				\Big| \perm_+(\omega) \sqrt{k^2 -\omega^2\perm_-(\omega)}+  \perm_-(\omega) \sqrt{k^2 -\omega^2\perm_+(\omega)} \Big| \geq \frac{\delta}{|k|+1}  .
				\ee
			\end{lemma}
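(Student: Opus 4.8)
The plan is to fix $\omega \in D(\tilde{W})\setminus(\Omega_0 \cup M_+ \cup M_- \cup N)$ once and for all and to extract three elementary consequences of this membership. First, since $\omega\notin M_\pm$ gives $W_\pm(\omega)\notin(0,\infty)$ and $\omega\notin\Omega_0$ gives $W_\pm(\omega)\neq0$, we have $W_\pm(\omega)\notin[0,\infty)$; hence $k^2-W_\pm(\omega)\notin(-\infty,0]$ for every $k\in\R$, so with the fixed branch convention for $\sqrt{\cdot}$ the numbers $\mu_\pm(k)=\sqrt{k^2-W_\pm(\omega)}$ are well defined, satisfy $\Re\mu_\pm(k)>0$, and depend continuously on $k\in\R$. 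Second, writing $g(k):=\tilde{W}_+(\omega)\mu_-(k)+\tilde{W}_-(\omega)\mu_+(k)$, Remark \ref{R:Nequiv} applied with $a=k^2$ (admissible because $W_\pm(\omega)\notin[0,\infty)\supseteq[k^2,\infty)$) together with $\omega\notin N$ gives exactly $g(k)\neq0$ for all $k\in\R$. Third, $W_\pm(\omega)=\omega^2\tilde{W}_\pm(\omega)\neq0$ forces $\omega\neq0$ and $\tilde{W}_\pm(\omega)\neq0$. For each of the two estimates I would treat $|k|$ large by explicit asymptotics and $|k|$ in a fixed bounded interval by compactness, and at the end take $\delta$ to be the minimum of the finitely many positive constants that occur.

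For \eqref{eq20} I would use the elementary identity $(\Re\sqrt{z})^2=\tfrac12(|z|+\Re z)\geq\max(\Re z,0)$ with $z=k^2-W_\pm(\omega)$, which yields $(\Re\mu_\pm(k))^2\geq k^2-\Re W_\pm(\omega)\geq k^2/2$ as soon as $k^2\geq 2|\Re W_\pm(\omega)|$, hence $\Re\mu_\pm(k)\geq |k|/\sqrt2\geq\tfrac1{2\sqrt2}(|k|+1)$ for all $|k|$ beyond an explicit threshold $K_0$ (chosen also $\geq1$). On $[-K_0,K_0]$ the continuous function $k\mapsto\Re\mu_\pm(k)$ is strictly positive, hence bounded below by some $m>0$, and since $|k|+1\leq K_0+1$ there, $\Re\mu_\pm(k)\geq\tfrac{m}{K_0+1}(|k|+1)$. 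Taking the smaller of the two constants proves \eqref{eq20}.

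For \eqref{eq20aPlum}: on any bounded interval $[-K,K]$ the continuous nonvanishing function $g$ satisfies $|g(k)|\geq g_0>0$, and since $|k|+1\geq1$ this already gives $|g(k)|\geq g_0\geq\tfrac{g_0}{|k|+1}$ there. For $|k|$ large I would split into two cases. If $\tilde{W}_+(\omega)+\tilde{W}_-(\omega)\neq0$, I rewrite, using $\mu_-(k)-\mu_+(k)=\tfrac{W_+(\omega)-W_-(\omega)}{\mu_+(k)+\mu_-(k)}$, $g(k)=(\tilde{W}_++\tilde{W}_-)\mu_+(k)+\tilde{W}_+\tfrac{W_+-W_-}{\mu_+(k)+\mu_-(k)}$; by \eqref{eq20} the first term has modulus at least $|\tilde{W}_++\tilde{W}_-|\,\delta_1(|k|+1)$ while the second is bounded by a constant times $(|k|+1)^{-1}$, so for $|k|$ large the first dominates and $|g(k)|\gtrsim|k|+1\geq(|k|+1)^{-1}$. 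If instead $\tilde{W}_+(\omega)+\tilde{W}_-(\omega)=0$, then $\tilde{W}_-(\omega)=-\tilde{W}_+(\omega)$ and $W_+(\omega)-W_-(\omega)=\omega^2(\tilde{W}_+(\omega)-\tilde{W}_-(\omega))=2\omega^2\tilde{W}_+(\omega)$, so $g(k)=\tilde{W}_+(\omega)\bigl(\mu_-(k)-\mu_+(k)\bigr)=\tfrac{2\omega^2\tilde{W}_+(\omega)^2}{\mu_+(k)+\mu_-(k)}$; since $|\mu_+(k)+\mu_-(k)|\leq\sqrt{|k^2-W_+(\omega)|}+\sqrt{|k^2-W_-(\omega)|}\leq C(|k|+1)$ and $\omega^2\tilde{W}_+(\omega)^2\neq0$, we obtain $|g(k)|\geq\tfrac{c}{|k|+1}$ for all $k\in\R$.

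I expect the main obstacle to be precisely the estimate \eqref{eq20aPlum} for $|k|\to\infty$ in the case $\tilde{W}_+(\omega)+\tilde{W}_-(\omega)=0$: there the leading $O(|k|)$ contributions of $g(k)$ cancel, so one must both identify the correct decay rate $\sim|k|^{-1}$ and confirm that its coefficient $\omega^2\tilde{W}_+(\omega)^2$ is non-zero, which is exactly where $\omega\notin\Omega_0$ (hence $\omega\neq0$ and $\tilde{W}_+(\omega)\neq0$) is used in an essential way. The remaining ingredients are a routine combination of the identity for $(\Re\sqrt{z})^2$, the trivial bound $|\sqrt{z}|=\sqrt{|z|}$, and compactness of bounded $k$-intervals.
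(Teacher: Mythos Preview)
Your proof is correct and follows essentially the same strategy as the paper: establish the estimates for large $|k|$ by explicit asymptotics (splitting \eqref{eq20aPlum} into the cases $\tilde{W}_+(\omega)+\tilde{W}_-(\omega)\neq 0$ and $=0$), handle bounded $|k|$ by continuity and compactness, and take $\delta$ as the minimum of the resulting constants. The only cosmetic differences are that the paper rationalizes via $\sqrt{k^2-W_\pm}-|k|=-W_\pm/(\sqrt{k^2-W_\pm}+|k|)$ in the first case whereas you use $\mu_--\mu_+=(W_+-W_-)/(\mu_++\mu_-)$, and you supply the explicit identity $(\Re\sqrt{z})^2=\tfrac12(|z|+\Re z)$ for \eqref{eq20} where the paper simply calls this step obvious.
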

			\begin{proof}
				First we show (\ref{eq20}) and (\ref{eq20aPlum}) for $|k| \ge k_0$, with $k_0$ sufficiently large. This is obvious for (\ref{eq20}), and for (\ref{eq20aPlum}) we distinguish two cases:\\
				{\it Case 1:} $\perm_+(\omega) + \perm_-(\omega) \neq 0$. Then,
				\begin{eqnarray*}
					&&   \Big| \perm_+(\omega) \sqrt{k^2 -\omega^2\perm_-(\omega)}+  \perm_-(\omega) \sqrt{k^2 -\omega^2\perm_+(\omega)} \Big|\\
					&& \quad = \Big| [\perm_+(\omega)  +  \perm_-(\omega)] |k| - \omega^2 \perm_+(\omega)  \perm_-(\omega)
					\Big[ \dfrac1{\sqrt{k^2 -\omega^2\perm_-(\omega)} + |k|} + \dfrac1{\sqrt{k^2 -\omega^2\perm_+(\omega)} + |k|} \Big] \Big|\\
					&& \quad \geq   |  \perm_+(\omega) + \perm_-(\omega)| |k| - \dfrac{2 \omega^2| \perm_+(\omega)  \perm_-(\omega)|}{|k|} \geq 1 \geq \frac{1}{|k|+1}
				\end{eqnarray*}
				for $|k|$ sufficiently large.\\
				{\it Case 2:} $\perm_+(\omega) + \perm_-(\omega) = 0$. Then the left-hand side of (\ref{eq20aPlum}) equals
				\begin{eqnarray*}
					&& |\perm_+ (\omega) | \; | \sqrt{k^2 + \omega^2\perm_+(\omega)}- \sqrt{k^2 - \omega^2\perm_+(\omega)}|  \\
					&& \hspace*{2cm} =|\perm_+ (\omega) | \; \frac{2|  \omega^2\perm_+(\omega) | }{|\sqrt{k^2 + \omega^2\perm_+(\omega)}+ \sqrt{k^2 - \omega^2\perm_+(\omega)}|}   \\
					&& \hspace*{2cm} = \frac{2|\omega^2\perm^2_+ (\omega) |}{|k| + 1} \cdot \underbrace{\frac{|k| + 1}{| \sqrt{k^2 + \omega^2\perm_+(\omega)} + \sqrt{k^2 - \omega^2\perm_+(\omega)}|} }_{
						\longrightarrow \frac{1}{2} ~ {\rm as} ~ |k| \to \infty }\\
					&& \hspace*{2cm} \ge \frac{\frac{1}{2} |\omega^2\perm12_+ (\omega)|}{|k| + 1}
				\end{eqnarray*}
				for $|k| \ge k_0$, when $k_0$ is chosen sufficiently large. Since $\omega \notin \Omega_0$ and hence $\perm_+ (\omega) \neq 0$, we obtain (\ref{eq20aPlum}) for $|k| \ge k_0$ also in Case 2.

				Moreover, on $[0, k_0]$ all three quantities on the left-hand sides of (\ref{eq20}) and (\ref{eq20aPlum}) are bounded away from $0$, since they are nowhere $0$ (note that $\omega \notin M_+ \cup M_- \cup N$),
				and $k\mapsto \sqrt{k^2 -\omega^2\perm_\pm(\omega)}$ is continuous as $k^2 -\omega^2\perm_\pm(\omega) \not \in (-\infty, 0]$  for all $k\in\R$. This proves the lemma.
			\end{proof}
			We remark that the right-hand side of (\ref{eq20aPlum}) can be replaced by $\delta (|k| +1)$ if $\perm_+ (\omega) + \perm_- (\omega) \neq 0$, as the proof of Lemma \ref{lem1.3} shows. The weaker bound $\delta / (|k| + 1)$ in (\ref{eq20aPlum}) is needed to include also the case $\perm_+ (\omega) + \perm_- (\omega) = 0$ in our analysis.

			\bpf(of Proposition \ref{th2})\\
			Let $\omega\in D(\perm) \backslash  ( \Omega_0\cup M_+\cup M_-\cup N )$.  We have to show that $\omega\in \rho(\cL)$.
			Next, we use Lemma \ref{lem1}. Thus, let $\hat{r}$ satisfying (\ref{eq19}) be given. As mentioned above, we use the calculations in the proof of Proposition \ref{T:resolv-1D}, which are to be understood for almost every $k\in\R$.

			Equation \eqref{eq12} is identical to \eqref{Neq22} after replacing $u$ by $\hat{E}(\cdot,k)$ and $\rho$ by $\hat r (\cdot,k)$. Hence, we use all the calculations performed on \eqref{Neq22}. As a result $\hat{E}_{2,3}(\cdot,k)$ are given by \eqref{E:u2plus} and \eqref{E:u2minus} and $\hat{E}_1(\cdot,k)$ is given by \eqref{E:u1plus} and \eqref{E:u1minus}, where the constants $C_2(k)$ and $C_3(k)$ are given in \eqref{E:C2} and \eqref{Neq31}. This solution satisfies $\hat E(\cdot, k)\in L^2(\R,\C^3)$ (for almost all $k\in \R)$, the divergence condition \eqref{eq15}, and all the interface conditions \eqref{eq16} (for almost all $k\in \R$) as shown in the proof of Proposition \ref{T:resolv-1D}. It remains to show $\hat E \in L^2(\R^2,\C^3)$, conditions \eqref{eq13}, \eqref{eq14}, and
			\beq
			\| \hat E \|_{L^2} \leq C \norm{\hat r }_{L^2}  \mbox{ with } C  \mbox{ independent of }\hat r. \label{eq32}
			\eeq
			These will be proved using Lemma \ref{lem1.3}. Together with \eqref{E:u23pl-est}, \eqref{E:u23min-est}, \eqref{E:u1pl-est}, and \eqref{E:u1min-est} we obtain for almost all $k\in \R$
			\beq\label{E:E23-est-2D}
			\| \hat E_{2,3}(\cdot,k) \|_{L^2(\R_\pm)}  \leq \dfrac1{\sqrt{2\delta (|k| + 1)}} | C_{2,3}(k)| + \dfrac5{4\delta^2}\norm{\hat r_{2,3}(\cdot,k)}_{L^2(\R_\pm)}
			\eeq
			and
			\begin{align}
				\| \hat E_1(\cdot,k) \|_{L^2(\R_\pm)}  &\leq \dfrac1{\delta^2} \norm{  \hat r_1(\cdot,k) }_{L^2(\R_\pm}  + \frac{|k|}{\sqrt{2 \delta^3 (|k| + 1)^3}}  |C_2 (k)| +
				\frac{5|k|}{4 \delta^3 (|k| + 1)^3}  \norm{\hat r_2 (\cdot,k)}_{L^2(\R_\pm)} \nonumber \\
				& \le M \Big[ \frac{|C_2(k) |}{\sqrt{|k| +1}} +   \|  \hat{r}_1 (\cdot, k)\|_{L^2 (\R_\pm)} + \|  \hat{r}_2 (\cdot, k)\|_{L^2 (\R_\pm)}   \Big], \label{E:E1-est-2D}
			\end{align}
			where, here and in the following, $M$ is a (generic) constant independent of $k$.

			Next, we control the constants $C_{2,3}(k)$ in \eqref{E:C2-est} and \eqref{E:C3-est}. Using Lemma \ref{lem1.3} and \eqref{E:rho1_0_est}, we can continue the estimate in  \eqref{E:C2-est1} by
			\begin{align}
				|C_2 (k)| &  \le \frac{1}{\delta^3 (|k| + 1)} \Big[ |k| \;  |\perm_+ - \perm_- | \sqrt{|k| + 1} \Big( \| \hat{r}_1 (\cdot, k) \|_{L^2 (0,\infty)} + \| \hat{r}_2 (\cdot, k) \|_{L^2 (0,\infty)} \Big) \nonumber  \\
				& \hspace*{1cm} + | \perm_+ | \frac{k^2 + |\omega^2\perm_-|}{\sqrt{ 2 \delta (|k| + 1)}} \| \hat{r}_2 (\cdot, k) \|_{L^2 (0,\infty)} + |\perm_-| \frac{k^2 + |\omega^2\perm_+|}{\sqrt{ 2 \delta (|k| + 1)}} \| \hat{r}_2 (\cdot, k) \|_{L^2 (-\infty,0)} \Big] \nonumber \\
				& \le M \sqrt{|k| + 1}(\| \hat{r}_1 (\cdot, k) \|_{L^2 (\R)}+\| \hat{r}_2 (\cdot, k) \|_{L^2 (\R)}). \label{E:C2-est-2D}
			\end{align}
			Similarly, from \eqref{E:C3-est1} we have
			\beq\label{E:C3-est-2D}
			|C_3(k)|\leq\left ( \dfrac1{2\delta}\right )^{3/2}  \norm{ \hat r_3 (\cdot,k)}_{L^2(\R)}.
			\eeq
			Now \eqref{E:E23-est-2D} - \eqref{E:C3-est-2D} and \eqref{eq19} show that $\hat E \in L^2(\R^2,\C^3)$ and \eqref{eq32} holds.

			Next, we show  \eqref{eq13}-\eqref{eq14}. Starting with \eqref{E:pa1_u23-est} and using Lemma \ref{Nl2lem} and (\ref{eq20}), we have
			\be
			\norm{  \pd{1}(\hat E_+)_{2,3}(\cdot,k)}_{L^2(0,\infty)}
			\leq \dfrac{|\mu_+|}{\sqrt{2 \delta(|k| + 1)} }|C_{2,3}(k)|+\dfrac5{4\delta}
			\norm{ \hat r_{2,3}(\cdot,k)}_{L^2(0,\infty)}.\label{eq42}
			\ee
			Since $|\mu_+|=|\sqrt{ k^2-\omega^2\perm_+}|\leq M (|k|+1)$, we obtain from (\ref{eq42}),(\ref{E:C2-est-2D}) that
			$$\dfrac{\pd{1}(\hat E_+)_2}{|k|+1}\in L^2(\R^2_+),$$
			where the left hand side is again understood as a function of $x_1$ and $k$.

			Analogously,  $\dfrac{ \pd{1}(\hat E_-)_2}{|k|+1} \in L^2(\R^2_-)$.  Thus, using (\ref{Neq23}),
			\begin{eqnarray*}
				\pd{1}(\hat E_\pm)_2-\ri k (\hat E_\pm)_1&=& \dfrac1{ k^2 -\omega^2\perm_\pm}  \left[  (k^2-\omega^2\perm_\pm) \pd{1} (\hat E_\pm)_2 -\ri k (\hat r_1 -\ri k\pd{1}(\hat E_\pm)_2) \right]
				\\
				&=&-\dfrac1{k^2 -\omega^2\perm_\pm}[\omega^2\perm\pm \pd{1}(\hat E_\pm)_2+ \ri k\hat r_1],
			\end{eqnarray*}
			implying
			$$|\pd{1}(\hat E_\pm)_2 - \ri k(\hat E_\pm)_1|\leq   \underbrace{
				\dfrac{ |k|+1|}{|k^2 -\omega^2\perm_\pm |}}_{\leq M}
			\left[ |\omega^2\perm_\pm | \dfrac{   \pd{1}(\hat E_\pm)_2 }{ |k|+1} +|\hat r _1| \right],$$
			whence
			$\pd{1}(\hat E_\pm)_2 - \ri k (\hat E_\pm )_1\in L^2 (\R^2_\pm)$.

			Moreover, by (\ref{Neq22}),
			$$\partial_{x_1}^2 (\hat E_\pm)_2- \ri k \pd{1}( \hat E_\pm)_1 =- \omega^2\perm_\pm (\hat E_\pm)_2 -\hat r_2 \in L^2 (\R^2_\pm) ,$$
			$$ \ri k \pd{1}(\hat E_\pm)_2 + k^2 (\hat E_\pm)_1 = \omega^2\perm_\pm(\hat E_\pm)_1 +\hat r _1 \in L^2 (\R^2_\pm),$$
			whence (\ref{eq13}) is proved.

			To show (\ref{eq14}), we make use of (\ref{eq42}) again. Using the fact that
			$\Re (\mu_++\mu_-)\geq 2 \delta (|k| +1)$ by (\ref{eq20}),
			we obtain
			$$ \dfrac{ |\mu_+|}{\Re(\mu_++\mu_-)}\leq M, \;\dfrac{ |k|}{\Re(\mu_++\mu_-)}\leq M,$$
			which by (\ref{E:C3-est1}) implies
			\be
			|\mu_+||C_3(k)|\leq \dfrac{M}{\sqrt { 2 \delta}}   \norm{\hat r_3(\cdot, k)}_{L^2 (\R)}, \label{eq43}
			\ee
			\be
			|k||C_3(k)|\leq \dfrac{M}{\sqrt { 2 \delta}}  \norm{\hat r_3(\cdot, k)}_{L^2 (\R)}. \label{eq44}
			\ee
			Inequality (\ref{eq42}) (and its analogue for $(\hat E_-)_3)$ and (\ref{eq43}) imply
			$$\pd{1}(\hat E_\pm )_3\in L^2 (\R^2_\pm).$$

			Moreover, (\ref{E:u23pl-est}), (\ref{E:u23min-est}) (for $\hat E_3(\cdot,k)$ instead of $u_3$), and (\ref{eq44}), together with $|k|/\Re(\mu_\pm)\leq M$ (which follows from \eqref{eq20}), show that
			$$\ri k (\hat E_\pm)_3 \in L^2 (\R^2_\pm).$$

			Also, by  (\ref{Neq22})
			$$\partial_{x_1}^2 (\hat E_\pm)_3 - k^2 (\hat E_\pm)_3 = - \omega^2\perm_\pm(\hat E_\pm)_3 -\hat r_3 \in L^2 (\R^2_\pm).$$
			This shows (\ref{eq14}) and the proposition is now proved.
			\epf

			\subsection{Eigenvalues of $\cL$}\label{S:evals-2D}

			The point spectrum consists only of a subset of the singular set $\Omega_0$, as the following lemmas show.
			\blem\label{L:pt-sp-2D-red}
			\beq\label{E:disc-sp-2D}
			\sigma^{{\rm red}}_p(\cL)=\emptyset.
			\eeq
			\elem

			\bpf
			Let $\omega \in \sigma_p(\cL)\setminus \Omega_0$. Once again, we apply the Fourier transform - this time to \eqref{E:NL-eval-2D} with $E$ denoting the associated eigenfunction. We obtain \eqref{E:NL-eval} for $\hat{E}(\cdot,k)$ instead of $u$. As this is an ODE for each $k$, we get, as in \eqref{E:psi12_form}, that $\hat{E}_3=0$ and
			\beq\label{E:psi-efn-2D}
			\begin{pmatrix}\hat{E}_1\\\hat{E}_2\end{pmatrix}(x_1,k)=\begin{cases}v_+(k)e^{-\mu_+(k) x_1}, & x_1>0, \\ v_-(k)e^{\mu_-(k) x_1}, & x_1<0
			\end{cases}
			\eeq
			with $\Re(\mu_\pm)>0$.

			The calculations of the case $\omega\in D(\perm)\setminus \Omega_0$ in Lemma \ref{L:efn-1D} apply and produce $\mu_\pm(k)=\sqrt{k^2-\omega^2\perm_\pm(\omega)}$. Necessary conditions for $\hat{E}\in L^2(\R^2,\C^3)$ are
			$$k^2-\omega^2\perm_+(\omega)\notin (-\infty,0], \ k^2-\omega^2\perm_-(\omega)\notin (-\infty,0]$$
			for a.e. $k\in \text{supp}(v_+)\cup \text{supp}(v_-)$. From Proposition \ref{T:pt-spec-1D} we get
			$$\perm_+(\omega)\sqrt{k^2-\omega^2\perm_-(\omega)}=-\perm_-(\omega)\sqrt{k^2-\omega^2\perm_+(\omega)}$$
			for a.e. $k\in \text{supp}(v_+)\cup \text{supp}(v_-)$.

			As the latter condition can be satisfied by at most two values of $k\in \R$ (see \eqref{E:om-ev-cond-neces}) and since $E\neq 0$ implies that $\mbox{meas}(\mbox{supp}(v_+) \cup \mbox{supp}(v_-))>0$, we arrive at a contradiction. Hence, there is no eigenvalue and $\sigma_p^{{\rm red}}(\cL)=\emptyset$ is proved.
			\epf

			\blem\label{L:pt-sp-2D-sing}
			$$
			\begin{aligned}
				\sigma_p^{(\infty)}(\cL)\cap \Omega_0  &=  ~\{\omega \in \Omega_0: \perm_+(\omega)=0 \text{ or } \perm_-(\omega)=0 \text{ or } \perm_+(\omega)+\perm_-(\omega)=0 \},\\
				\sigma_p^{(<\infty)}(\cL)\cap \Omega_0  &=  ~\emptyset.
			\end{aligned}
			$$
			\elem
			\bpf
			We consider first the case $\omega \in D(\perm)$ such that
			$$\perm_+(\omega)=0 \ \text{or} \ \perm_-(\omega)=0,$$
			which clearly implies $\omega\in \Omega_0$. Without any loss of generality let $\perm_+(\omega)=0$. Then the divergence condition $\nabla \cdot (\perm E)=0$ makes no statement about $E$ on $\R^2_+$ and we have
			$$\cD_\omega=\{E\in L^2(\R^2,\C^3): \nabla\times E,\nabla\times \nabla\times E \in L^2(\R^2,\C^3), \nabla \cdot (\perm_-(\omega)E) =0 \text{ distrib. in } \R^2_-, \perm_-(\omega)E_1(0-,\cdot)=0\}.$$
			Then an infinite dimensional eigenspace exists again due to the fact that gradient fields lie in the kernel of the curl operator. In detail,
			$$E= \bspm \pa_{x_1}f\\ \pa_{x_2}f\\ 0\espm \quad \text{with} \  f\in C^2_c(\R^2_+) \ \text{arbitrary},$$
			is an eigenfunction. Hence $\omega$ is an eigenvalue of infinite multiplicity.

			Next, we study the case
			$$\omega \in \Omega_0, \ \perm_+(\omega)\neq 0, \perm_-(\omega)\neq 0.$$
			Clearly, $\omega =0$ and hence $\omega^2\perm_+(\omega)=\omega^2\perm_-(\omega)=0$. The third equation in \eqref{E:NL-eval-2D} reads
			$$\Delta E_3=0,$$
			which together with $E_3\in L^2(\R^2)$ implies $E_3=0.$ The Fourier transform (in $x_2$) of the first two equations is
			\beq\label{E:E12-eq-red}
			\begin{aligned}
				k^2\hat{E}_1 + \ri k \pa_{x_1}\hat{E}_2& = 0,\\
				\ri k \pa_{x_1} \hat{E}_1-\pa_{x_1}^2\hat{E}_2&=0
			\end{aligned}
			\eeq
			for $(x,k)\in \R_\pm\times \R$. In addition, the divergence condition $\pa_{x_1} \hat{E}_1 + \ri k \hat{E}_2=0$ on $\R^2_\pm$ has to be satisfied in order for $E\in \cD_\omega$. For $k\neq 0$  we get
			$$\pa_{x_1} \bspm\hat{E}_1\\ \hat{E}_2\espm =\bspm 0 & -\ri k \\ \ri k & 0\espm \bspm\hat{E}_1\\ \hat{E}_2\espm$$
			from the divergence condition and the first equation in \eqref{E:E12-eq-red}. The second equation in \eqref{E:E12-eq-red} then follows automatically.
			Solutions with $\hat{E}(\cdot,k)\in L^2(\R)$ have the form
			$$ \bspm\hat{E}_1\\ \hat{E}_2\espm(x_1,k)=v_\pm(k)e^{\mp |k|x_1}, \ x_1\in \R_\pm,$$
			where $v_+=A(\ri k, |k|)^T$ and $v_-=B(-\ri k, |k|)^T$ with $A=A(k)\in \C, B=B(k)\in \C$ arbitrary. For the rest of the conditions in $E\in \cD_\omega$ we use the formulation \eqref{eq13}-\eqref{eq16}. Equation \eqref{eq15} is satisfied by construction. The conditions $\llbracket E_2\rrbracket =0$ and $\llbracket \perm E_1\rrbracket =0$ for a.e. $k\in \R$ are satisfied with nontrivial $(A,B)$ if and only if
			\beq\label{E:IFC-AB}
			A(k)=B(k)  \text{  for almost every $k\in \R$ and  } \perm_+(\omega)=-\perm_-(\omega)
			\eeq
			as one easily checks (similarly to the proof of Lemma \ref{L:efn-1D}).

			Note that this also shows that $\{\omega\in \Omega_0: \perm_\pm(\omega)\neq 0, \perm_+(\omega)\neq -\perm_-(\omega)\}\subset \Omega_0 \setminus \sigma_p(\cL)$. The condition $\llbracket \pa_{x_1} \hat{E}_3\rrbracket =0$ holds trivially and $\llbracket \pa_{x_1} \hat{E}_2(\cdot,k)-\ri k\hat{E}_1(\cdot,k)\rrbracket =0$ holds also trivially because $\pa_{x_1}\hat{E}_2(\cdot,k)
			-\ri k \hat{E}_1(\cdot,k)=0$ if $k\neq 0$. This covers the conditions in \eqref{eq16}.

			Because the coefficient $A(k)=B(k)$ can be chosen so that $\hat{E}_1, \hat{E}_2\in L^2(\R^2)$ and \eqref{eq13}-\eqref{eq14} hold, we get $\omega\in \sigma_p(\cL)$ if $\perm_+(\omega)=-\perm_-(\omega)$.  Indeed, for instance for $\hat{E}_1$ we get
			$$\|\hat{E}_1\|^2_{L^2(\R_+\times \R)}=\int_\R k^2|A(k)|^2\int_0^\infty e^{-2|k|x_1}\dd x_1 \dd k = \int_\R \frac{|k||A(k)|^2}{2}\dd k$$
			and choosing, e.g. $A(k):=e^{-\alpha |k|}$ with any $\alpha>0$ ensures that $\hat{E}_1$ as well as all $x_1$-derivatives are in $L^2(\R_+\times \R)$. Setting $B=A$, yields also $\hat{E}_1 \in L^2(\R_-\times \R)$.

			Clearly, due to the freedom in the choice of $\alpha$, the eigenvalue $\omega=0$ has infinite multiplicity, too, i.e. it is an element of $\sigma_p^{(\infty)}(\cL)$.

			To summarize, note that $\Omega_0=A~\dot{\cup}~B~\dot{\cup}~C$, where $A:=\{\omega \in D(\perm): \perm_+(\omega)=0 \ \text{or} \ \perm_-(\omega)=0\}$, $B:=\{\omega\in \Omega_0: \perm_\pm(\omega)\neq 0, \perm_+(\omega)+\perm_-(\omega)=0\}$, and
			$C:=\{\omega\in \Omega_0: \perm_\pm(\omega)\neq 0, \perm_+(\omega)+\perm_-(\omega)\neq 0\}$. We have shown that $A\cup B\subset \sigma_p^{(\infty)}(\cL)$ and $C\subset \Omega_0\setminus \sigma_p(\cL)$. This yields all statements of the lemma.
			\epf

			\brem \label{R:evals-mu-2d}
			In the more general setting with $\boldsymbol{\mu}=\boldsymbol{\mu}(x_1,\omega)$ additional eigenvalues of infinite multiplicity are expected to arise. These are zeroes of $\boldsymbol{\mu}(x_1,\cdot)$. These eigenvalues were proved in the first order formulation of a specific self-adjoint case in \cite{CHJ2017}  (Proposition 8) with $\boldsymbol{\mu}$ and $\perm$ constant on each $\R^2_\pm$.
			\erem
			\subsection{The Weyl Spectrum of $\cL$}

			We proceed by locating the Weyl spectrum.

			\blem\label{L:ess-sp2D-x1}
			$$\sweyl(\cL)\supset M_+\cup M_-\cup\Omega_0 = \{\omega \in D(\perm): \omega^2\perm_+(\omega)\in [0,\infty) \text{  or  } \omega^2\perm_-(\omega)\in [0,\infty)\}.$$
			\elem
			\bpf
			Without loss of generality, we assume that $\omega^2\perm_+(\omega)\in [0,\infty)$.

			Again, similar to the proof of Prop. \ref{L:Weyl_sp_Tk} at least for $\omega^2\perm_+(\omega)>0$ a Weyl sequence with the first two components being non-zero exists. However, we opt for a much simpler sequence with only the third component being non-zero. As the action of the operator $\cL$ acting on a function of the form $(0,0,f)^T$ reduces to the action of $-\pa_{x_1}^2-\pa_{x_2}^2-\omega^2\perm(x_1,\omega)$ on $f$, we are left with studying only this Laplace-type operator.

			We choose a plane-wave solution on $x_1>0$ truncated smoothly to have a compact support which grows and moves to $x_1\to \infty$ as the sequence index grows to infinity. In detail
			$$u^{(n)}(x_1,x_2):=\frac{e^{\ri (\beta_1 x_1 +\beta_2 x_2)}}{n} \varphi\left(\frac{x_1-n^2}{n},\frac{x_2}{n}\right)\bspm 0 \\ 0 \\1\espm,$$
			where $\beta_1^2+\beta_2^2 = \omega^2\perm_+(\omega)$, $\varphi \in C^\infty_c(\R^2)$, and $\|\varphi\|_{L^2(\R^2)}=1$.

			To see that $u^{(n)} \in \cD_{\omega}$, note that the divergence condition as well as the regularity  conditions hold trivially and the interface can be ignored for $n$ large enough as the support of  $u^{(n)}$ lies in $\R^2_+$ for $n$ large enough. The normalization $\|u^{(n)}\|_{L^2(\R^2)}=1$ holds by the choice of $\varphi$.

			Next, we show that $\|L(\omega)u^{(n)}\|_{L^2(\R^2)}\to 0$. As for $n$ large enough
			$$
			\begin{aligned}
				L(\omega)u^{(n)}&= -\Delta u^{(n)}_3- \omega^2\perm_+(\omega) u^{(n)}_3\\
				&=e^{\ri (\beta_1 x_1 +\beta_2 x_2)}\left(n^{-3}\Delta \varphi\left(\frac{x_1-n^2}{n},\frac{x_2}{n}\right) -2\ri n^{-2}(\beta_1\pa_{x_1}\varphi+\beta_2\pa_{x_2}\varphi)\left(\frac{x_1-n^2}{n},\frac{x_2}{n}\right)\right),
			\end{aligned}
			$$
			we get
			$$\|L(\omega)u^{(n)}\|_{L^2(\R^2)} \leq n^{-2}\|\Delta \varphi\|_{L^2(\R^2)}+cn^{-1}(\|\pa_{x_1}\varphi\|_{L^2(\R^2)}+\|\pa_{x_2}\varphi\|_{L^2(\R^2)})\to 0.$$
			Finally, to show $u^{(n)}\rightharpoonup 0$ we only need to observe that for $\eta \in L^2(\R^2)$
			$$
			\begin{aligned}
				n^{-1}&\left|\int_{\R^2} e^{\ri(\beta_1 x_1+\beta_2 x_2)} \varphi\left(\frac{x_1-n^2}{n},\frac{x_2}{n}\right) \overline{\eta}(x_1,x_2)\dd x\right|\\
				&\leq n^{-1}\int_{[n,\infty)\times \R} \left|\varphi\left(\frac{x_1-n^2}{n},\frac{x_2}{n}\right)\right||\eta(x_1,x_2)|\dd x \qquad \text{for $n$ large enough}\\
				&\leq \|\varphi\|_{L^2(\R^2)} \|\eta\|_{L^2([n,\infty)\times \R))} \to 0.
			\end{aligned}
			$$

			Note that in the case $\perm_+(\omega)=0$ we can easily construct another Weyl sequence. Namely, in Lemma \ref{L:pt-sp-2D-sing} we have shown that $\omega$ is an eigenvalue of infinite multiplicity,  and hence an $L^2(\R^2)$-orthonormal sequence $u^{(n)}$ in the eigenspace of $\omega$ is clearly a Weyl sequence as orthonormal sequences converge weakly to zero.

			The case $\omega^2\perm_-(\omega)\in [0,\infty)$ is analogous and one uses a Weyl sequence moving to $x_1\to-\infty$.
			\epf

			\blem\label{L:ess-sp2D-x2}
			\beq\label{E:NsubWeyl}
			\begin{aligned}
				\sweyl(\cL)\supset N= \{&
				\omega \in D(\perm)\setminus \Omega_0: \omega^2\perm_+(\omega),\omega^2\perm_-(\omega)\notin [a,\infty),\\ &\perm_+(\omega)\sqrt{a-\omega^2\perm_-(\omega)}=-\perm_-(\omega)\sqrt{a-\omega^2\perm_+(\omega)} \ \text{for some} \ a\geq 0\}.
			\end{aligned}
			\eeq
			\elem
			\bpf
			Remark \ref{R:Nequiv} explains why the set equality in \eqref{E:NsubWeyl} holds.
			Let $\omega \in D(\perm)\setminus \Omega_0$ and $a\geq 0$ be such that
			\beq\label{E:om-eq-eval}
			\perm_+(\omega)\sqrt{a-\omega^2\perm_-(\omega)}=-\perm_-(\omega)\sqrt{a-\omega^2\perm_+(\omega)},
			\eeq
			and $a-\omega^2\perm_+(\omega)\notin (-\infty,0], a-\omega^2\perm_-(\omega)\notin (-\infty,0]$. Set $k_0:=\sqrt{a}$. Let us first explain that $a=0$ is not possible (allows no solutions of \eqref{E:om-eq-eval}).
			Squaring equation \eqref{E:om-eq-eval} for $a=0$ leads to $\perm_+(\omega)=\perm_-(\omega)$. Inserting this back into \eqref{E:om-eq-eval}, we obtain  $\sqrt{-\omega^2\perm_+(\omega)}=-\sqrt{-\omega^2\perm_+(\omega)}$, which is a contradiction to  $\omega \in \C\setminus \Omega_0$. Hence $k_0=\sqrt{a}> 0$.

			Our choice of a Weyl sequence (for a fixed $\omega$) is given by the plane-wave $e^{\ri k_0 x_2}$ times the $x_1-$dependent (exponentially decaying) eigenfunction $\psi$ of $L_{k_0}(\omega)$. The plane wave is smoothly cut off to have compact support with the support moving to $x_2\to \pm \infty$. First we construct a sequence $w^{(n)}$ which has all the Weyl sequence properties apart from the interface condition for the third component of its curl:
			\beq\label{E:Weylseq-2D-x2}
			w^{(n)}(x):=c_n(v_n(x)+r_n(x)):=\left(n^{-3/2}\varphi_n\left(\frac{x_2-n^2}{n}\right)\psi(x_1)e^{\ri k_0 x_2}+r_n(x)\right),
			\eeq
			where
			$$\psi(x_1):=\begin{cases}v_+e^{-\mu_+ x_1}, & x_1>0,\\
				v_-e^{\mu_- x_1}, & x_1<0,
			\end{cases}\quad \mu_\pm :=\sqrt{k_0^2-\omega^2\perm_\pm(\omega)}, v_+ =\bspm \ri k_0\\
			\mu_+\\ 0\espm, v_- =\frac{\mu_+}{\mu_-}\bspm -\ri k_0\\ \mu_-\\ 0\espm,
			$$
			and where
			$$\hat{\varphi}_n(k):=(k+nk_0)\hat{\varphi}(k), \ k \in \R \quad \text{ with } \quad \varphi \in C^\infty_c(\R), \|\varphi\|_{L^2(\R)}=1,$$
			and

			$$
			\hat{r}_n(x_1,k):= - n^{-1/2} \frac{k-k_0}{k}\hat{\varphi}_n(n(k-k_0))\bspm 0\\ \psi_2(x_1) \\ 0\espm e^{-\ri (k-k_0)n^2}.
			$$
			The constants $c_n>0$ are chosen such that $\|w^{(n)}\|_{L^2(\R^2)}=1$ for all $n\in \N$. Like in the proof of Lemma \ref{L:ess-sp2D-x1}, for a function $f:\R^2\to \C^3$ we use $\hat{f}$ to denote the Fourier transform of $f$ with respect to the second variable.

			Note that unlike at many other instances, here the somewhat complicated Weyl sequence with non-zero first two components has to be chosen since we need the localized modes (eigenfunctions) from the one-dimensional case as building blocks for these Weyl sequences. The eigenfunctions have a vanishing third component.

			The correction term $\hat{r}_n$ ensures that $\nabla \cdot w^{(n)}=0$ or equivalently $\pa_{x_1} \hat{w}_1^{(n)}(x_1,k)+\ri k \hat{w}_2^{(n)}(x_1,k)=0$ for a.e. $x_1,k\in \R$. Indeed, the Fourier transform of $v^{(n)}$ is
			$$\hat{v}^{(n)}(x_1,k)=n^{-1/2}\psi(x_1)e^{-\ri (k-k_0)n^2}\hat{\varphi}_n(n(k-k_0))$$
			and
			$$\pa_{x_1}\hat{v}_1^{(n)} + \ri k\hat{v}_2^{(n)}=n^{-1/2}(\psi_1'(x_1)+\ri k\psi_2(x_1))e^{-\ri (k-k_0)n^2}\hat{\varphi}_n(n(k-k_0)).$$
			Because $\psi_1'+\ri k_0\psi_2=0$ on $\R_+\cup \R_-$ (which follows from the curl-curl structure of the eigenvalue equation   for $\psi$ and from the fact that $\perm$ is constant on $\R_+$ and on $\R_-$), we get
			$$\pa_{x_1}\hat{v}_1^{(n)} + \ri k\hat{v}_2^{(n)}=n^{-1/2}\ri (k-k_0)\psi_2(x_1)e^{-\ri (k-k_0)n^2}\hat{\varphi}_n(n(k-k_0))$$
			on $\R^2_+\cup \R^2_-.$
			Then it follows  that
			$$\pa_{x_1}(\hat{v}_1^{(n)}+ \hat{r}_{n,1}) + \ri k(\hat{v}_2^{(n)}+\hat{r}_{n,2})=0.$$

			We have $\|r_n\|_{L^2(\R^2)}\to 0$ as $n\to \infty$ because
			$$\|\hat{r}_n\|_{L^2(\R^2)}^2= n^{-1}\|\psi_2\|_{L^2(\R)}^2\int_\R n^2(k-k_0)^2|\hat{\varphi}(n(k-k_0))|^2\dd k = n^{-2}\|\psi_2\|_{L^2(\R)}^2\|\widehat{\varphi'}\|_{L^2(\R)}^2\leq cn^{-2}.$$
			For the leading order part $v^{(n)}$ we have
			$$\|\hat{v}^{(n)}\|_{L^2(\R^2)}^2=n^{-1}\|\psi\|_{L^2(\R)}^2\int_{\R}|\hat{\varphi}_n(n(k-k_0))|^2\dd k =  n^{-2}\|\psi\|_{L^2(\R)}^2\int_\R (\kappa+nk_0)^2|\hat{\varphi}(\kappa)|^2\dd \kappa$$
			and hence, because $k_0\neq 0$, there are $\alpha,\beta>0$ such that $\alpha \leq \|v^{(n)}\|_{L^2(\R^2)} \leq \beta$ for all $n$. Together with $\|r_n\|_{L^2(\R^2)}\to 0$ this means that the condition $\|w^{(n)}\|_{L^2(\R^2)}=1$ implies $c_n=O(1)$ and $c_n\nrightarrow 0$ as $n\to \infty$.

			Next we show that $L(\omega)w^{(n)} \to 0 $ in $L^2(\R^2)$. In the following we denote by $\psi'$ and $\psi''$ the derivatives of $\psi$ defined piece-wise on $\R_-$ and $\R_+$. Accordingly, $\|\psi\|_{H^1(\R)}^2:=\|\psi\|_{H^1(\R_-)}^2+\|\psi\|_{H^1(\R_-)}^2$.

			In the Fourier variables we have
			\begin{align}
				&L_k(\omega)\hat{w}^{(n)}(x_1,k)\notag\\
				&=c_n \left[ \frac{1}{\sqrt{n}}e^{-\ri (k-k_0)n^2}\hat{\varphi}_n(n(k-k_0)) \bspm k^2 -\omega^2\perm & \ri k \pa_{x_1} & 0\\ \ri k \pa_{x_1} & - \pa_{x_1}^2-\omega^2\perm(\omega) & 0 \\
				0 & 0 & k^2-\pa_{x_1}^2-\omega^2\perm
				\espm\psi(x_1) + \bspm \ri k \pa_{x_1} \hat{r}_{n,2}(x_1,k)\\
				(-\pa_{x_1}^2-\omega^2\perm(\omega))\hat{r}_{n,2}(x_1,k) \\ 0 \espm\right]\notag\\
				&=\frac{c_n}{\sqrt{n}}e^{-\ri (k-k_0)n^2}\hat{\varphi}_n(n(k-k_0)) \left[\bspm k^2-k_0^2 & \ri (k-k_0) \pa_{x_1} & 0\\ \ri (k-k_0) \pa_{x_1} & 0 & 0 \\
				0 & 0 & 0\espm\psi(x_1) +
				\bspm -\ri(k-k_0)\psi_2'(x_1)\\
				\frac{k-k_0}{k}(\psi_2''+\omega^2\perm(\omega)\psi_2)(x_1)\\ 0 \espm \right],\label{E:Tun-corr2D}
			\end{align}
			where we have used $T_{k_0}(\pa_{x_1})\psi-\omega^2\perm(\omega)\psi=0$ and $\psi_3=0$.

			We rewrite the first term in \eqref{E:Tun-corr2D} as
			$$F(x_1,k):=\frac{c_n}{\sqrt{n}}e^{-\ri (k-k_0)n^2}k\hat{\varphi}(n(k-k_0)) \bspm (k+k_0)n(k-k_0)\psi_1(x_1)+\ri n(k-k_0)\psi_2'(x_1)\\ \ri n(k-k_0)\psi_1'(x_1)\\
			0\espm$$
			and get
			$$\|F\|_{L^2(\R^2)}^2\leq cn^{-2} \int_\R\kappa^2\left(k_0+\frac{\kappa}{n}\right)^2|\hat{\varphi}(\kappa)|^2 \left[\|\psi_2'\|_{L^2(\R)}^2 + \left(2k_0+\frac{\kappa}{n}\right)^2 \|\psi_1\|_{L^2(\R)}^2 +\|\psi_1'\|_{L^2(\R)}^2\right]\dd \kappa\leq cn^{-2}.$$
			The second term in \eqref{E:Tun-corr2D} is
			$$G(x_1,k):=\frac{c_n}{\sqrt{n}}e^{-\ri (k-k_0)n^2}n(k-k_0)\hat{\varphi}(n(k-k_0)) \bspm -\ri k\psi_2'(x_1)\\ (\psi_2''+\omega^2\perm(\omega)\psi_2)(x_1)\\
			0\espm.$$
			We estimate analogously
			$$\|G\|_{L^2(\R^2)}^2\leq cn^{-2}\int_\R|\hat{\varphi}(\kappa)|^2
			\kappa^2 \left[\|\psi_2''+\omega^2\perm(\omega)\psi_2\|_{L^2(\R)}^2
			+ \left(k_0+\frac{\kappa}{n}\right)^2\|\psi_2'\|_{L^2(\R)}^2\right]\dd\kappa\leq cn^{-2}.$$

			Now, we show $w^{(n)}\rightharpoonup 0$. Let $\eta\in L^2(\R^2,\C^3)$ be arbitrary. Because $r_n\to 0$ in $L^2(\R^2)$, we only need to show that
			\beq\label{E:un-weaktozero-2D}
			\begin{aligned}
				I_n:=&n^{-3/2}\int_{\R^2}\varphi_n\left(\frac{x_2-n^2}{n}\right)e^{\ri k_0 x_2}\psi(x_1) \cdot \overline{\eta}(x_1,x_2)\dd x_1 \dd x_2\to 0.
			\end{aligned}
			\eeq
			From the relation $\hat{\varphi}_n(k):=(k+nk_0)\hat{\varphi}(k)$ we get $\varphi_n(x_2)=-\ri \varphi'(x_2)+nk_0\varphi(x_2)$ and
			$$
			\begin{aligned}
				|I_n|=&n^{-1/2}\int_{\R^2}\left|k_0\varphi\left(\frac{x_2-n^2}{n}\right) \right||\psi(x_1)\cdot \overline{\eta}(x_1,x_2)| \dd x_1\dd x_2 \\
				+ &n^{-3/2}\int_{\R^2}\left|\varphi'\left(\frac{x_2-n^2}{n}\right) \right||\psi(x_1)\cdot \overline{\eta}(x_1,x_2)| \dd x_1\dd x_2\\
				=&n^{-1/2}\int_{\R^2\setminus \R\times [-n,n]}\left|k_0\varphi\left(\frac{x_2-n^2}{n}\right) \right||\psi(x_1)\cdot \overline{\eta}(x_1,x_2)| \dd x_1\dd x_2 \\
				+ &n^{-3/2}\int_{\R^2\setminus \R\times [-n,n]}\left|\varphi'\left(\frac{x_2-n^2}{n}\right) \right||\psi(x_1)\cdot \overline{\eta}(x_1,x_2)| \dd x_1\dd x_2
			\end{aligned}
			$$
			for $n$ large enough because of the compact support of $\varphi$. With the Cauchy-Schwarz inequality we arrive at
			$$|I_n|\leq c \|\psi\|_{L^2(\R)}\|\eta\|_{L^2(\R^2\setminus \R\times [-n,n])}\left(\|\varphi\|_{L^2(\R)} + \|\varphi'\|_{L^2(\R)}\right) \to 0.$$

			Because
				$$\llbracket \perm(\omega) \psi_1\rrbracket = \llbracket \psi_2\rrbracket =\llbracket \psi_3\rrbracket =\llbracket \pa_{x_1}\psi_3\rrbracket = 0,$$
				we get
				$$\llbracket \perm(\omega) \hat{w}^{(n)}_1\rrbracket = \llbracket \hat{w}^{(n)}_2\rrbracket =\llbracket \hat{w}^{(n)}_3\rrbracket =\llbracket \pa_{x_1}\hat{w}^{(n)}_3\rrbracket =0
				$$
				for almost all $k\in\R$. Also, all the $L^2$-conditions in \eqref{eq14} are clearly satisfied.

			The last condition for $w^{(n)}\in \cD_\omega$ (and subsequently for $(w^{(n)})$ to be a Weyl sequence) is the jump condition
				\beq\label{E:curl-jump}
				\llbracket \pa_{x_1}\hat{w}^{(n)}_2- \ri k \hat{w}^{(n)}_1\rrbracket =0
				\eeq
				for almost all $k\in\R$. This, however, fails because we have only $\llbracket \pa_{x_1}\psi_2- \ri k_0 \psi_1\rrbracket =0$, which guarantees \eqref{E:curl-jump} for $k=k_0$ but not for other $k\in \R$. To fix this, we modify the sequence $w^{(n)}$. The final Weyl sequence (in the Fourier variables $(x_1,k)$) is
				$$\hat{u}^{(n)}:=b_n(\hat{w}^{(n)}-\alpha_n \hat{s}), \quad s=(s_1, s_2, 0)^T,$$
				where
				\beq\label{E:cond-s}
				\nabla_k\cdot \hat{s}=0, \quad  \llbracket \hat{s}_2\rrbracket=\llbracket \perm\hat{s}_1\rrbracket=0, \quad  \llbracket \pa_{x_1}\hat{s}_2-\ri k  \hat{s}_1\rrbracket=1,
				\eeq
				$$\alpha_n=\alpha_n(k):=\llbracket \pa_{x_1}\hat{w}^{(n)}_2-\ri k\hat{w}^{(n)}_1\rrbracket \in\C,$$
				and $b_n>0$ is selected to ensure $\|u^{(n)}\|_{L^2(\R^2)}=1.$

			In order for $\hat{s}$ to satisfy \eqref{E:cond-s}, we need $$\hat{s}_2=\frac{\ri}{k}\pa_{x_1}\hat{s}_1, \quad \llbracket \pa_{x_1}\hat{s}_1\rrbracket=\llbracket\perm \hat{s}_1\rrbracket=0, \quad \text{and} \quad \llbracket -\pa_{x_1}^2\hat{s}_1+k^2\hat{s}_1\rrbracket=\ri k.$$
				The following choice fulfils the above conditions as well as the $L^2$-conditions in \eqref{eq14}:
				$$ \hat{s}_1(x_1,k):=\begin{cases}
					\perm_-\zeta(x_1), & x_1>0,\\
					\left(\perm_++\frac{1}{2}(\ri k -k^2 (\perm_- - \perm_+))x_1^2\right)\zeta(x_1), & x_1<0,
				\end{cases}$$
				and
				$$\hat{s}_2:=\frac{\ri}{k}\pa_{x_1} \hat{s}_1,$$
				where $\zeta \in C_c^\infty(\R)$ is chosen such that $\zeta(0)=1$ and $\zeta'(0)=\zeta''(0)=0.$ The coefficient $\alpha_n$ is easily calculated to be
				$$\alpha_n(k) = -c_nn^{-1/2}\llbracket\psi_2'+\ri k  \psi_1 \rrbracket n(k-k_0)\hat{\varphi}(n(k-k_0))e^{-\ri (k-k_0)n^2}.$$

			Because
				\beq\label{E:alpha-est}
				\begin{aligned}
					\|\alpha_n \hat{s}\|_{L^2(\R^2)}^2
					&\leq cn^{-1} \int_\R (1+k^2)^2n^2(k-k_0)^2|\hat{\varphi}(n(k-k_0))|^2\dd k\\
					&\leq cn^{-2}\int_\R \kappa^2 \left(1+(k_0+\tfrac{\kappa}{n})^4\right)|\hat{\varphi}(\kappa)|^2\dd \kappa \leq cn^{-2} \to 0,
				\end{aligned}
				\eeq
				and $\|\hat{w}^{(n)}\|=1$, the coefficients $b_n$ are bounded and the weak convergence $u^{(n)}\rightharpoonup 0$ holds. Finally, to check the convergence $L(\omega)(\alpha_n s)\to 0$, note that
				$$L_k(\omega)(\alpha_n \hat{s})=\alpha_n\begin{pmatrix}
					k^2\hat{s}_1+\ri k \pa_{x_1}\hat{s}_2-\omega^2\perm(\omega)\hat{s}_1\\
					\ri k \pa_{x_1}\hat{s}_1-\pa_{x_1}^2\hat{s}_2-\omega^2\perm(\omega)\hat{s}_2\\
					0
				\end{pmatrix}.$$
				Similar estimates to \eqref{E:alpha-est} show that each term in the above expression converges to 0 in the $L^2$-norm.
			\epf

			\subsection{Proof of Theorem \ref{T:main2D}}

			Combining Lemmas \ref{L:ess-sp2D-x1} and \ref{L:ess-sp2D-x2} yields, since $\Omega_0 \cap (M_+\cup M_-\cup N)=\emptyset$,
			\beq\label{E:Weyl-sp-2D}
			\sweyl(\cL)\setminus \Omega_0\supset M_+\cup M_- \cup N.
			\eeq
			By Proposition \ref{th2} we have
			\beq\label{E:sp-2D-red}
			\sigma(\cL)\setminus \Omega_0\subset  M_+\cup M_-\cup N.
			\eeq
			Equations \eqref{E:Weyl-sp-2D} and \eqref{E:sp-2D-red} imply
			$$\sigma(\cL)\setminus \Omega_0 = \sweyl(\cL)\setminus \Omega_0= M_+\cup M_- \cup N,$$
			which proves \eqref{E:2Dsp-red} and the second equality in \eqref{E:2Dess-red}.

			Equation \eqref{E:2Dptsp-red} is the statement of Lemma \ref{L:pt-sp-2D-red}.

			Using Remark \ref{rem:essential}, we get $\sweyl(\cL)=\stwo(\cL)$ and hence also  $\sweyl(\cL)\setminus \Omega_0=\stwo(\cL)\setminus \Omega_0$. Because $\sigma(\cL)\setminus \Omega_0=\sweyl(\cL)\setminus \Omega_0$ and $\sone \subset \stwo\subset \sthree\subset \sfour\subset \sfive\subset \sigma$, we arrive at  $\sone(\cL)\setminus \Omega_0 \subset \stwo(\cL)\setminus \Omega_0= \sthree(\cL)\setminus \Omega_0= \sfour(\cL)\setminus \Omega_0 = \sfive(\cL)\setminus \Omega_0=\sweyl(\cL)\setminus \Omega_0$.

			Finally, we show that $\stwo(\cL) \setminus \Omega_0\subset \sone(\cL)\setminus \Omega_0$.
			Recall that
			$$
			\begin{aligned}
				\sone(\cL)&=\{\omega\in \C: \Ran(L(\omega)) \text{ is not closed or } \dim\ker(L(\omega))= \dim\coker(L(\omega))=\infty\},\\
				\stwo(\cL)&=\{\omega\in \C: \Ran(L(\omega)) \text{ is not closed or } \dim\ker(L(\omega))=\infty\}.
			\end{aligned}
			$$
			Let $\omega \in \C \setminus \Omega_0$. As we have shown in Lemma \ref{L:pt-sp-2D-red}, the equality $\sigma_p^{{\rm red}}(\cL)=\emptyset$ holds; hence $\ker L(\omega)=\{0\}$. In conclusion, $\sone(\cL) \setminus \Omega_0=\stwo(\cL)\setminus \Omega_0$. This proves the statement about the essential spectra in \eqref{E:2Dess-red} and completes the proof of Theorem \ref{T:main2D}.

			\subsection{Proof of Theorem \ref{T:main2D_Om0}}

			Equation \eqref{E:2Dptsp-Om0} is the statement of Lemma \ref{L:pt-sp-2D-sing} and the second equality in \eqref{E:2Dess-Om0} (and consequently \eqref{E:2Dsp-Om0}) follows from Lemma \ref{L:ess-sp2D-x1}. Thus also the first equality in \eqref{E:2Dess-Om0} holds for $j=2,\dots,5$.

			Once again, as $\sone \subset \stwo \subset \dots \subset \sfive$ and $\stwo=\sigma_{\text{Weyl }}$, the only remaining part to be shown is $\sone(\cL)\cap\Omega_0=\stwo(\cL)\cap\Omega_0$. For that let $\omega \in \Omega_0$ and w.l.o.g. assume $\omega^2\perm_+(\omega)=0$. We show that in this case $\Ran(L(\omega))\subset \cR$ is not closed, and therefore $\omega$ lies in  $\sone(\cL)$.

			For the non-closedness of  $\Ran(L(\omega))$ we use Lemma \ref{L:ran_not-closed}. Hence, it suffices to find a Weyl sequence in $V(\omega):=\ker(L(\omega))^\perp\cap \cD_\omega$. W.l.o.g. we assume $\omega^2\perm_+(\omega)=0$.
			We construct a suitable Weyl sequence $(u^{(n)})$ by choosing a bounded solution of $L(\omega)\xi=0$ on $\R^2_+$ and smoothly cutting it off to have a compact support with the support moving out to $x_1=\infty$ as $n\to \infty$. The simplest bounded solution is $\xi = \bspm 0\\0\\1\espm$.

			Let $w\in C_c^\infty(\R^2_+,\R)$ such that $\|\Delta w\|_{L^2(\R^2)}=1$. Set $\varphi=\Delta w$ and
			$$u^{(n)}(x_1,x_2):=\frac{1}{n}\varphi\left(\frac{x_1-n^2}{n},\frac{x_2}{n}\right)\bspm 0\\0\\1\espm.$$
			Then $\|u^{(n)}\|_{L^2(\R^2)}=1$ for all $n$ and $u^{(n)}_3=\Delta w^{(n)}$, where $w^{(n)}(x_1,x_2)=nw\left(\frac{x_1-n^2}{n},\frac{x_2}{n}\right)\in C_c^\infty(\R^2_+,\R)$.

			We first show that this sequence lies in the orthogonal complement of the kernel. From  \eqref{E:curl}, we see that if $\psi\in \ker(L(\omega))$, then $\psi_3$ is harmonic. Therefore, for such $\psi$ we have
			$$\int_{\R^2} \overline{\psi}^T u^{(n)}\dd x   = \int_{\R^2} \overline{\psi_3} u^{(n)}_3 \dd x  = \int_{\R^2} \overline{\psi_3} \Delta w^{(n)}\dd x= \int_{\R^2} \overline{\Delta\psi_3} w^{(n)}\dd x=0,$$
			where the boundary terms vanish since $w^{(n)}\in C_c^\infty(\R^2_+,\R)$. Hence, $u^{(n)}\in\ker(L(\omega))^\perp$.

			The divergence condition is trivially satisfied by $u^{(n)}$ and so are the interface conditions for all $n$ large enough.
			Moreover,
			after Fourier transform in the $x_2$-variable, we have
			$$\hat{u}^{(n)}(x_1,k)=\hat\varphi\left(\frac{x_1-n^2}{n},nk\right)\bspm 0\\0\\1\espm,$$
			where $\varphi\in C^\infty_c(\R^2_+,\R)$ and $\|\hat\varphi\|_{L^2(\R^2)}=1$ for all $n$.  Because
			$$
			\begin{aligned}
				\widehat{L(\omega)u^{(n)}}(x_1,k)=L_k(\omega)\hat{u}^{(n)}(x_1,k)&=(0,0,k^2-\pa_{x_1}^2)^T\left(\hat\varphi\left(\frac{x_1-n^2}{n},nk\right)\right)\\
				&=(0,0,(k^2\hat\varphi-n^{-2}\pa_{x_1}^2\hat\varphi)(\tfrac{x_1-n^2}{n},nk))^T,
			\end{aligned}
			$$
			we get
			$$\|L(\omega)u^{(n)}\|_{L^2(\R^2)}^2=n^{-4}\int_{\R^2}\left|\kappa^2\hat\varphi(s,\kappa)-\pa_{x_1}^2\hat\varphi(s,\kappa)\right|^2\dd \kappa\dd s \to 0 \quad (n\to \infty).$$
			Finally, showing $u^{(n)} \rightharpoonup 0$ is analogous to the same step in the proof of Lemma \ref{L:ess-sp2D-x1}. Namely, for any $\eta \in L^2(\R^2,\R)$ we have
			$$
			\begin{aligned}
				\left|\int_{\R^2}\hat\varphi\left(\frac{x_1-n^2}{n},nk\right) \eta(x_1,k)\dd x_1\dd k\right|&=\left|\int_{[n,\infty)\times \R}\hat\varphi\left(\frac{x_1-n^2}{n},nk\right) \eta(x_1,k)\dd x_1\dd k\right| \text{  (for $n$ large enough)}\\
				&\leq \left(\int_{\R^2}\left|\hat\varphi\left(\frac{x_1-n^2}{n},nk\right) \right|^2\dd x_1\dd k\right)^{1/2}\|\eta\|_{L^2([n,\infty)\times \R)}\\
				&= \|\hat\varphi\|_{L^2(\R^2)}^2\|\eta\|_{L^2([n,\infty)\times \R)}=\|\eta\|_{L^2([n,\infty)\times \R)}\to 0 \quad (n \to \infty).
			\end{aligned}
			$$

			The case $\omega^2\perm_-(\omega)=0$ is treated analogously with $u^{(n)}(x_1,x_2):=\varphi\left(\frac{x_1+n^2}{n},\frac{x_2}{n}\right)\rm{e}_3,$ where $\varphi=\Delta w$ and $w\in C^\infty_c(\R^2_-,\R)$.

			This completes the proof of Theorem \ref{T:main2D_Om0}.

			\section{Implications for the Time Dependent Maxwell Equations}
			\label{S:t-dep}

			In this section we explain how at some of the spectral points one can generate solutions of the time dependent Maxwell equations.

			Maxwell's equations \eqref{E:Maxw-1stord} (with $\boldsymbol{\mu}_0=1$) derive from the time dependent problem
				\begin{equation}\label{E:Maxw-tdep}
					\begin{aligned}
						\nabla\times \cE & = - \pa_t \cH, \\
						\nabla\times \cH & = \pa_t \cD, \quad \cD =\perm_0 \left(\cE+\int_\R \chi(t-s)\cE(s)\dd s\right),\\
						\nabla \cdot \cD & =0, \quad \nabla \cdot \cH =0
					\end{aligned}
				\end{equation}
				by substituting the ansatz
				\beq\label{E:EH-ansatz}
				(\mathcal E, \mathcal H)(x,t)=(E,H)(x)e^{-\ri\omega t}+(\overline{E},\overline{H})(x)e^{\ri\bar{\omega} t}, \omega \in\C.
				\eeq
				The function $\hat{\chi}$ in \eqref{E:Maxw-1stord} is the temporal Fourier transform of the electric susceptibility $\chi$, i.e. (suppressing the $x$-dependence)
				\beq\label{E:tFT}
				\hat{\chi}(\omega)=\int_\R \chi(t)e^{\ri\omega t}\dd t.
				\eeq
				Equations \eqref{E:Maxw-1stord} collect all terms proportional to $e^{-\ri\omega t}$. The complex conjugated equations \eqref{E:Maxw-1stord} then collect terms proportional to $e^{\ri\bar{\omega}t}$.

			Equations \eqref{E:Maxw-tdep} describe a delayed response of the medium to the applied electromagnetic field, so called material dispersion.
				In order to satisfy causality, $\chi(t)$ must vanish for all $t<0$, i.e., the displacement field $\cD$ must not depend on future values of $\cE$. Hence, in this section, we restrict to causal electric susceptibilities $\chi$.

			If $\hat{\chi}(\omega)$ is well defined, i.e. the integral in \eqref{E:tFT} converges, and if $\omega\neq 0$, then any solution $E$ of \eqref{E:Maxw-2ndord} together with $H:=\frac{1}{\ri\omega}\nabla \times E$ generates a solution of \eqref{E:Maxw-tdep} on the whole time axis via the ansatz in \eqref{E:EH-ansatz}. For $\omega=0$ the resulting fields are constant in time. Then $H$ is a gradient of a harmonic function because $\curl H =0$ and $\div H =0$. Hence also $H$ is harmonic in $\R^n$ ($n=1$ or $n=2$). If we restrict ourselves to bounded solutions, we obtain $H=\text{const}$.

			Clearly, if $\chi \in L^1_\rho(\R_+):=\{f:L^1_\text{loc}(\R_+,\R): \int_0^\infty |f(t)|e^{\rho t}\dd t <\infty\}$, then  $\hat{\chi}(\omega)$ is well defined for all $\omega \in \C$ with $\Im(\omega)\geq -\rho$. Hence, the following is a necessary condition for a solution $E$  \eqref{E:Maxw-2ndord} corresponding to a spectral point $\omega\in \C$ to produce a solution of \eqref{E:Maxw-tdep}:
				\beq\label{E:im-om-cond}
				\Im(\omega)> -\sup\{\rho\in \R: \chi\in L^1_\rho(\R_+)\}.
				\eeq

			Note that for $\omega \in \R$ one can, of course, generalize the definition of the Fourier transform also for tempered distributions $\chi$, like, e.g., the Drude susceptibility $\chi^D$ below. Then equation \eqref{E:Maxw-tdep} would have to be interpreted in an appropriate distributional sense.

			Let us now revisit the three examples of $\hat{\chi}$ considered in the paper. For (non-dispersive) dielectrics we have used $\hat{\chi}\equiv \eta>0$. This corresponds to the instantaneous response, i.e. formally $\chi(t)=\eta\delta(t)$. In a non-dispersive dielectric the displacement field in \eqref{E:Maxw-tdep} is given simply by $\cD=\perm_0(1+\eta)\cE$.

			For metals we have studied two examples: the Drude model \eqref{E:Drude} and the Lorentz model \eqref{E:Lorentz}.

			\paragraph{The Drude Model.}
				The time dependent susceptibility of the Drude model in \eqref{E:Drude} is
				$$\chi^D(t):=\frac{c_D}{\gamma}(1-e^{-\gamma t})\theta(t),$$
				where $\theta$ is the Heaviside function, see \cite{PBK2011}. Due to the term $\tfrac{c_D}{\gamma}\theta(t)$ we have $\chi^D\in L^1_\rho(\R_+)$ for any $\rho<0$ but not for $\rho=0$. For the Drude model we conclude that functions \eqref{E:EH-ansatz} with $\Im(\omega)\leq 0$ cannot be solutions of \eqref{E:Maxw-tdep}!

			The spectrum for the interface of a dielectric with the Drude metal was studied in Example \ref{Ex:1D-Drude} in the one dimensional case and in Example \ref{Ex:2D-Drude} in two dimensions. For the Drude model none of the spectral points generates a solution of the time dependent problem because
				$$\sigma(\cL_k), \sigma(\cL)\subset \{\omega\in \C: \Im(\omega)\leq 0\},$$
				i.e. condition \eqref{E:im-om-cond} is not satisfied.

				\begin{remark}
					This apparent problem of the Drude model is not unknown in the physics community. It originates in the non-unique distinction between the current density $\mathcal{J}$ and the polarization field $\mathcal{P}$ in the macroscopic Maxwell equation
					$\nabla \times \cH = \pa_t \cD + \mathcal{J}$, \cite{Busch}. The constitutive relation is $\mathcal{D}=\perm_0 \cE + \cP$, hence $\mathcal{J}$ and $\pa_t \cP$ play the same role in the equation. In \eqref{E:Maxw-tdep} we have $\cP=\perm_0\int_\R\chi(t-s)\cE(s)\dd s$ and $\mathcal{J}=0$. If, instead, one chooses $\cP=0$ and defines $\mathcal{J}$ on the Fourier side as $-\ri\omega \hat{\chi}(\omega) E$ with the Drude model for $\hat{\chi}$, i.e.,
					$$J=J(E)(x,\omega) := \frac{c_D}{\gamma-\ri\omega}E(x),$$
					then $J(E)(x,\omega)e^{-\ri\omega t}=\int_\R c_D \theta(t-s)e^{-\gamma(t-s)}e^{-\ri\omega s}\dd s$, which converges for all $$\Im(\omega)>-\gamma.$$
					Understanding the first equation in \eqref{E:Maxw-2ndord} as $\nabla\times \nabla \times E -\omega^2 \perm_0 E - \ri\omega \perm_0 J(E)=0$, one obtains via \eqref{E:EH-ansatz} a solution of \eqref{E:Maxw-tdep} with the second line replaced by
					$$\nabla \times \cH = \pa_t \cD + \mathcal{J}, \ \cD =\perm_0 \cE, \ \mathcal{J} = \int_\R c_D \theta(t-s)e^{-\gamma(t-s)} \cE(s)\dd s.$$
					Unfortunately, due to the modified $\cD$ the divergence condition in \eqref{E:Maxw-2ndord} changes and the solutions $E$ computed above do not solve this new formulation of the Maxwell problem.
				\end{remark}

			\paragraph{The Lorentz Model.}
				The time dependent susceptibility of the Lorentz model in  \eqref{E:Lorentz} is
				$$\chi^L(t):=\frac{c_L}{\sqrt{\omega_*^2-\frac{\gamma^2}{4}}} e^{-\frac{\gamma}{2}t}\sin\left(\sqrt{\omega_*^2-\frac{\gamma^2}{4}}t\right)\theta(t),$$
				see \cite{PBK2011}. $\chi^L$ is defined (and real) if $\omega_*>\gamma/2$. Clearly, $\chi^L\in L^1_\rho(\R_+)$ if and only if $\rho<\gamma/2$.

			The spectrum for the interface of a dielectric and a Lorentz metal is plotted in Examples \ref{Ex:1D-Lorentz}  and \ref{Ex:2D-Drude}. One observes that the spectrum lies in the strip $\{\omega\in \C: -\tfrac{\gamma}{2} < \Im(\omega) \leq 0\},$ which is compatible with the condition \eqref{E:im-om-cond}.

			In the one dimensional case, if $\omega\in \sigma_p(\cL_k)$, then the corresponding eigenfunction $u\in D(\cL_k)$ produces an $L^\infty(\R,\cD_{k,\omega})$ solution of \eqref{E:Maxw-2ndord} via the ansatz $E(x) = u(x_1)e^{\ri kx_2}$ and, as described above, if $\omega\neq 0$, then also a solution of the time dependent equations \eqref{E:Maxw-tdep} via the ansatz \eqref{E:EH-ansatz}. If $\omega\in \sigma_{\text{Weyl }}(\cL_k)$, then no solution of \eqref{E:Maxw-2ndord} is available. One merely has a Weyl sequence based on truncated plane waves on either side of the interface. Because of the truncation these are not true solutions. Without the truncation these functions do not satisfy the interface condition.

			It is expected that the Weyl spectrum in $M^{(k)}_+\cup M^{(k)}_-$ describes radiation waves travelling to $x_1\to\pm \infty$ in a time dependent scattering problem \cite{HL2007}.

			It is an open problem to describe the significance of the five different kinds of essential spectrum for the time evolution.

			In two dimensions, besides $\sigma_p$ also a subset of the Weyl spectrum, namely the set $N$, generates time dependent solutions. This is because a Weyl sequence at $\omega\in N$ is a product of an eigenfunction of the one dimensional problem in $x_1$ and a truncated plane wave in $x_2$. Dropping the truncation, we obtain a bounded solution $E(x_1,x_2)=\psi(x_1)e^{\ri k_0 x_2}$ of \eqref{E:Maxw-2ndord}, where $\psi$ is an eigenfunction of $\cL_{k_0}$ in one dimension, see the proof of Lemma \ref{E:NsubWeyl}.

			Also here the Weyl spectrum in $M_+\cup M_-$ is expected to describe radiation travelling to $x_1\to\pm \infty$.

			\section{Conclusion}\label{S:concl}

			This paper is an early contribution to the study of the spectrum generated by non-selfadjoint operator pencils associated with interface problems for Maxwell equations with the temporal frequency $\omega$ being the spectral parameter. We make no assumptions on the way the dielectric function depends on the frequency, which means that our results can be applied to very general situations where two materials meet at a flat interface. On the other hand, our results here only consider very specific reductions of the problem to one- and two-dimensional situations. Therefore, the results could be extended to much more general situations: the full three-dimensional problem, inhomogeneous media in the half-spaces and even a non-flat interface. In upcoming work we intend to address some of these questions.

			One of the contributions of the current paper is the detailed look at the different types of essential spectrum that arise in this model. It is an interesting question as to how the different types of essential spectrum affect the time-dependent problem or whether these distinctions are purely of mathematical interest. To the best of our knowledge, this  has not been looked at in the non-selfadjoint situation.

			\appendix
			\section{Integration by Parts for $\nabla_k\times$ and $\nabla\times $}

			This and the two following appendices include some classical result, which we provided for convenience.

			We provide integration by parts formulas for $L^2$ functions $u$ and $v$ with their curl lying also in $L^2$. These are used in Lemma \ref{L:HS-1D} and \ref{L:HS-2D} to justify the formulas
			$$\int_\R (\nabla_k \times \nabla_k \times u)\cdot \overline{u} \dd x = \int_\R |\nabla_k \times u|^2 \dd x \quad \forall u\in \cD_{k,\omega}$$
			and
			$$\int_{\R^2} (\nabla \times \nabla \times u)\cdot \overline{u} \dd x = \int_{\R^2} |\nabla \times u|^2 \dd x \quad \forall u \in \cD_{\omega}$$
			respectively.

			\blem\label{L:PI-1D}
			Let $u,v\in L^2(\R,\C^3)$ satisfy $\nabla_k\times u,\nabla_k\times v \in L^2(\R,\C^3)$. Then
			$$\int_\R (\nabla_k \times u)\cdot \overline{v} \dd x = \int_\R u\cdot \overline{(\nabla_k \times v)} \dd x.$$
			\elem
			\bpf
			First note that due to the form
			$$\nabla_k\times \varphi = (\ri k\varphi_3, -\varphi'_3, \varphi'_2-\ri k\varphi_1)^T$$
			we have that $\varphi, \nabla_k\times \varphi\in L^2(\R,\C^3)$ is equivalent to $\varphi_1\in L^2(\R),\varphi_2, \varphi_3\in H^1(\R)$. For $u$ and $v$ we thus have $u_2,u_3, v_2, v_3\in H^1(\R)$. The calculation
			$$
			\begin{aligned}
				\int_\R (\nabla_k \times u)\cdot \overline{v} \dd x &= \int_\R\ri k u_3\overline{v_1} - u_3'\overline{v_2}+ (u_2'-\ri k u_1)\overline{v_3} \dd x\\
				&=\int_\R u_1\overline{\ri k  v_3} - u_2\overline{v_3'}+ u_3 \overline{(v_2'-\ri k v_1)} \dd x =  \int_\R u\cdot \overline{(\nabla_k \times v)} \dd x
			\end{aligned}
			$$
			clearly holds for $v_2,v_3 \in C^\infty_c(\R)$ and thus by a density argument also in our case.
			\epf
			\blem\label{L:PI-2D}
			Let $u,v\in L^2(\R^2,\C^3)$ satisfy $\nabla\times u,\nabla\times v \in L^2(\R^2,\C^3)$. Then
			$$\int_{\R^2} (\nabla \times u)\cdot \overline{v} \dd x = \int_{\R^2} u\cdot (\nabla \times \overline{v}) \dd x.$$
			\elem
			\bpf
			First, we show that if $\varphi_1,\varphi_2, \pa_{x_1}\varphi_2-\pa_{x_2}\varphi_1 \in L^2(\R^2)$, then
			\beq\label{E:PI-1}
			\int_{\R^2} (\pa_{x_1}\varphi_2-\pa_{x_2}\varphi_1)\psi\dd x  = \int_{\R^2} \varphi_1\pa_{x_2}\psi-\varphi_2\pa_{x_1}\psi\dd x \quad \forall \psi\in H^1(\R^2).
			\eeq
			For $\psi\in C^\infty_c(\R^2)$ this holds since
			$$\int_{\R^2} (\pa_{x_1}\varphi_2-\pa_{x_2}\varphi_1)\psi\dd x  = \pa_{x_1}\varphi_2 [\psi]-\pa_{x_2}\varphi_1[\psi] = \varphi_1[\pa_{x_2}\psi]-\varphi_2[\pa_{x_1}\psi] = \int_{\R^2} \varphi_1\pa_{x_2}\psi-\varphi_2\pa_{x_1}\psi\dd x,$$
			where, e.g., $\pa_{x_1}\varphi_2 [\psi]$ is the distributional action of $\pa_{x_1}\varphi_2$ on $\psi$, which equals  $-\varphi_2[\pa_{x_1}\psi]$ by the definition of the distributional derivative. The final step holds since $\varphi_1,\varphi_2\in L^2(\R^2)$. To obtain \eqref{E:PI-1} recall that $C^\infty_c(\R^2)$ is dense in $H^1(\R^2)$.

			For the statement of the lemma note that
			$$\nabla\times u = (\pa_{x_2}u_3, -\pa_{x_1}u_3, \pa_{x_1}u_2-\pa_{x_2}u_1)^T$$
			and hence $u,v, \nabla\times u, \nabla\times v\in L^2(\R^2,\C^3)$ is equivalent to $u_1,u_2,v_1,v_2, \pa_{x_1}u_2-\pa_{x_2}u_1, \pa_{x_1}v_2-\pa_{x_2}v_1 \in L^2(\R^2)$ and $u_3, v_3\in H^1(\R^2)$.
			Hence, we can use \eqref{E:PI-1} for both $(\varphi,\psi)=(u,\overline{v_3})$ and $(\varphi,\psi)=(\overline{v},u_3)$. As a result
			$$
			\begin{aligned}
				&\int_{\R^2}(\nabla \times u)\cdot \overline{v} \dd x =  \int_{\R^2} \overline{v_1} \pa_{x_2}u_3 - \overline{v_2}\pa_{x_1}u_3 \dd x  + \int_{\R^2} (\pa_{x_1}u_2 - \pa_{x_2}u_1) \overline{v_3} \dd x \\
				&=\int_{\R^2} u_3(\pa_{x_1}\overline{v_2} - \pa_{x_2}\overline{v_1})\dd x + \int_{\R^2} u_1\pa_{x_2}\overline{v_3} - u_2\pa_{x_1} \overline{v_3} \dd x  = \int_{\R^2} u\cdot(\nabla\times \overline{v})\dd x.
			\end{aligned}
			$$
			\epf

			\section{Interface Conditions in 1D} \label{App:interf-1D}
			In Appendix \ref{App:interf-1D} we prove the equivalence of the representation of $\cD_{k,\omega}$ as a space of functions with a weak curl and weak curl-curl over $\R$ and as a space with such weak derivatives over $\R_+$ and $\R_-$ equipped with interface conditions.

			For the one-dimensional case we first provide a natural definition of the distributional and the weak divergence and the curl in $L^2$ for our non-standard gradient $\nabla_k=(\pa_{x_1}, \ri k ,0)^T$.

			Let $\Omega\subset \R$ be open. For $u\in L^2(\Omega,\C^3)$ we define the distributional $\nabla_k\cdot u$ by
			$$(\nabla_k\cdot u) (\varphi) =  -\int_\Omega u\cdot \nabla_{-k} \varphi \dd x \qquad \forall \varphi \in C^\infty_c(\Omega,\C).$$
			The divergence $\nabla_k\cdot u$ is called weak if $\nabla_k\cdot u \in L^2(\Omega,\C)$ (i.e. $(\nabla_k\cdot u) (\varphi)=\int_\Omega w \varphi  \dd x \  \forall \varphi \in C^\infty_c(\Omega,\C)$ for some $w=:\nabla_k\cdot u \in L^2(\Omega,\C)$).\\
			The distributional $\nabla_k\times u$ and $\nabla_k\times \nabla_k\times u$ are defined by
			$$(\nabla_k\times u)(\varphi)=  \int_\Omega u\cdot \nabla_{-k} \times \varphi \dd x \qquad \forall \varphi \in C^\infty_c(\Omega,\C^3)$$
			and
			$$(\nabla_k\times \nabla_k\times u)(\varphi)=  \int_\Omega u\cdot \nabla_{k} \times \nabla_k\times \varphi \dd x \qquad \forall \varphi \in C^\infty_c(\Omega,\C^3)$$
			respectively.
			The curl $\nabla_k\times u$ and the curl-curl $\nabla_k\times \nabla_k\times u$ are called weak if $\nabla_k\times u\in L^2(\Omega,\C^3)$ and $\nabla_k\times \nabla_k\times u\in L^2(\Omega,\C^3)$ resp. (i.e. $(\nabla_k\times u)(\varphi)=\int_\Omega w \cdot \varphi \dd x \ \forall \varphi\in C^\infty_c(\Omega,\C^3)$ for some $w=:\nabla_k\times u \in L^2(\Omega,\C^3)$ and $(\nabla_k\times \nabla_k\times u)(\varphi)=\int_\Omega w \cdot \varphi \dd x \ \forall \varphi\in C^\infty_c(\Omega,\C^3)$ for some $w=:\nabla_k\times \nabla_k\times u \in L^2(\Omega,\C^3)$ resp.).

			\blem\label{lem:1Dinterface}
			We have
			\begin{align}\label{E:dkomega}
				\cD_{k,\omega}=&\{u\in L^2(\R,\C^3): \ \nabla_k\times u_\pm, \nabla_k\times \nabla_k\times u_\pm \in L^2(\R_\pm,\C^3), \\ \nonumber
				&\quad \text{the divergence condition} \ \eqref{E:div-cond-1d}, \ \text{and the interface conditions} \ \eqref{E:IFC-1D} \ \text{hold}\}.
			\end{align}
			\elem

			\bpf
			We first assume $u\in \cD_{k,\omega}$. Clearly, $u_\pm \in L^2(\R_\pm,\C^3)$ and $\nabla_k\times u_\pm, \nabla_k\times \nabla_k\times u_\pm \in L^2(\R_\pm,\C^3).$

			Because $\nabla_k\times u=(\ri k u_3, -u_3', u_2'-\ri k u_1)^T \in L^2(\R,\C^3)$, we automatically have $u_2,u_3\in H^1(\R,\C)$. The distributional identity $\nabla_k\cdot (\perm u) = 0$ is equivalent to
			$$\int_\R\perm u_1\varphi'\dd x=\ri k \int_\R\perm u_2\varphi\dd x \qquad \forall \varphi \in C^\infty_c(\R,\C)$$
			and hence $-\ri k \perm u_2 \in L^2(\R,\C)$ is the weak derivative of $\perm u_1$. As $\perm \in L^\infty(\R,\C)$, we get also $\perm u_1\in L^2(\R,\C)$, concluding that $\perm u_1\in H^1(\R,\C)$. Hence, the continuity of $\perm u_1, u_2$, and $u_3$ follows from Sobolev's embedding theorem.
			Next, since  $\nabla_k\times \nabla_k\times u=(k^2u_1 + \ri k u_2', \ri k u_1'-u_2'', k^2u_3-u_3'')^T \in L^2(\R,\C^3)$, it follows that $u_3', \ri ku_1 -u_2' \in H^1(\R,\C)$. Thus also the continuity of $u_3'$ and $u_2'-\ri k u_1$ follows.

			The equations $\perm_\pm(\omega) \nabla_k\cdot u_\pm =0 \text{ on } \R_\pm$ follow from $\int_\R  \perm u \cdot \nabla_{-k} \varphi\dd x =0$ for all $\varphi\in C^\infty_c(\R,\C)$ by the choice of test functions in $C^\infty_c(\R_\pm,\C)$.

			For the opposite direction we assume $u$ lies in the set on the right hand side of \eqref{E:dkomega}. Analogously to the first part one shows that $\perm u_{\pm,1}, u_{\pm,2},  u_{\pm,3}, u_{\pm,3}',$ $\ri ku_{\pm,1} -u_{\pm,2}'\in H^1(\R_\pm,\C)$.
			By Sobolev's embedding theorem, see Theorem 4.12 in \cite{AF2003}, any $f\in H^1(\R_+)$ is continuous on $\overline{\R_+}=[0,\infty)$ and analogously for $f\in H^1(\R_-)$. Hence, the jumps in \eqref{E:IFC-1D} are well defined in the sense of \eqref{E:jump}.


			Next, we show that $w:=\chi_{\R_+}\nabla_k\times u_+ + \chi_{\R_-}\nabla_k\times u_-$  is the weak $\nabla_k \times$ of $u=\chi_{\R_+}u_+ + \chi_{\R_-}u_-$ on $\R$. This follows by integration by parts using $\llbracket u_3\rrbracket=\llbracket u_2\rrbracket=0 $. Indeed, for any $\Phi \in C^\infty_c(\R,\C^3)$
			$$
			\begin{aligned}
				\int_\R w\cdot \Phi\dd x &= \sum_{\pm}\int_{\R_\pm} \ri k u_{\pm,3}  \Phi_1 - u_{\pm,3}' \Phi_2+(u_{\pm,2}'-\ri k u_{\pm,1}) \Phi_3\dd x\\
				&= \int_\R (-u_1\ri k \Phi_3 - u_2\Phi_3') + u_3 (\Phi_2' +\ri k \Phi_1)\dd x + \llbracket u_3\rrbracket\Phi_2(0)-\llbracket u_2\rrbracket\Phi_3(0)\\
				&= \int_\R u\cdot \nabla_{-k}\times \Phi\dd x.
			\end{aligned}
			$$
			Analogously, using $\llbracket u_2\rrbracket=\llbracket u_3\rrbracket=\llbracket u_3'\rrbracket=\llbracket u_2'-\ri k u_1\rrbracket = 0$, one shows that $w:=\chi_{\R_+}\nabla_k\times\nabla_k\times u_+ + \chi_{\R_-}\nabla_k\times\nabla_k\times u_-$  is the weak $\nabla_k\times\nabla_k \times$ of $u$.

			It remains to prove the distributional equality $\nabla_k\cdot (\perm u)=0.$ Again, via the integration by parts, we have for any $\varphi \in C^\infty_c(\R,\C)$
			$$
			\begin{aligned}
				\int_\R (\perm u)\cdot \nabla_{-k} \varphi\dd x &= \sum_{\pm}\int_{\R_\pm} \perm_\pm u_\pm \cdot \nabla_{-k} \varphi \dd x\\
				&= - \sum_{\pm}\int_{\R_\pm} \perm_\pm \nabla_k \cdot u_\pm \varphi \dd x - \varphi(0)\llbracket \perm u_1\rrbracket\\
				&=0
			\end{aligned}
			$$
			because $\llbracket \perm u_1\rrbracket =0$ and $\perm_\pm \nabla_k\cdot u_\pm =0$.
			\epf


			\section{Interface Conditions in 2D}\label{App:traces2D}
			In this appendix  we include some standard analysis of traces in $\Hdiv$ and $\Hcurl$ for convenience of the readers and explain  that  $\cD_{\omega}$ can be characterised as stated in \eqref{E:domega} using interface conditions, see Lemma \ref{L:Domega-equiv}.
			This follows primarily from smoothness properties of the elements of the spaces $\Hdiv$ and $\Hcurl$ defined for any $\Omega\subset \R^2$ as
			$$\Hdiv(\Omega):=\{u \in L^2(\Omega,\C^3): \nabla \cdot u \in L^2(\Omega,\C)\}, \quad \Hcurl(\Omega):=\{u \in L^2(\Omega,\C^3): \nabla \times u \in L^2(\Omega,\C^3)\},$$
			where $\nabla:=(\pa_{x_1},\pa_{x_2},0)^T$.

			Let $\Gamma := \{x\in \R^2:x_1=0\}$ and $\nu:= \rm{e}_1$ i.e. the unit normal on $\Gamma$ pointing outward from $\R^2_-$.

			For the analysis of the traces in $\Hdiv$ and $\Hcurl$ we use the fact that for any $n\in \N$ the trace operator
			$$T_0:H^1(\R^2,\C^n)\to H^{1/2}(\Gamma,\C^n) \quad (\text{such that  } T_0f=f|_\Gamma \text{ for } f\in C^1(\R^2,\C^n)) $$
			is continuous and surjective, see Thm. 7.39 and Sec. 7.67 in \cite{AF2003}.

			For $u\in \Hdiv(\R^2_-)$ we define a trace of the normal component via
			\beq\label{E:Tr-normal-H}
			T^n_-: \Hdiv(\R^2_-) \to H^{-1/2}(\Gamma),  T^n_- u[\varphi]:= \int_{\R^2_-} u \cdot \nabla \tilde{\varphi} \dd x + \int_{\R^2_-} \tilde{\varphi} \nabla \cdot u  \dd x \qquad \forall \varphi \in H^{1/2}(\Gamma,\C),
			\eeq
			where $\tilde{\varphi} \in H^1(\R^2,\C)$ is such that $T_0 \tilde{\varphi}= \varphi$. We denote here the dual space of $H^{1/2}(\Gamma,\C)$ by $H^{-1/2}(\Gamma)$.

			Due to the surjectivity of $T_0:H^1(\R^2,\C)\to H^{1/2}(\Gamma,\C)$ we know that $\tilde{\varphi}$ exists for any $\varphi$. Next, we show that $T^n_- u[\varphi]$  is independent of the choice of $\tilde{\varphi}$ as long as $T_0 \tilde{\varphi}= \varphi$. Let $T_0 \tilde{\varphi}_1= T_0 \tilde{\varphi}_2=\varphi$ for some $\tilde{\varphi}_{1,2}\in H^1(\R^2,\C)$ and define
			$$\tau^{n,j}_-:= \int_{\R^2_-} u \cdot \nabla \tilde{\varphi}_j \dd x + \int_{\R^2_-} \tilde{\varphi}_j \nabla \cdot u  \dd x, \quad j=1,2.$$
			Indeed, because $\tilde{\varphi}_1-\tilde{\varphi}_2\in H^1_0(\R^2_-,\C)$, we get
			$$\tau^{n,1}_- - \tau^{n,2}_-= \int_{\R^2_-} u \cdot \nabla (\tilde{\varphi}_1-\tilde{\varphi}_2) \dd x + \int_{\R^2_-} (\tilde{\varphi}_1-\tilde{\varphi}_2) \nabla \cdot u  \dd x=0$$
			by the definition of the weak divergence.

			For $u\in C^1(\overline{\R^2_-},\C^3)$ we have $T^n_- u=\nu\cdot u|_\Gamma$ because the divergence theorem implies
			$$ \int_{\R^2_-} u \cdot \nabla \Psi \dd x + \int_{\R^2_-} \Psi\nabla \cdot u  \dd x  = \int_\Gamma \Psi u \cdot \nu \dd S(x) \qquad \forall \Psi \in C^\infty_c(\R^2,\C).$$
			Analogously we define
			\beq\label{E:Tr-normal+H}
			T^n_+: \Hdiv(\R^2_+) \to  H^{-1/2}(\Gamma),  T^n_+ u[\varphi]:= -\int_{\R^2_+} u \cdot \nabla \tilde{\varphi} \dd x - \int_{\R^2_+} \tilde{\varphi} \nabla \cdot u  \dd x \qquad \forall \varphi \in H^{1/2}(\Gamma,\C),
			\eeq
			where $\tilde{\varphi} \in H^1(\R^2,\C)$ is such that $T_0 \tilde{\varphi}= \varphi$. For $u\in C^1(\overline{\R^2_+},\C^3)$ we have again $T^n_+ u=\nu\cdot u|_\Gamma$. Note that the unit outward normal to $\R^2_+$ is $-\nu$.

			For $u\in\Hcurl(\R^2_-)$ there is a trace of the tangential component of $u$:
			\beq\label{E:Tr-tang-H}
			T^t_-: \Hcurl(\R^2_-) \to H^{1/2}(\Gamma,\C^3)',  T^t_- u[\Phi]:= \int_{\R^2_-} u \cdot \nabla \times \tilde{\Phi} \dd x - \int_{\R^2_-} \tilde{\Phi}\cdot \nabla \times u  \dd x \qquad \forall \Phi \in  H^{1/2}(\Gamma,\C^3),
			\eeq
			where $\tilde{\Phi} \in H^1(\R^2,\C^3)$ is such that $T_0 \tilde{\Phi} = \Phi$.  We denote here the dual space of $H^{1/2}(\Gamma,\C^3)$ by $H^{1/2}(\Gamma,\C^3)'$. Note that by similar arguments as for $T^n_-$ one can show that $T^t_- u[\Phi]$ is independent of the choice of $\tilde\Phi$ as long as $T_0 \tilde{\Phi} = \Phi$.

			For $u\in C^1(\overline{\R^2_-},\C^3)$ we have $T^t_- u=\nu\times u|_\Gamma$ because integration by parts implies
			$$ \int_{\R^2_-} \Psi\cdot \nabla \times u  \dd x -\int_{\R^2_-} u \cdot \nabla \times \Psi \dd x  = \int_\Gamma \Psi \cdot  (\nu \times u) \dd S(x) \qquad \forall \Psi \in C^\infty_c(\R^2,\C^3).$$

			Analogously, we set
			\beq\label{E:Tr-tang+H}
			T^t_+: \Hcurl(\R^2_+) \to  H^{1/2}(\Gamma,\C^3)',  T^t_+ u[\Phi]:= -\int_{\R^2_+} u \cdot \nabla \times \tilde{\Phi} \dd x + \int_{\R^n_+} \tilde{\Phi} \cdot \nabla \times u  \dd x \qquad \forall \Phi \in H^{1/2}(\Gamma,\C^3),
			\eeq
			where $\tilde{\Phi} \in  H^1(\R^2,\C^3)$ is such that $T_0 \tilde{\Phi} = \Phi$. For $u\in C^1(\overline{\R^2_+},\C^3)$ we have $T^t_+ u=\nu\times u|_\Gamma$.

			\blem\label{L:Hdiv-interf}
			\begin{itemize}
				\item[(i)] If $u\in \Hdiv(\R^2)$, then  $u_+\in \Hdiv(\R^2_+), u_-\in \Hdiv(\R^2_-)$, $T^n_- u_- = T^n_+ u_+$, where $u_\pm :=u |_{\R^2_\pm}$.
				\item[(ii)] If $u_+\in \Hdiv(\R^2_+), u_-\in \Hdiv(\R^2_-), T^n_- u_- = T^n_+ u_+$, then $$u(x):=\begin{cases}
					u_-(x) , x\in \R^2_-,\\
					u_+(x) , x\in \R^2_+\\
				\end{cases}
				$$
				satisfies $u\in \Hdiv(\R^2)$.
			\end{itemize}
			\elem
			\bpf
			To show (i), note that $T^n_-u_-$ and $T^n_+u_+$ are well defined for $u\in \Hdiv(\R^2)$ (since clearly $u_\pm\in \Hdiv(\R^2_\pm)$). Subtracting the equations in \eqref{E:Tr-normal-H} and \eqref{E:Tr-normal+H}, we get
			$$T^n_- u_-[\varphi] -T^n_+ u_+[\varphi] = \int_{\R^2} u \cdot \nabla \tilde{\varphi} \dd x + \int_{\R^2} \tilde{\varphi} \nabla \cdot u \dd x  \qquad \forall \varphi \in H^{1/2}(\Gamma,\C),$$
			where $\tilde{\varphi} \in H^1(\R^2,\C)$ is such that $T_0 \tilde{\varphi} = \varphi$. The right hand side vanishes by the definition of the weak divergence.

			For (ii) we define
			$$
			w(x):=\begin{cases}\nabla \cdot u_-(x), & x\in \R^2_-,\\
				\nabla \cdot u_+(x), & x\in \R^2_+.
			\end{cases}
			$$
			We have $w\in L^2(\R^2,\C)$ and need to show that $w=\nabla \cdot u$. This follows since for any $\psi \in C^\infty_c(\R^2,\C)$ we have
			$$
			\begin{aligned}
				\int_{\R^2}w \psi \dd x &= \int_{\R^2_-} \psi\nabla \cdot u_- \dd x+ \int_{\R^2_+} \psi\nabla\cdot u_+ \dd x = -\int_{\R^2_-}u_- \cdot \nabla \psi \dd x -\int_{\R^2_+}u_+ \cdot \nabla \psi \dd x + T^n_-u_-[\psi|_\Gamma] - T^n_+u_+[\psi|_\Gamma] \\
				&=-\int_{\R^2} u \cdot \nabla \psi \dd x,
			\end{aligned}
			$$
			where we have used \eqref{E:Tr-normal-H}, \eqref{E:Tr-normal+H}, and the trace assumption in (ii).
			\epf

			\blem\label{L:Hcurl-interf}
			\begin{itemize}
				\item[(i)] If $u\in \Hcurl(\R^2)$, then  $u_+\in \Hcurl(\R^2_+), u_-\in \Hcurl(\R^2_-), T^t_- u_- = T^t_+ u_+$, where $u_\pm :=u |_{\R^2_\pm}$.
				\item[(ii)] If $u_+\in \Hcurl(\R^2_+), u_-\in \Hcurl(\R^2_-), T^t_- u_- = T^t_+ u_+$, then $$u(x):=\begin{cases}
					u_-(x) , x\in \R^2_-,\\
					u_+(x) , x\in \R^2_+\\
				\end{cases}
				$$
				satisfies $u\in \Hcurl(\R^2)$.
			\end{itemize}
			\elem
			\bpf
			To show (i), note that $T^t_-u_-$ and $T^t_+u_+$ are well defined for $u\in \Hcurl(\R^2)$ (since clearly $u_\pm\in \Hcurl(\R^2_\pm)$). Subtracting the equations in \eqref{E:Tr-tang-H} and \eqref{E:Tr-tang+H}, we get
			$$T^t_- u_-[\Phi] -T^t_+ u_+[\Phi] = \int_{\R^2} u \cdot \nabla \times \tilde{\Phi} \dd x - \int_{\R^2} \tilde{\Phi} \cdot \nabla \times u \dd x  \qquad \forall \Phi \in H^{1/2}(\Gamma,\C^3),$$
			where $\tilde{\Phi} \in H^1(\R^2,\C^3)$ is such that $T_0 \tilde{\Phi} = \Phi$. The right hand side vanishes by the definition of the weak curl.

			For (ii) we define
			$$
			w(x):=\begin{cases}\nabla \times u_-(x), & x\in \R^2_-,\\
				\nabla \times u_+(x), & x\in \R^2_+.
			\end{cases}
			$$
			We have $w\in L^2(\R^2,\R^3)$ and need to show that $w=\nabla \times u$. This follows since for any $\Psi \in C^\infty_c(\R^2,\R^3)$ we have
			$$
			\begin{aligned}
				\int_{\R^2}w \cdot \Psi \dd x &= \int_{\R^2_-}\Psi \cdot \nabla \times u_- \dd x+ \int_{\R^2_+}\Psi \cdot \nabla \times u_+ \dd x \\
				&= \int_{\R^2_-}u_- \cdot \nabla\times  \Psi \dd x +\int_{\R^2_+}u_+ \cdot \nabla \times \Psi \dd x - T^t_-u_-[\Psi|_\Gamma] + T^t_+u_+[\Psi|_\Gamma] \\
				&=\int_{\R^2} u \cdot \nabla \times \Psi \dd x,
			\end{aligned}
			$$
			where we have used \eqref{E:Tr-tang-H}, \eqref{E:Tr-tang+H}, and the trace assumption in (ii).
			\epf
			Finally, we use Lemma \ref{L:Hdiv-interf} and \ref{L:Hcurl-interf} to prove the desired equivalence in representing $\cD_\omega$.
			\blem\label{L:Domega-equiv}
			We have \begin{align}\label{E:domega2}
				\cD_\omega=\{&E\in L^2(\R^2,\C^3): \ \text{the } L^2\text{-conditions} \ \eqref{eq4}, \eqref{eq6},	\text{the divergence condition} \ \eqref{E:div-cond-2D},\\ \nonumber
				&\text{ and the interface conditions } \eqref{E:IFC} \text{ hold}\}.
			\end{align}
			\elem

			\bpf
			Let $u\in \cD_\omega$. Because $\perm u \in \Hdiv(\R^2)$ and $u,\nabla\times u \in \Hcurl(\R^2)$, Lemmas \ref{L:Hdiv-interf} and \ref{L:Hcurl-interf} (part (i)) imply the interface conditions \eqref{E:IFC}. The conditions  $u_\pm, \nabla\times u_\pm, \nabla\times \nabla\times u_\pm \in L^2(\R^2,\C^3)$  obviously hold and $\perm_\pm\nabla\cdot u_\pm=0$ can be obtained from the assumption $\int_{\R^2} \perm u\cdot \nabla \overline{\varphi}\dd x =0$ for all $\varphi \in C^\infty_c(\R^2,\C)$ by the choice $\varphi \in C^\infty_c(\R^2_\pm,\C)$.

			Vice versa, let $u$ lie in the set on the right hand side of \eqref{E:domega2}. The distributional divergence condition $\nabla\cdot (\perm u)=0$ follows from
			$$
			\begin{aligned}
				\int_{\R^2} \perm u\cdot \nabla \varphi \dd x &=  \sum_\pm\int_{\R^2_\pm} \perm u_\pm\cdot \nabla \varphi \dd x \\
				&= -\sum_\pm\int_{\R^2_\pm} \nabla\cdot (\perm u_\pm) \nabla \varphi \dd x + T^n_-u[\varphi|_\Gamma]-  T^n_+u[\varphi|_\Gamma]\\
				&=0 \quad \forall \varphi \in C^\infty_c(\R^2,\C).
			\end{aligned}
			$$
			Finally, $u\in L^2(\R^2,\C^3)$ is obvious and $\nabla\times u, \nabla\times \nabla\times u \in L^2(\R^2,\C^3)$ follows from  Lemmas \ref{L:Hdiv-interf} and \ref{L:Hcurl-interf} (part (ii)) applied to $u$ and $\nabla \times u$.
			\epf

			\section*{Acknowledgements}
			The authors would like to thank Guido Burkard (Konstanz), Maxence Cassier (Fresnel Institute, Marseille), Kirill Cherednichenko and Alexander Kiselev (both Bath University), David Edmunds (Sussex), Christophe Hazard and Patrick Joly (both ENSTA Paris), Carsten Rockstuhl (KIT, Karlsruhe), Marco Marletta, Karl Michael Schmidt (both Cardiff University) and in particular Kurt Busch (HU Berlin) for helpful discussions. Last but not least, we are profoundly grateful to the anonymous referee whose insightful comments and questions helped improve the paper considerably.

			\bibliographystyle{plain}
			\bibliography{bibliography}

\begin{thebibliography}{10}

\bibitem{AF2003}
R.~A. Adams and J.~J.~F. Fournier.
\newblock {\em Sobolev Spaces}.
\newblock ISSN. Elsevier Science, 2003.

\bibitem{ABMW19}
G.~S. Alberti, M.~Brown, M.~Marletta, and I.~Wood.
\newblock Essential spectrum for {M}axwell's equations.
\newblock {\em Ann. Henri Poincar\'{e}}, 20(5):1471--1499, 2019.

\bibitem{ARYZ16}
H.~Ammari, M.~Ruiz, S.~Yu, and H.~Zhang.
\newblock {Mathematical analysis of plasmonic resonances for nanoparticles: The
  full Maxwell equations}.
\newblock {\em Journal of Differential Equations}, 261(6):3615--3669, 2016.

\bibitem{ACL2018}
F.~Assous, P.~Ciarlet, and S.~Labrunie.
\newblock {\em Mathematical Foundations of Computational Electromagnetism}.
\newblock Applied Mathematical Sciences. Springer, Cham, 2018.

\bibitem{Busch}
K.~Busch.
\newblock Humboldt University, Berlin, personal communication, 7.7.2024.

\bibitem{CHJ2017}
M.~Cassier, Ch. Hazard, and P.~Joly.
\newblock {Spectral theory for Maxwell's equations at the interface of a
  metamaterial. Part I: Generalized Fourier transform}.
\newblock {\em Communications in Partial Differential Equations},
  42(11):1707--1748, 2017.

\bibitem{CHJ2021}
M.~Cassier, Ch. Hazard, and P.~Joly.
\newblock {Spectral theory for Maxwell's equations at the interface of a
  metamaterial. Part II: Limiting absorption, limiting amplitude principles and
  interface resonance}.
\newblock {\em Communications in Partial Differential Equations},
  47(6):1217--1295, 2022.

\bibitem{CJK2017}
M.~Cassier, P.~Joly, and M.~Kachanovska.
\newblock {Mathematical models for dispersive electromagnetic waves: An
  overview}.
\newblock {\em Computers and Mathematics with Applications}, 74(11):2792--2830,
  2017.
\newblock Proceedings of the International Conference on Computational
  Mathematics and Inverse Problems, On occasion of the 60th birthday of Prof.
  Peter Monk.

\bibitem{CJM2023}
M.~Cassier, P.~Joly, and L.~A.~R. Mart{\'i}nez.
\newblock {Long-time behaviour of the solution of Maxwell's equations in
  dissipative generalized Lorentz materials (I): a frequency-dependent Lyapunov
  function approach}.
\newblock {\em Zeitschrift f{\"u}r angewandte Mathematik und Physik},
  74(3):115, May 2023.

\bibitem{DH2024}
T.~Dohnal and R.~He.
\newblock Bifurcation and asymptotics of cubically nonlinear transverse
  magnetic surface plasmon polaritons.
\newblock {\em Journal of Mathematical Analysis and Applications},
  538(2):128422, 2024.

\bibitem{DR2021}
T.~Dohnal and G.~Romani.
\newblock Eigenvalue bifurcation in doubly nonlinear problems with an
  application to surface plasmon polaritons.
\newblock {\em Nonlinear Differential Equations and Applications NoDEA},
  28(1):1--30, 2021.

\bibitem{DR2022}
T.~Dohnal and G.~Romani.
\newblock Correction to: Eigenvalue bifurcation in doubly nonlinear problems
  with an application to surface plasmon polaritons.
\newblock {\em Nonlinear Differential Equations and Applications NoDEA},
  30(1):9, Nov 2022.

\bibitem{EE18}
D.~E. Edmunds and W.~D. Evans.
\newblock {\em Spectral theory and differential operators}.
\newblock Oxford Mathematical Monographs. Oxford University Press, Oxford,
  2018.
\newblock Second edition of [ MR0929030].

\bibitem{Engstr21}
Ch. Engstr\"{o}m.
\newblock Spectra of {G}urtin-{P}ipkin type of integro-differential equations
  and applications to waves in graded viscoelastic structures.
\newblock {\em J. Math. Anal. Appl.}, 499(2):125063, 14, 2021.

\bibitem{ET18}
Ch. Engstr\"{o}m and A.~Torshage.
\newblock Spectral properties of conservative, dispersive, and absorptive
  photonic crystals.
\newblock {\em GAMM-Mitteilungen}, 41(3):e201800009, 2018.

\bibitem{feynman2}
R.~P. Feynman, R.~B. Leighton, and M.~Sands.
\newblock {\em The Feynman Lectures on Physics, Vol. II: Mainly
  Electromagnetism and Matter}.
\newblock Feynman Lectures on Physics. California Institute of Technology,
  1964.

\bibitem{Grieser14}
D.~Grieser.
\newblock The plasmonic eigenvalue problem.
\newblock {\em Rev. Math. Phys.}, 26(3):1450005, 26, 2014.

\bibitem{HL2007}
Ch. Hazard and F.~Loret.
\newblock Generalized eigenfunction expansions for conservative scattering
  problems with an application to water waves.
\newblock {\em Proceedings of the Royal Society of Edinburgh: Section A
  Mathematics}, 137(5):995--1035, 2007.

\bibitem{HP20}
Ch. Hazard and S.~Paolantoni.
\newblock Spectral analysis of polygonal cavities containing a negative-index
  material.
\newblock {\em Annales Henri Lebesgue}, 3:1161--1193, 2020.

\bibitem{HL07}
D.~Hundertmark and Y.-R. Lee.
\newblock Exponential decay of eigenfunctions and generalized eigenfunctions of
  a non-self-adjoint matrix {S}chr\"{o}dinger operator related to {NLS}.
\newblock {\em Bull. Lond. Math. Soc.}, 39(5):709--720, 2007.

\bibitem{Kato}
T.~Kato.
\newblock {\em Perturbation theory for linear operators}.
\newblock Classics in Mathematics. Springer-Verlag, Berlin, 1995.
\newblock Reprint of the 1980 ed.

\bibitem{Lassas98}
M.~Lassas.
\newblock The essential spectrum of the nonself-adjoint {M}axwell operator in a
  bounded domain.
\newblock {\em J. Math. Anal. Appl.}, 224(2):201--217, 1998.

\bibitem{Markus1988}
A.~S. Markus.
\newblock {\em Introduction to the spectral theory of polynomial operator
  pencils}, volume~71 of {\em Translations of Mathematical Monographs}.
\newblock American Mathematical Society, Providence, RI, 1988.
\newblock Translated from the Russian by H. H. McFaden, Translation edited by
  Ben Silver, With an appendix by M. V. Keldysh.

\bibitem{MT13}
M.~Marletta and Ch. Tretter.
\newblock Spectral bounds and basis results for non-self-adjoint pencils, with
  an application to {H}agen-{P}oiseuille flow.
\newblock {\em J. Funct. Anal.}, 264(9):2136--2176, 2013.

\bibitem{MoellerPivo}
M.~M{\"o}ller and V.~Pivovarchik.
\newblock {\em Spectral Theory of Operator Pencils, Hermite-Biehler Functions,
  and Their Applications}.
\newblock Operator theory : advances and applications. Springer International
  Publishing :Imprint: Birkh{\"a}user, 2015.

\bibitem{Pitarke_2007}
J.~M. Pitarke, V.~M. Silkin, E.~V. Chulkov, and P.~M. Echenique.
\newblock Theory of surface plasmons and surface-plasmon polaritons.
\newblock {\em Reports on Progress in Physics}, 70(1):1--87, 2007.

\bibitem{PBK2011}
L.~J. Prokopeva, J.~D. Borneman, and A.~V. Kildishev.
\newblock Optical dispersion models for time-domain modeling of
  metal-dielectric nanostructures.
\newblock {\em IEEE Transactions on Magnetics}, 47(5):1150--1153, 2011.

\bibitem{Shkaliko96}
A.~A. Shkalikov.
\newblock Operator pencils arising in elasticity and hydrodynamics: The
  instability index formula.
\newblock In I.~Gohberg, P.~Lancaster, and P.~N. Shivakumar, editors, {\em
  Recent Developments in Operator Theory and Its Applications}, pages 358--385,
  Basel, 1996. Birkh{\"a}user Basel.

\bibitem{ST96}
A.~A. Shkalikov and Ch. Tretter.
\newblock Spectral analysis for linear pencils {$N-\lambda P$} of ordinary
  differential operators.
\newblock {\em Math. Nachr.}, 179:275--305, 1996.

\bibitem{Zayats05}
A.~V. Zayats, I.~I. Smolyaninov, and A.~A. Maradudin.
\newblock Nano-optics of surface plasmon polaritons.
\newblock {\em Physics Reports}, 408(3):131--314, 2005.

\end{thebibliography}

			\noindent
			\textbf{Data availability statement:} Data sharing not applicable to this article as no datasets were generated or analysed during the current study.
		\end{document}